\newtheorem{thm}{Theorem}[section] 
\theoremstyle{definition} 
\newtheorem{dfn}{Definition}[section] 
\theoremstyle{axiom}
\theoremstyle{remark} 
\theoremstyle{plain} 
\newtheorem{prop}[thm]{Proposition}
\newtheorem{cor}[thm]{Corollary}
\theoremstyle{plain} 
\begin{document}

\title{On complemented, uniquely complemented and uniquely complemented nondistributive lattices \\
 (a historical and epistemological note about a mathematical mystery)}
\author{Daniel Parrochia}
\date{University of Lyon (France)}
\maketitle

\textbf{Abstract.}

Complemented lattices and uniquely complemented lattices are very important, not only in mathematics, but also in physics, biology, and even in social sciences. They have been investigated for a long time, especially by Huntington, Birkhoff, Dilworth and others. And yet, on some of these structures - namely, uniquely complemented nondistributive lattices -, despite the many existing articles concerning them, we basically know very little. In this article, we situate these lattest structures in the context of complemented and uniquely complemented lattices, offering a general overview of the links between these lattices and others, close to them, such as the orthocomplemented lattices of physics as well as various other partially ordered sets. We finally show how uniquely complemented nondistributive lattices have been constructed with the technique of free lattices.\\

\textbf{Key words.}
Complemented lattices, uniquely complemented lattices, uniquely complemented nondistributive lattices, Huntington, Birkhoff, Dilworth, Sali\u{\i}.

\section{Introduction}

Let us start with a non mathematical problem. Suppose we want to define an object or an entity $O$ whose set of properties $P$ are not accessible to us. Suppose however that there exists correlatively a set of properties $E$ such that $P \subset E$. Depending on the structure of $E$, it may seem possible to access $P$ from the subset of properties of $E$ which are known to us and which define, by hypothesis, an object $M$. For example, if $E$ is a Boolean lattice, where every element has a unique complement, then it is clear that for any property $p' \in E -P$ there exists a property $p \in P$ such that $(p')' = p$. We can thus hope to be able to define the object $O$, from the properties of the object $M$\footnote{Philosophically, this was the approach of negative theologians (see \cite{Par1}, 196-199). But this could also be the approach of physicists when, for example, it comes to describing a particle that they do not yet know, from the properties (mass, energy level, spin, etc.) of a few others.}. 

Due to a result of Dilworth (see \cite{Dil2}) obtained in 1945, which asserts that any lattice - including nondistributive lattices - can be embedded in a uniquely complemented lattice, a question may nevertheless arise: the condition of being boolean can't be weakened? If so, what is the general form of the $E$-lattice of properties and can it be precisely described?

E. V. Huntington, in his 1904 paper (see \cite{Hun}, 305)\footnote{For a historical commentary on Huntington's axiomatics (see \cite{Bar}).}, posed a similar problem which, in modern terms, may be expressed as follows: is every uniquely complemented lattice distributive or must one add, for that, additional conditions?

\section{Huntington's conjecture}

Huntington himself have conjectured\footnote{The conjecture is not explicit, but, at the end of the passage quoted from page 305 of his article, where the problem is stated, Huntington referred to postulate 19 of the second series of his postulates describing the Boolean algebra, which effectively characterizes this one: if $a \leq b$ then conversely $\bar{b} \leq \bar{a}$.} that the property of being "uniquely complemented" was sufficient to cause distributivity, and thus, define a Boolean lattice. As Gr\"{a}tzer has shown (see \cite{Gra1}; \cite{Gra2}), the conjecture was still widely accepted in the 1930s, reinforced - as it seemed to be - by a number of results of the following form: 
\begin{equation}
if\ a\ lattice\ L\ is\ uniquely\ complemented\ and\ has\ property\ X, then\ L\ is\ boolean.
\end{equation}
$X$ could mean different things: for example, to be modular, or to be relatively complemented (see \cite{Ber})\footnote{Let us roughly say, for the moment - a more precise definition will come later - that "relatively complemented" means: for all $a \leq b \leq c$, there is a unique $d$ with $b \wedge d = a$ and $b \vee d = c$.} or even to be orthocomplemented\footnote{"Orthocomplementation" is the name of complementation when applied to subspaces of a vector space (see def. 3.11).} , results known to Birkhoff and Von Neumann since the end of the 1930s (see \cite{Bir2}) and exposed in \cite{Bir1}. 

But $X$ could still mean "to be of finite dimension" (see \cite{Dil1}) or even "to be complete, atomic and dually atomic" (see \cite {Bir3})\footnote{A lattice $L$ is complete if $\bigwedge X$ and $\bigvee X'$ exist for any subset $X$ of $L$; a lattice $L$ is atomic if $ L$ has an element 0 and if, for all $a \in L, 0 < a$, there is an element $p$ such that $p \leq a$ and $p$ is an atom, that is, if $0 <p$. A dual atom is a meet-irreducible element whose upper cover is 1. A lattice is called dually atomic  (or dually atomistic) if each of its elements is a meet of dual atoms (see \cite{Ste}).}.

Huntington had himself initiated this series of restrictions by showing (in modern terms) that if $(B, \wedge, \vee, ')$ is a uniquely complemented lattice, then, by adding to it the property $x \wedge y = 0 \Rightarrow y \leq x'$, i.e. by making the complementation a bit special, $(B, \wedge, \vee, ')$ becomes a Boolean algebra, solution at the origin of many other proposals of the same kind.

In 1945 however, R. P. Dilworth (see \cite{Dil2}) showed that the "Huntington conjecture" was wrong. Through a very 
long and technical article, he proved, on the one hand, that there exists a uniquely complemented nondistributive lattice and, on the other hand, a much more general result:
\begin{equation}
Every\ lattice\ can\ be\ embedded\ into\ a\ uniquely\ complemented\ lattice.
\end{equation}
So it is a fact: a uniquely complemented and yet nondistributive lattice can exist. However, as Gr\"{a}tzer wrote (see \cite{Gra2}, 701), three problems remain:

\begin{enumerate}
 \item As all known examples of nondistributive uniquely complemented lattices are freely generated one way or another, a first question could be: is there a construction of a nondistributive uniquely complemented lattice that is different?
\item In the same vein, we could also ask : is there a "natural" example of a nondistributive uniquely complemented lattice from geometry, topology, or whatever else?
\item And finally, we would like - if possible - to answer this crucial question: is there a complete example of a nondistributive uniquely complemented lattice?
\end{enumerate}

We would like to advance a little on these three questions.

\section{Some definitions}

Let us first recall some definitions:

\begin{dfn}[Poset]
A {\it partially ordered set (or poset)} ($P, \leq$) consists of a nonempty set $P$ and a binary relation $\leq$ on $P$ such that $\leq$ satisfies the following properties:
\begin{enumerate}
\item $x \leq x$ \hspace{11\baselineskip} (Reflexivity);
\item $ x \leq y$ and $y \leq x$ implies that $x \leq x$ \quad (Antisymmetry);
\item $ x \leq y$ and $y \leq z$ implies that  $x \leq z$ \quad (Transitivity).

If $\leq$ satisfies also:

\item $x \leq y$ or $y\leq x$ \hspace{8\baselineskip} (Linearity)

then $P$ is a {\it chain}.
\end{enumerate}
\end{dfn}

\begin{dfn}[Bounded poset]
A bounded poset is one that has  both a bottom element 0 and a top element 1.
\end{dfn} 

\begin{dfn}[Bounds]
Let $H \subseteq P$ and $x \in P$. A bound $x$ of $H$ is the least upper bound of $H$ iff, for any upper bound $y$ of $H$, we have $x \leq y$. Similarly, a bound $w$ of $H$ is the greatest lower bound of $H$ iff, for any lower bound $z$ of $H$, we have $z \ge w$.
\end{dfn}

\begin{dfn}[Lattice]
A poset $(L, \leq)$ is a lattice if the great lower bound inf($x,y$) and the upper lower bound sup($x,y)$ exist for all $x, y \in L$.
\end{dfn}

\begin{dfn}[Modular lattice]
A lattice $L$ is said to be {\it modular} if:
\[
\forall x, y, z \in L: (x \leq z) \Rightarrow (x \vee (y \wedge z) = (x \vee y) \wedge z)
\]
\end{dfn}

\begin{dfn}[Orthomodular lattice]
A lattice with a zero 0 and a one 1 in which for any element $x$
 there is an orthocomplement $x^\perp$, i.e. an element such that;

1) $x \vee x^\perp =1, x \wedge x^\perp = 0, (x^\perp)^\perp = x$,

2) $x \leq y \Rightarrow x^\perp \ge y^\perp$,

and such that the orthomodular law:

3) $x \leq y \Rightarrow y = x \vee(y \wedge x^\perp)$

is satisfied, is said to be {\it orthomodular}.
\end{dfn}

\begin{dfn}[Distributive lattice]
A lattice $L$ is said to be {\it distributive} if:
\[
\forall x, y, z \in L, x \wedge (y \vee z) = (x \wedge y) \vee (x \wedge z)
\]
\textnormal{and/or}:
\[
x \vee (y \wedge z) = (x \vee y) \wedge (x \vee z).
\]
\end{dfn}

\begin{dfn}[Bounded lattice]
A bounded poset which is a lattice is a bounded lattice.
\end{dfn}

\begin{dfn}[Complement of an element]
If $L$ is a bounded lattice, then we say that $y \in L$ is a complement of $x \in L$ if $x \wedge y = 0$ and $x \vee y = 1$. In this case, we say that $x$ is a complemented element of $L$. Clearly, every complement of a complemented element is itsef complemented.
\end{dfn}

\begin{dfn}[Complemented lattice]
A lattice $L$ is a complemented lattice if every element of $L$ is complemented.
\end{dfn}

\begin{dfn}[Orthocomplemented lattice]
An orthocomplemented lattice is a complemented lattice in which every element has a distinguished complement, called an orthocomplement. This orthocomplement behaves like the complementary subspace of a subspace in a vector space.
\end{dfn}

Formally, let $L$ be a complemented lattice and denote $M$ the set of complements of elements of 
$L.\ M$ is clearly a subposet of $L$, with $\leq$ inherited from $L$. For each $a \in L$ let $M_{a} \subseteq M$
 be the set of complements of $a.\ L$ is said to be orthocomplemented if there is a function $\perp: L \to M$, called an orthocomplementation, whose image is written $a^\perp$ for any $a \in L$, such that:

1. $a^\perp \in M_{a}$,

2. $(a^\perp)^\perp =a$ and

3. $\perp$ is order-reversing; that is, for any $a, b \in L, a \leq b$ implies $b^\perp \leq a^\perp$.

The element $a^\perp$  is called an orthocomplement of $a$ (via $\perp$).

\begin{dfn}[Relatively complemented lattice]
A lattice $L$ is a relatively complemented lattice if every interval [$x,  y$] of $L$ is complemented. A complement in [$x, y$] of $a \in [x, y]$ is called a relative complement of $a$.
\end{dfn}

\begin{dfn}[Uniquely complemented lattice]
A complemented lattice $L$ in which complements are unique (that is, for all $x \in L$ there exist $x' \in L$ such that $(x')' = x$) is called a uniquely complemented lattice. 
\end{dfn}

\begin{dfn}[Residuated lattice]
A residuated lattice\footnote{This structure has been introduced by Morgan Ward and Robert Palmer Dilworth  is 1939 (see \cite{War}).  It is helpful to think of the last two operations as left and right division and thus the equivalences can be seen as "dividing" on the right by $b$ and "dividing" on the left by $a$. The study of such objects originated in the context of the theory of ring ideals in the 1930s. The collection of all two-sided ideals of a ring forms a lattice upon which one can impose a natural monoid structure making this object into a residuated lattice.} is an ordered algebraic structure $\langle L, \wedge, \vee, \centerdot, e, \setminus, /\rangle$
such that $\langle L, \wedge, \vee \rangle$ is a lattice, $\langle L,\centerdot, e\rangle$ is a monoid, and $\setminus$ and / are binary operations for which the equivalences
\[
a . b \leq c \Leftrightarrow a \leq c/b \Leftrightarrow b \leq a \setminus c
\]
hold for all $a, b, c \in L$.
\end{dfn}

\begin{dfn}[Atom]
If $L$ is a lattice with bottom element 0, then by an atom of $L$ we mean an element $a$ such that $0 \leq a$. If for every $x \in L \setminus \{0\}$ there is an atom $a$ such that $a \leq x$, then we say that $L$ is {\it atomic}.
\end{dfn}

\section{Some well-known theorems}

We present here, without their proofs, some well-known theorems which will be useful later.

\begin{thm}[\cite{Bir1}]
Any distributive lattice is modular.
\end{thm}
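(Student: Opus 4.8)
The plan is to show directly from the distributive law that the modular law holds. Recall the modular law asserts: for all $x, y, z \in L$, if $x \leq z$ then $x \vee (y \wedge z) = (x \vee y) \wedge z$. Since one inequality, namely $x \vee (y \wedge z) \leq (x \vee y) \wedge z$, holds in every lattice (whenever $x \leq z$), the real content is the reverse inequality, which I would obtain as an \emph{equality} straight out of distributivity.

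First I would invoke the join-distributive identity from the definition of a distributive lattice:
\[
x \vee (y \wedge z) = (x \vee y) \wedge (x \vee z).
\]
This is valid for arbitrary $x, y, z$. Then I would specialize using the hypothesis $x \leq z$: the hypothesis $x \leq z$ is equivalent to $x \vee z = z$. Substituting $x \vee z = z$ into the right-hand side immediately yields
\[
x \vee (y \wedge z) = (x \vee y) \wedge z,
\]
which is precisely the modular identity. Hence $L$ is modular.

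The only mild subtlety worth spelling out is the equivalence $x \leq z \iff x \vee z = z$, which is a standard consequence of the lattice axioms (the join is the least upper bound, so $z$ is an upper bound of $\{x,z\}$ exactly when $x \leq z$, and then it is the least one). A secondary point is that the Definition of distributive lattice offers the meet-form and the join-form as ``and/or'' alternatives; I would note that in any lattice each of the two distributive laws implies the other, so it is harmless to use the join-form here. There is no real obstacle: the proof is a one-line substitution once the order-theoretic translation $x \leq z \Leftrightarrow x \vee z = z$ is in hand, and the lattice-theoretic fact that the two distributive laws are equivalent can be cited or relegated to a remark.
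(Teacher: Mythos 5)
Your proof is correct and is the standard argument; the paper itself states this theorem without proof (it appears in the list of ``well-known theorems'' cited to Birkhoff), so there is no in-paper proof to compare against. One small simplification: since the paper's definition of distributivity offers the meet-form ``and/or'' the join-form, you could avoid invoking the equivalence of the two laws altogether by arguing from whichever form is given --- e.g.\ from the meet-form, $x \leq z$ gives $z \wedge x = x$, so $(x \vee y) \wedge z = (x \wedge z) \vee (y \wedge z) = x \vee (y \wedge z)$ directly --- but your route via $x \vee z = z$ and the join-form is equally valid.
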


The two following theorems come from (\cite{Gra2}, 62):

\begin{thm}
In a bounded distributive lattice, an element can have only one complement (so a bounded distributive lattice is a boolean lattice).
\end{thm}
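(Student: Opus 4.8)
The plan is to prove uniqueness of complements in a bounded distributive lattice $L$ by a direct algebraic argument. Suppose $x \in L$ has two complements $y$ and $z$, so that $x \wedge y = 0 = x \wedge z$ and $x \vee y = 1 = x \vee z$. First I would write $y = y \wedge 1 = y \wedge (x \vee z)$, then apply the distributive law to obtain $y = (y \wedge x) \vee (y \wedge z) = 0 \vee (y \wedge z) = y \wedge z$, which shows $y \leq z$. By the completely symmetric computation, starting from $z = z \wedge (x \vee y)$, one gets $z = z \wedge y \leq y$. Antisymmetry of $\leq$ then yields $y = z$, so the complement of $x$ is unique.

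Once uniqueness is established, the parenthetical claim that a bounded distributive \emph{complemented} lattice is a Boolean lattice is essentially immediate: a Boolean lattice is by definition a bounded distributive lattice in which every element has a (necessarily unique, by the first part) complement, so the operation $x \mapsto x'$ is well-defined, and the usual Boolean identities (De Morgan laws, involution $(x')' = x$) follow from distributivity and the defining equations of the complement by the same kind of short manipulation. I would note that the statement of the theorem itself only asserts uniqueness plus this definitional consequence, so no further work is needed beyond recording that $L$ equipped with the unique complementation is Boolean.

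There is no real obstacle here; the only thing to be careful about is using the distributive law in the form actually needed. Since the Definition of distributive lattice in the excerpt gives both the meet-over-join and join-over-meet forms (and they are equivalent, a fact one may cite or note in passing), the computation $y = y \wedge (x \vee z) = (y \wedge x) \vee (y \wedge z)$ uses the meet-distributes-over-join form directly. The argument is symmetric in $y$ and $z$, so writing it once and invoking symmetry suffices. I would keep the proof to three or four lines and present it in a single displayed chain of equalities for $y$, then state that the same holds for $z$ with the roles interchanged, then conclude by antisymmetry.

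\begin{proof}
Let $L$ be a bounded distributive lattice and suppose $x \in L$ has complements $y$ and $z$, i.e.\ $x \wedge y = x \wedge z = 0$ and $x \vee y = x \vee z = 1$. Then
\[
y = y \wedge 1 = y \wedge (x \vee z) = (y \wedge x) \vee (y \wedge z) = 0 \vee (y \wedge z) = y \wedge z,
\]
so $y \leq z$. Interchanging the roles of $y$ and $z$ gives, in the same way, $z = z \wedge (x \vee y) = (z \wedge x) \vee (z \wedge y) = z \wedge y$, hence $z \leq y$. By antisymmetry $y = z$, so the complement is unique. Consequently, if $L$ is moreover complemented, the map sending each element to its unique complement is a well-defined complementation, and $L$ is a Boolean lattice.
\end{proof}
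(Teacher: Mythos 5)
Your argument is correct: it is the standard three-line computation $y = y \wedge (x \vee z) = (y \wedge x) \vee (y \wedge z) = y \wedge z$, applied symmetrically, and this is exactly the classical proof found in Gr\"{a}tzer, to whom the paper attributes the result. Note that the paper itself states this theorem without proof (it appears in the list of ``well-known theorems'' quoted from \cite{Gra2}, p.~62), so there is no in-paper argument to compare against; your proof fills that gap correctly. You were also right to add the hypothesis of complementedness for the parenthetical conclusion --- a bounded distributive lattice is Boolean only once every element is assumed to have a (then unique) complement, a qualification the paper's phrasing leaves implicit.
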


\begin{thm}
In a bounded distributive lattice, if $a$ has a complement, then it also has a relative complement in any interval containing it.
\end{thm}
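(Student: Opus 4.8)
The plan is to exhibit an explicit candidate and then verify it directly using distributivity. Fix the bounds $0,1$ of $L$, let $a'$ be a complement of $a$ (so $a \wedge a' = 0$ and $a \vee a' = 1$), and let $[x,y]$ be any interval with $x \le a \le y$. I would propose
\[
b := (a' \wedge y) \vee x
\]
as the relative complement of $a$ in $[x,y]$, and check the three required facts in turn: that $b$ lies in $[x,y]$, that $a \wedge b = x$, and that $a \vee b = y$.

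First I would check membership. The inequality $x \le b$ is immediate, since $b$ is a join with $x$. For $b \le y$, note that $a' \wedge y \le y$ and that $x \le a \le y$, so both joinands sit below $y$, hence $b \le y$. Thus $b \in [x,y]$.

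Next I would compute the meet. Using the distributive law (Definition of distributive lattice) and the fact that $x \le a$ gives $a \wedge x = x$,
\[
a \wedge b = (a \wedge a' \wedge y) \vee (a \wedge x) = (0 \wedge y) \vee x = x .
\]
Then the join: since $x \le a$ we have $a \vee x = a$, so $a \vee b = a \vee (a' \wedge y)$, and applying the dual distributive identity together with $a \vee y = y$ (from $a \le y$),
\[
a \vee b = (a \vee a') \wedge (a \vee y) = 1 \wedge y = y .
\]
Hence $b$ is a complement of $a$ in the bounded lattice $[x,y]$, which proves the claim.

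I do not expect any real obstacle here: the argument is a short computation. The only point deserving a word of care is that both forms of the distributive law are used — the $\wedge$-over-$\vee$ form for the meet and the $\vee$-over-$\wedge$ form for the join — which is legitimate because the two distributive identities are equivalent in any lattice, as recalled just after the definition. Finally, one may add that the relative complement just constructed is in fact the \emph{only} one: the interval $[x,y]$ is itself a bounded distributive lattice, so the previous theorem (uniqueness of complements in a bounded distributive lattice) applies to it.
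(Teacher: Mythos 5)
Your proof is correct: the candidate $(a' \wedge y) \vee x$ is the standard relative complement, and your three verifications (membership, $a \wedge b = x$, $a \vee b = y$) are exactly the computations needed, with distributivity and the hypotheses $x \le a \le y$ used in the right places. The paper itself states this theorem without proof (it is quoted from Gr\"atzer), so there is nothing to compare against; your argument is the classical one found in the cited sources, and your closing remark that uniqueness follows from the preceding theorem applied to the bounded distributive lattice $[x,y]$ is a correct bonus.
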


\begin{thm}[\cite{Bly}, 78]
In a distributive lattice, all complements and relative complements that exist are unique.
\end{thm}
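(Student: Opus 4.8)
The plan is to reduce both assertions to a single three-line computation. First I would observe that the two cases are really one and the same: a complement of an element $x$ in a bounded lattice is exactly a relative complement of $x$ in the interval $[0,1]$, and, more to the point, every interval $[a,b]$ of a distributive lattice $L$ is itself a distributive lattice — it is a sublattice, and the distributive identities, being equational, are inherited — with bottom element $a$ and top element $b$. Hence it suffices to prove the statement for complements in a bounded distributive lattice, and the relative-complement case follows by applying that result inside $[a,b]$.

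Next I would carry out the computation. Suppose $y$ and $z$ are both complements of $x$ in a bounded distributive lattice, so that $x \wedge y = 0 = x \wedge z$ and $x \vee y = 1 = x \vee z$. Using the meet form of the distributive law,
\[
y \;=\; y \wedge 1 \;=\; y \wedge (x \vee z) \;=\; (y \wedge x) \vee (y \wedge z) \;=\; 0 \vee (y \wedge z) \;=\; y \wedge z ,
\]
so $y \leq z$. Exchanging the roles of $y$ and $z$ gives $z \leq y$, and antisymmetry yields $y = z$. Translating back to an interval $[a,b]$: if $y,z$ are relative complements of $x$ there, the same line reads $y = y \wedge b = y \wedge (x \vee z) = (y \wedge x)\vee(y \wedge z) = a \vee (y \wedge z) = y \wedge z$, where the last equality uses $a \leq y$ and $a \leq z$ (because $y,z \in [a,b]$), forcing $a \leq y \wedge z$.

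There is essentially no obstacle here; the computation is entirely routine. The only point deserving a word of care is that the definition of distributive lattice above is stated with an "and/or": in any lattice the meet-distributive identity holds if and only if the join-distributive identity does, so one may freely use whichever is convenient — the argument above uses the meet form, and the join form gives the order-dual derivation. (Alternatively, one could obtain the statement by combining Theorem~4.2 with the interval-is-a-distributive-lattice remark, but the direct computation is shorter and simultaneously handles lattices without a $0$ or a $1$.)
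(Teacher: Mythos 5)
Your proof is correct, and it is the standard argument: the one-line distributive computation $y = y\wedge(x\vee z) = (y\wedge x)\vee(y\wedge z)$ giving $y\le z$, plus symmetry, with the relative-complement case handled either by noting that an interval $[a,b]$ is itself a bounded distributive lattice or by running the same computation with $a,b$ in place of $0,1$, as you do. The paper itself states this theorem without proof (Section 4 explicitly lists well-known results, citing Blyth), so there is nothing to compare against; your argument is exactly the classical proof found in the cited source, and your remark that the two distributive identities are equivalent in any lattice correctly disposes of the ``and/or'' in the paper's Definition 3.7.
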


\begin{thm}[\cite{Bir3}]
Every uniquely complemented atomic lattice is distributive.
\end{thm}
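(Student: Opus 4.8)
The plan is to reduce the problem to one of the classical results of the form~(1) above — for instance, that a uniquely complemented \emph{orthocomplemented} (equivalently: order-reversingly complemented) lattice is Boolean — by using atomicity to upgrade the complementation, which in a uniquely complemented lattice is merely a bijection, to a well-behaved one. Throughout write $x'$ for the unique complement of $x$; the elementary facts $0'=1$, $1'=0$, $(x')'=x$, and bijectivity of $x\mapsto x'$ are immediate from the definition.

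First I would prove the key structural lemma on atoms: \emph{if $p$ is an atom, then $p'$ is a dual atom, i.e. $p'$ is covered by $1$}. Indeed, let $p'<x\leq 1$. Since $p$ is an atom, either $p\leq x$, whence $x\geq p\vee p'=1$ and $x=1$; or $p\wedge x=0$, whence $p\vee x\geq p\vee p'=1$, so that $p$ is a complement of $x$, forcing $p=x'$ by uniqueness and $x=(x')'=p'$, contradicting $p'<x$. The same circle of ideas yields the \emph{dichotomy for atoms}: for every atom $p$ and every $x\in L$, at least one — hence, since $p\wedge x\wedge x'=0$, exactly one — of $p\leq x$ and $p\leq x'$ holds. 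The ``at most one'' part is trivial; for ``at least one'' one assumes $p\wedge x=0=p\wedge x'$ and derives a contradiction by examining the complement of $x\vee p$ and comparing it with $x'$ and with $p'$ via the previous paragraph. This dichotomy is precisely what fails in the diamond $M_3$, where an atom $c$ lies below $a\vee b=1$ but below neither $a$ nor $b$ nor their complements; ruling out such configurations is exactly what unique complementation must be made to do.

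From the dichotomy I would extract that complementation is order-reversing, so that $L$ becomes orthocomplemented, and then invoke the known theorem that a uniquely complemented orthocomplemented lattice is Boolean, hence distributive. Concretely, to show $a\leq b\Rightarrow b'\leq a'$: every atom $q\leq b'$ satisfies $q\not\leq b$, hence $q\not\leq a$, hence $q\leq a'$ by the dichotomy; so every atom below $b'$ lies below $a'$, and (granting the density property discussed below) one concludes $b'\leq a'$. Equivalently, the same bookkeeping with the atom-sets $A(x):=\{\,p:p\text{ an atom},\ p\leq x\,\}$, which satisfy $A(x\wedge y)=A(x)\cap A(y)$ and $A(x')=\overline{A(x)}$, gives the De~Morgan laws and hence distributivity directly, provided $A$ separates elements.

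The main obstacle is precisely this separation, together with the ``at least one'' half of the dichotomy: both ultimately rest on showing that in a uniquely complemented atomic lattice the atoms are dense enough — e.g. that $A(x\vee y)=A(x)\cup A(y)$, equivalently that every element is the join of the atoms below it, equivalently that sub-intervals are themselves well-behaved. Atomicity in the weak sense (every nonzero element dominates an atom) does \emph{not} supply this for free — $M_3$ is again the witness — so the real work is to exclude, using only uniqueness of complements, an atom $c$ lying below a join $x\vee y$ but below neither $x$, nor $y$, nor $x'$, nor $y'$; this is where a careful, somewhat Dilworth-style argument (or a Zorn/maximality argument producing enough dual atoms from the atom lemma) is needed. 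Once that density is in hand, the passage to order-reversal and thence to distributivity is routine.
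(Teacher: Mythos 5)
First, a point of comparison: the paper does not prove this statement at all --- Section 4 explicitly lists it ``without their proofs'' as a quoted result of Birkhoff and Ward \cite{Bir3} --- so there is no in-paper argument to measure your route against, and your proposal has to stand on its own as a proof. Its correct and genuinely useful part is the dual-atom lemma: if $p$ is an atom then $p'$ is covered by $1$, and your argument for that (any $x$ with $p'<x$ either lies above $p$, forcing $x=1$, or is a complement of $p$, forcing $x=p'$ by uniqueness) is sound. The intended reduction target --- complementation antitone, then invoke the fact that a uniquely complemented lattice with order-reversing complementation is distributive (Theorem 7.16 of the paper) --- is also a legitimate endpoint.

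The genuine gap is exactly where you place it, and it is not a technicality but the entire content of the theorem. The ``at least one'' half of the dichotomy (every atom $p$ satisfies $p\leq x$ or $p\leq x'$) does not follow from the dual-atom lemma by any short comparison of complements: the natural move, showing that $(x\vee p)'\vee p$ is a complement of $x$ when $p\wedge x=0$ (which would give $p\leq x'$), founders because $x\vee\bigl((x\vee p)'\vee p\bigr)=1$ is immediate while $x\wedge\bigl((x\vee p)'\vee p\bigr)=0$ is precisely a distributivity-type statement; trying to refute a nonzero meet by picking an atom $q$ below it reproduces the same configuration (an atom below a join but below neither joinand), so the argument is circular. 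Likewise, even granting the dichotomy, the step from ``every atom below $b'$ lies below $a'$'' to $b'\leq a'$ needs the atoms to be join-dense (every element the join of the atoms below it), and, as you yourself note, atomicity in the stated sense does not provide this; nothing in the proposal supplies it. Since both of these are flagged as ``the real work'' and left to ``a careful, somewhat Dilworth-style argument'' that is not given, what you have is a reasonable plan of attack with one correct lemma, not a proof: the uniqueness of complements must be exploited in a substantially more elaborate way (as in Birkhoff--Ward's original argument and its later refinements) precisely at the two points you defer.
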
 

\begin{thm}[\cite{Bir4}]
A lattice $L$ is distributive if and only if there exists no sublattice $A \subseteq L$ isomorphic to either $M_{3}$ or $N_{5}$.
\end{thm}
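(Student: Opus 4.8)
The plan is to treat the two implications separately; the forward direction is immediate, and the reverse direction will be handled by a case split on modularity.

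For the forward direction, I would use the fact that the two distributive identities are preserved under passage to sublattices (a subset of $L$ closed under $\wedge$ and $\vee$ in which one identity holds is exactly a distributive sublattice). Hence it suffices to observe that neither the diamond $M_{3}$ (the five-element lattice $\{0,a,b,c,1\}$ with $a,b,c$ pairwise incomparable) nor the pentagon $N_{5}$ (the five-element lattice $\{0,a,b,c,1\}$ with $0<a<b<1$, $0<c<1$, and $c$ incomparable to $a$ and $b$) is distributive: in $M_{3}$ one has $a\wedge(b\vee c)=a\wedge 1=a\ne 0=(a\wedge b)\vee(a\wedge c)$, and in $N_{5}$ one has $b\wedge(a\vee c)=b\wedge 1=b\ne a=(b\wedge a)\vee(b\wedge c)$. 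So a distributive $L$ can admit no such sublattice.

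For the converse I would argue contrapositively: assuming $L$ is not distributive, I would exhibit a sublattice isomorphic to $M_{3}$ or $N_{5}$. \emph{Case 1: $L$ is not modular.} Then there are $x\le z$ and $y$ with $x\vee(y\wedge z)<(x\vee y)\wedge z$. Setting
\[
p=y\wedge z,\qquad a=x\vee(y\wedge z),\qquad b=(x\vee y)\wedge z,\qquad q=x\vee y,
\]
I would check, using only absorption and the hypothesis $x\le z$, that $p<a<b<q$ (strictness of $p<a$ and of $b<q$ follows from the strict inequality $a<b$, since $p=a$ forces $x\le y$ and $b=q$ forces $y\le z$, either of which collapses $a$ onto $b$), that $a\wedge y=b\wedge y=p$, that $a\vee y=b\vee y=q$, and that $y$ is incomparable with each of $a,b$. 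Then $\{p,a,b,y,q\}$ is a sublattice isomorphic to $N_{5}$. \emph{Case 2: $L$ is modular but not distributive} (a genuine possibility, since by Theorem 4.1 distributivity is strictly stronger than modularity). Choose $x,y,z$ for which the distributive law fails, and set
\[
d=(x\wedge y)\vee(y\wedge z)\vee(z\wedge x),\qquad u=(x\vee y)\wedge(y\vee z)\wedge(z\vee x),
\]
\[
a=(x\wedge u)\vee d,\qquad b=(y\wedge u)\vee d,\qquad c=(z\wedge u)\vee d.
\]
Using the modular identity repeatedly I would show that $a=(x\vee d)\wedge u$ and likewise for $b,c$, that $d\le a,b,c\le u$, and that $a\wedge b=a\wedge c=b\wedge c=d$ while $a\vee b=a\vee c=b\vee c=u$; hence $\{d,a,b,c,u\}$ is closed under $\wedge$ and $\vee$. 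To see that it is genuinely a copy of $M_{3}$ and not a collapsed quotient, I would prove $d<u$: if $d=u$, then, since $x\wedge(y\vee z)\le u$ and $x\wedge(y\vee z)\le x$, the modular law gives $x\wedge(y\vee z)=\big(x\wedge(y\vee z)\big)\wedge x\le d\wedge x=(x\wedge y)\vee(x\wedge z)$, which together with the trivial reverse inequality (and the symmetric computations) contradicts the choice of $x,y,z$. Once $d<u$ is known, the relations $a\wedge b=d$, $a\vee b=u$, and their mates force $d,a,b,c,u$ to be five distinct elements with $a,b,c$ pairwise incomparable, i.e.\ a sublattice isomorphic to $M_{3}$.

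The main obstacle is Case 2: one must actually carry out the several modular-law manipulations that yield the six meet/join identities among $a,b,c$ and, above all, the implication ``$d=u\Rightarrow$ distributivity for this triple'', which is the sole place where modularity is used in an essential way. By contrast, Case 1 is pure bookkeeping with absorption, and the forward direction is a two-line finite verification.
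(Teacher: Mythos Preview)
Your argument is the classical Birkhoff--Dedekind proof and is correct as outlined: the forward direction is the two-line verification you give, the nonmodular case produces a pentagon via $p=y\wedge z$, $a=x\vee(y\wedge z)$, $b=(x\vee y)\wedge z$, $q=x\vee y$, and the modular--nondistributive case produces a diamond via the median elements $d,u$ and $a=(x\wedge u)\vee d=(x\vee d)\wedge u$, etc. The one place where your sketch is slightly compressed is the line ``the modular law gives $x\wedge(y\vee z)=\big(x\wedge(y\vee z)\big)\wedge x\le d\wedge x=(x\wedge y)\vee(x\wedge z)$'': the nontrivial equality here is $d\wedge x=(x\wedge y)\vee(x\wedge z)$, which indeed follows from one application of modularity (with $(x\wedge y)\vee(x\wedge z)\le x$), not the first equality, which is trivial. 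With that understood, everything goes through.

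There is, however, nothing in the paper to compare your proof against. Theorem~4.6 is listed in Section~4 among the results that the author explicitly presents ``without their proofs'', with only a reference to \cite{Bir4}. You have simply supplied the standard textbook argument that the paper deliberately omits.
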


\section{Non distributivity and complementation}

A lattice in which each element has at most one complement may have elements with no complement at all. It is rather easy to come up with nondistributive lattices with that property, even if we require that there is at least one pair of elements which are complements of one another.

A trivial example is a bounded nondistributive lattice in which no element except 0 and 1 has a complement.

For less trivial cases - some in which there are other pairs of complements - we can take the following examples:

\subsection{Example 1}

We begin wit a complemented nonmodular lattice (see Fig. 1).
\begin{figure}[h] 
    \vspace{-0.5\baselineskip}
	   \hspace{9\baselineskip}	  
	   \includegraphics[width=8in]{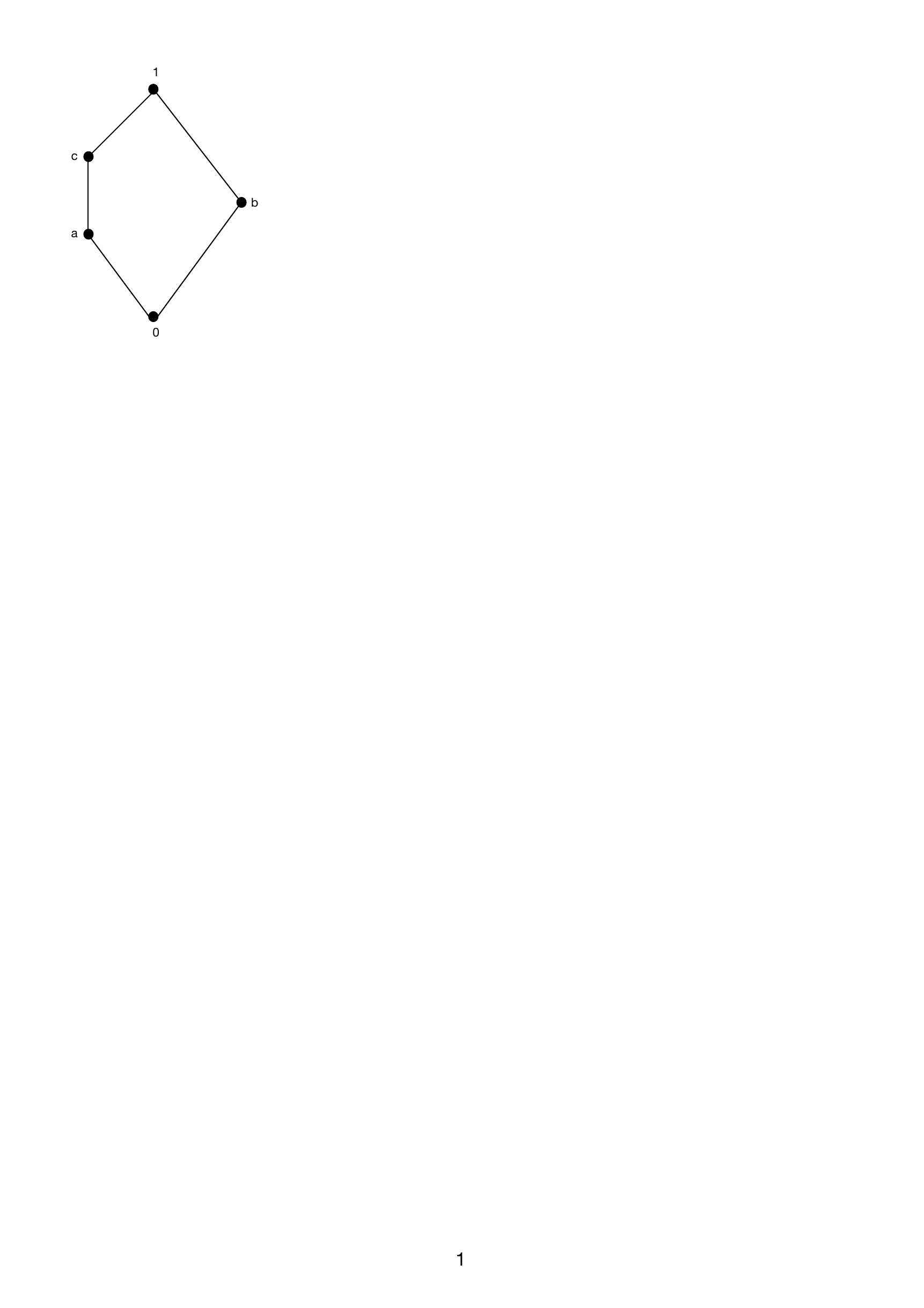} 
	      \vspace{-42\baselineskip}
	   \caption{The complemented nonmodular lattice $N_{5}$}
	   \label{fig: treillis11}
	\end{figure}
	
In this lattice, we can verify that $a \wedge b = 0$ and $a \vee b = 1$, so $\bar{a}^1 = b, \bar{b}^1 = a$.

We also have: $b \wedge c = 0$ and $b \vee c = 1$, so $\bar{b}^2 = c, \bar{c}^1 = b$.

Besides, we obviously have: $0 \wedge 1 = 0$ and $0 \vee 1 = 1$, so $\bar{0} = 1$ and $\bar{1} = 0$.

This lattice is therefore complemented. But we can show that it is not modular:
\[
b \leq d : b \vee(c \wedge d) = b \vee a = b,
\]
\[
\ \ \qquad (b \vee c) \wedge d = e \wedge d = d.
\]
So it is not distributive.

\subsection{Example 2}

Consider now the lattice of Fig. 2:

\begin{figure}[h] 
    \vspace{-0.5\baselineskip}
	   \hspace{9\baselineskip}	  
	   \includegraphics[width=8in]{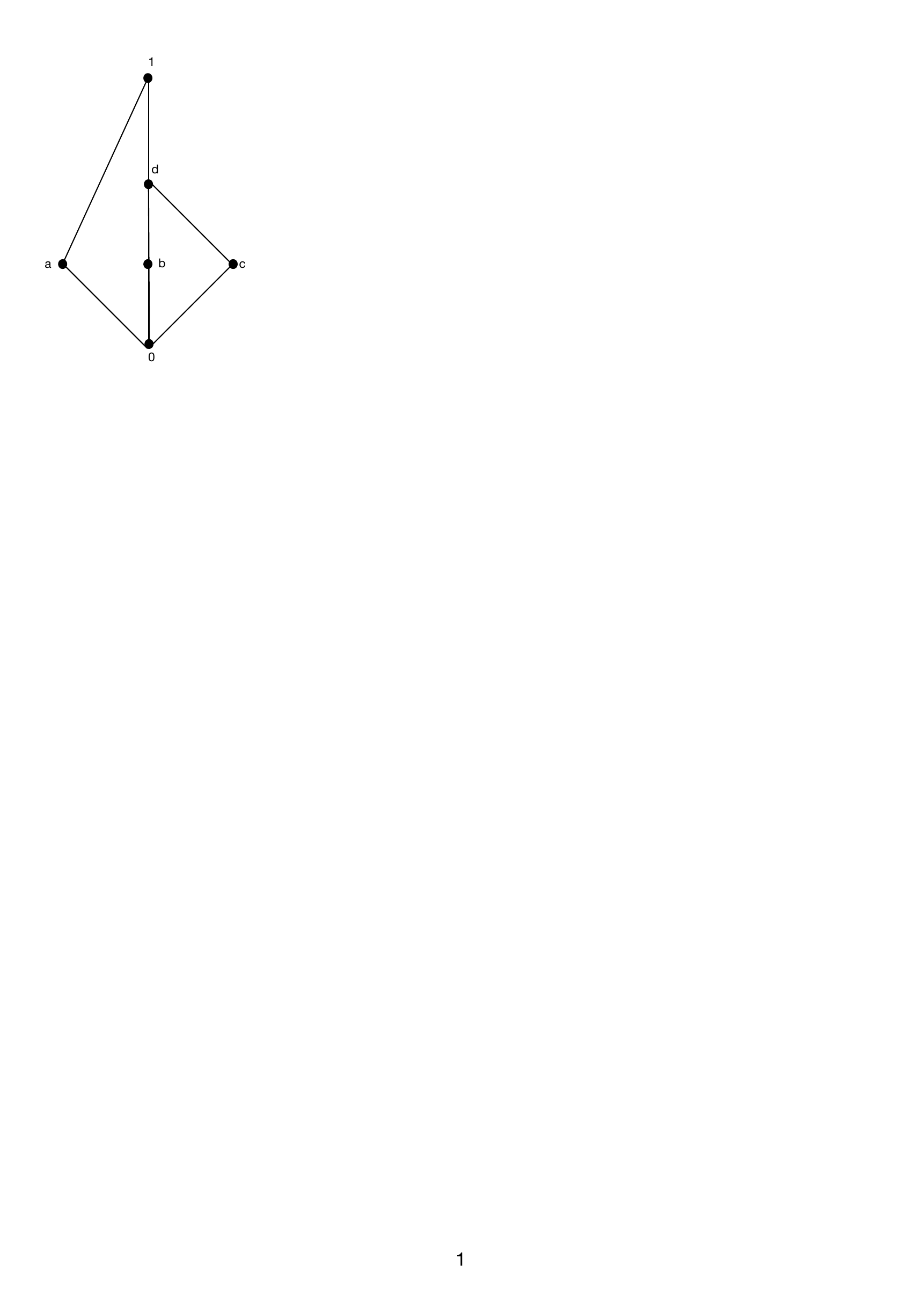} 
	      \vspace{-42\baselineskip}
	   \caption{A complemented but nondistributive lattice}
	   \label{fig: treillis22}
	\end{figure}
	
As before, 0 is the complement of 1 and vice versa. Let's move on to the other elements. We have, in particular:

 $a \wedge b = 0$ and $a \vee b = 1$, so $\bar{a}^1 = b, \bar{b}^1 = a$,
 
  $a \wedge c = 0$ and $a \vee c = 1$, so $\bar{a}^2 = c, \bar{c}^1 = a$,
  
   $a \wedge d = 0$ and $a \vee d = 1$, so $\bar{a}^3 = d, \bar{d}^1 = a$.
   
  The lattice is complemented ($a$ has three complements). But it is not distributive. It is easy to see that:
  \[
  a \vee (b \wedge c) = a \vee 0 = a \neq (a \vee b) \wedge a \vee c) = 1 \wedge 1 = 1,
  \]
  and:
  \[
  c \wedge (a \vee b) = c \wedge 1 = c \neq (c \wedge a) \vee (c \wedge b) = 0 \vee 0 = 0.
  \]
  
  \subsection{Example 3}
  
  Let us now give an example of a complemented modular but nondistributive lattice. This is the famous lattice $M_{3}$ (see Fig. 3).
                
                \begin{figure}[h] 
    \vspace{-1\baselineskip}
	   \hspace{7\baselineskip}	  
	   \includegraphics[width=7in]{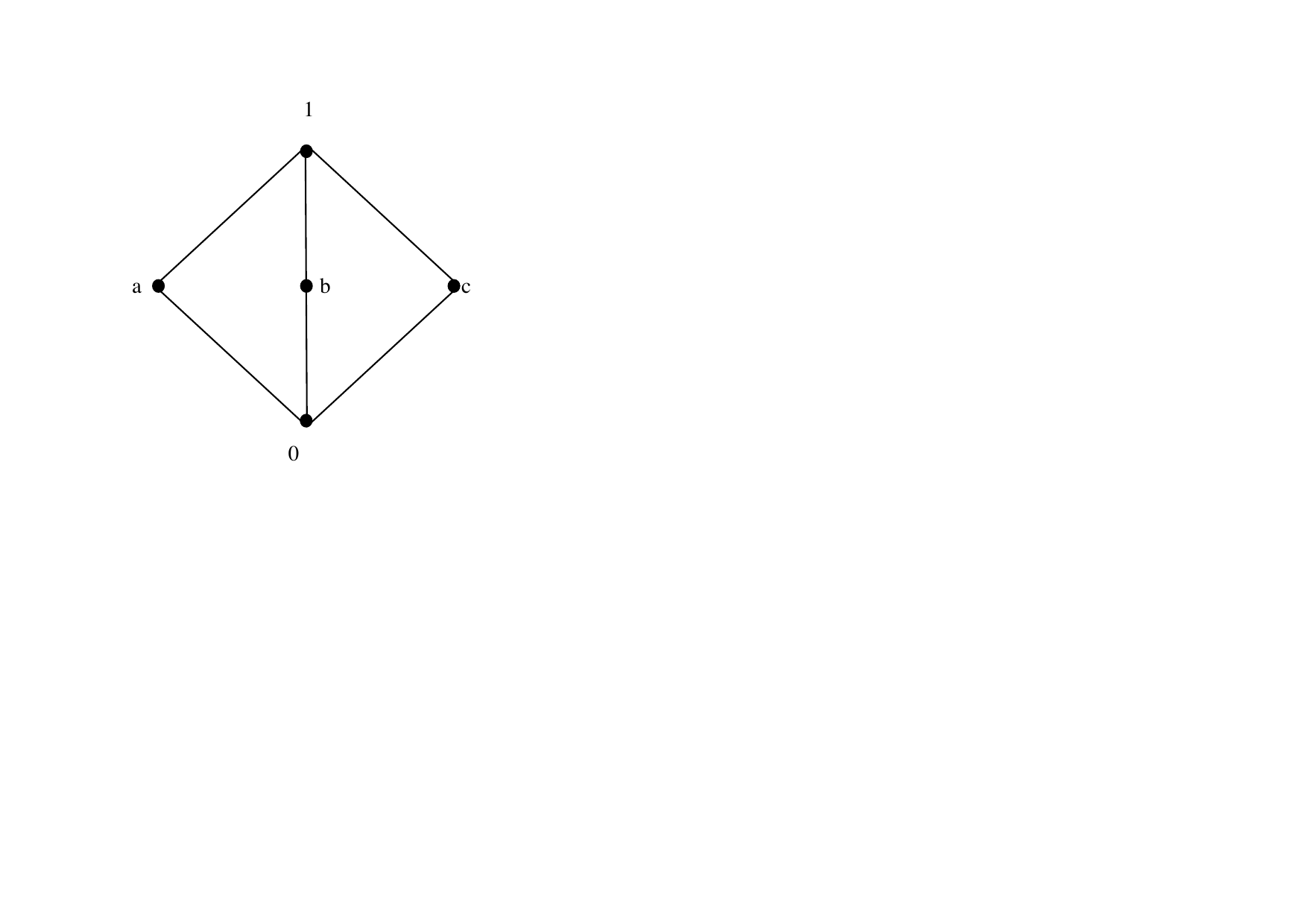} 
	      \vspace{-13\baselineskip}
	   \caption{The nondistributive lattice $M_{3}$}
	   \label{fig: treillis33}
	\end{figure}
	           
         We have, in particular:
         
         $a \wedge b = 0, a \vee b = 1$, so $\bar{a}^1 = b$ and $\bar{b}^1 = a$,         
         
           $a \wedge c = 0, a \vee c = 1$, so $\bar{a}^2 = c$ and $\bar{c}^1 = a$,
           
              $b \wedge c = 0, b \vee c = 1$, so $\bar{b}^2 = c$ and $\bar{c}^1 = a$.       
       
       As before, we also have obviously: $\bar{0} = 1$ and $\bar{1} = 0$.

\subsection{Example 4}                         
                               
In Fig. 4,  (0, 1) and $(a,b)$ are the only pairs of complements, and the lattice is not distributive, since it has $M_{3}$ as a sublattice.

  \begin{figure}[h] 
    \vspace{-1\baselineskip}
	   \hspace{7\baselineskip}	  
	   \includegraphics[width=7in]{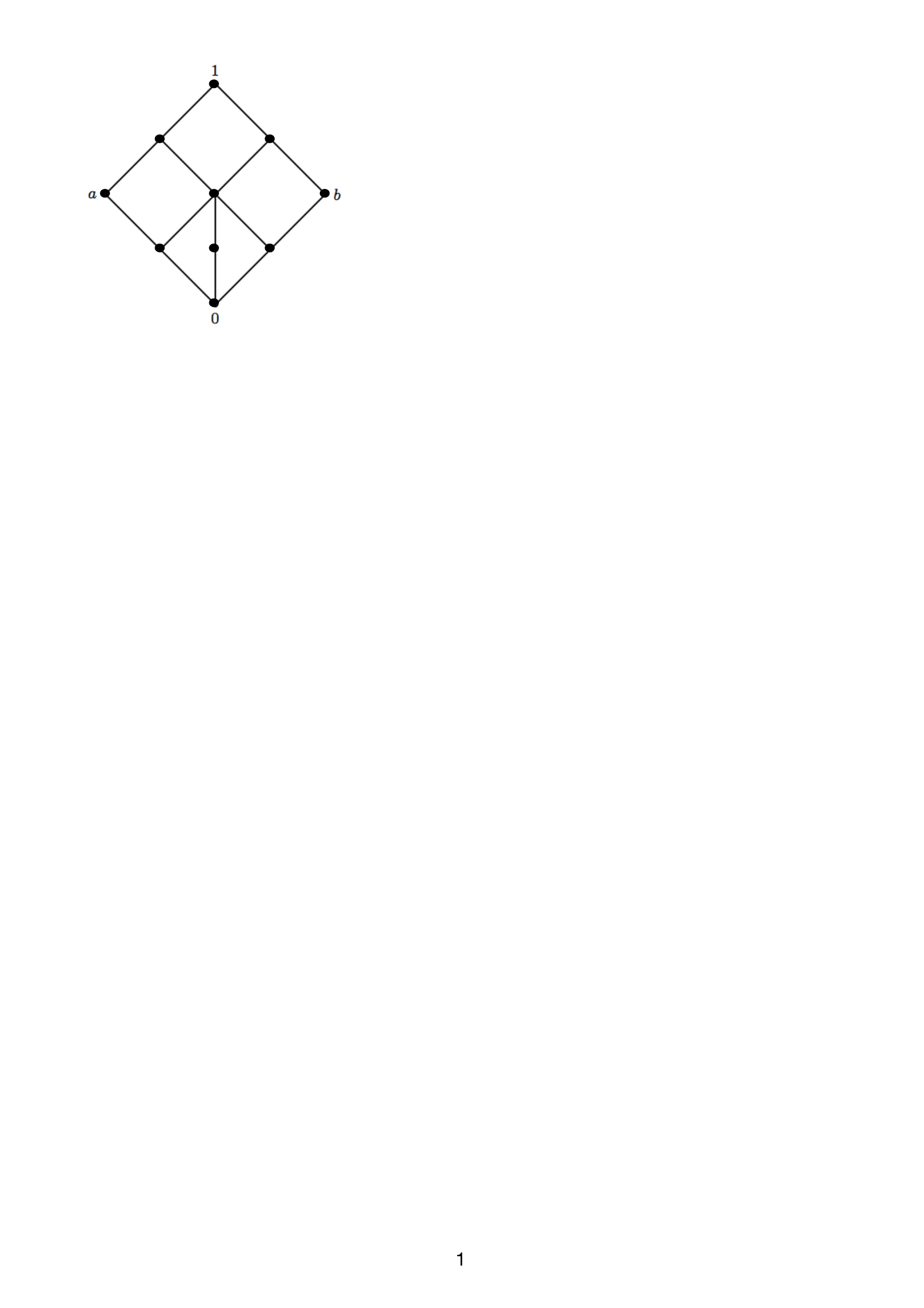} 
	      \vspace{-38\baselineskip}
	   \caption{A nondistributive lattice with $M_{3}$ as sublattice}
	   \label{fig: treillis44}
	\end{figure}
	
\subsection{Example 5}

The orthocomplemented lattices or, as we sometimes say, the {\it ortholattices} (used in quantum mechanics) are generally nondistributive. The non-distributivity results from the inclusion relations between vector subspaces of the three-dimensional vector space, which lead to a nondistributive lattice. Now the logical relations between the propositions of quantum mechanics - for example, those which describe the spin of a particle - fit into a lattice analogous to the lattice of the vector subspaces of three-dimensional space. Fig. 5 (B) gives an example of an orthocomplemented nondistributive lattice.
	
	\begin{figure}[h] 
    \vspace{-0.5\baselineskip}
	   \hspace{-1\baselineskip}	  
	   \includegraphics[width=7in]{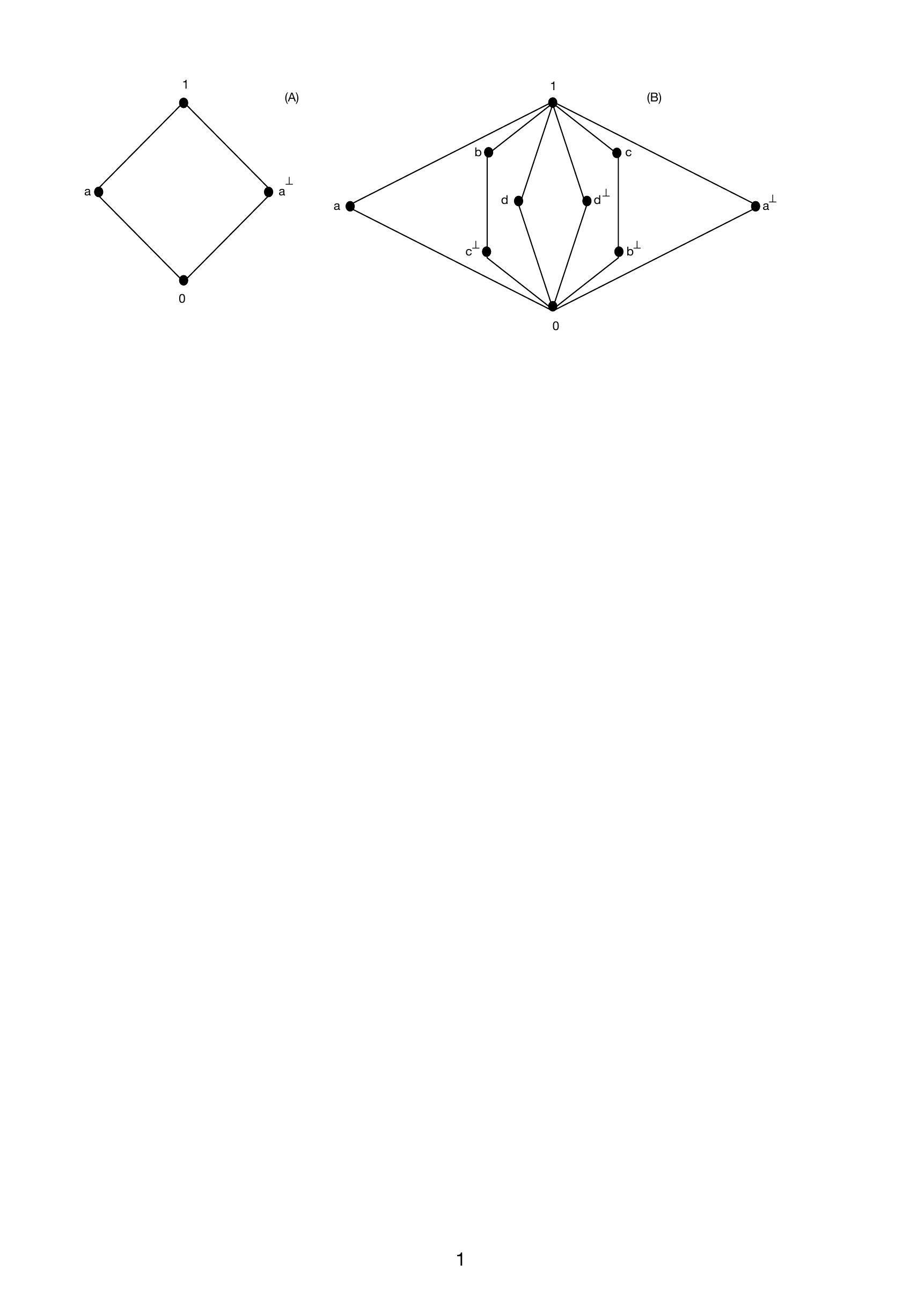} 
	      \vspace{-38\baselineskip}
	   \caption{Orthocomplemented distributive (A) and nondistributive (B) lattices}
	   \label{fig: treillis55}
	\end{figure}
	
	Orthocomplemented lattices are always complemented, but not necessarily uniquely complemented. Orthocomplemented lattices that are uniquely complemented are also distributive, and so Boolean (see Fig. 5 (A)). This kind of lattices is associated with classical mechanics, not with quantum mechanics.
		
\section{Graph of interactions between some of the main lattices}

If we recapitulate what we know about the previous lattices, we obtain the following propositions (most of them can be found in \cite{Rut}, if not in \cite{Bir1}).

1. A Boolean algebra (BA) is a complemented distributive lattice (DL) (this is the definition). It is also a Heyting algebra (HA)  and an orthocomplemented lattice (OCL).

2. A distributive orthocomplemented lattice (DOL) is orthomodular (OML) (so a Boolean algebra is orthomodular).

3. An orthomodular lattice (OML)  is orthocomplemented and an orthocomplemented lattice (OCL)  is complemented (CL). So an orthomodular lattice is complemented (but not necessarily uniquely complemented).

4. A complemented lattice (CL) is bounded (BL). 

5. A Heyting algebra (HA) is bounded (BL) and residuated (RL).

6. A distributive lattice (DL) is modular (ML).

7. A modular complemented lattice (MCL) is relatively complemented (RCL) (so a Boolean algebra is relatively complemented).

8. A Heyting algebra (HA) is a distributive lattice (DL).

9. A totally ordered set (TOS) is a distributive lattice (DL).

10. A modular lattice (ML) is semimodular SML) (so a distributive lattice is semimodular).

11. A semimodular lattice (SML)  is atomic (AL) (so a modular lattice is atomic).

12. An atomic lattice (AL) is a lattice (L). 

13. A lattice (L) is a semilattice (SL). 

14. A semilattice (SL) is a partially ordered set (POS). 

All this lead to the graph of Fig. 6.

\begin{figure}[h] 
 \vspace{-4\baselineskip}
  	   \hspace{0\baselineskip}	  	  
	   \includegraphics[width=7in]{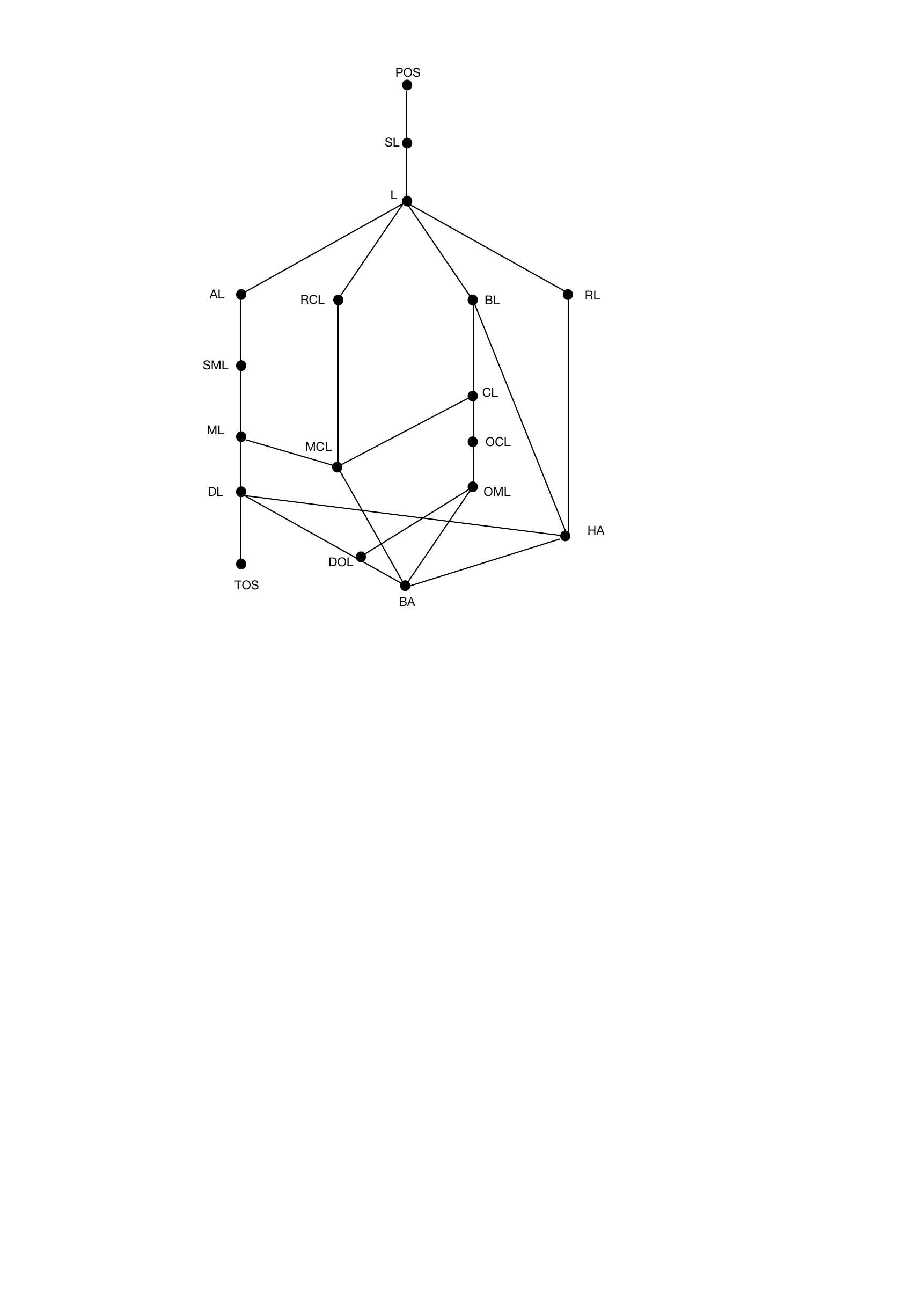} 
	      \vspace{-27\baselineskip}
	   \caption{A graph of some of the main lattices}
	   \label{fig: treillis55}
	\end{figure}

\section{Uniquely complemented nondistributive lattices}

Now if we are looking for a nondistributive lattice in which every element has a unique complement, that is not easy to point out. However, we can get some informations from above:

\subsection{First properties}

\begin{prop}
 A nondistributive lattice uniquely complemented is necessarily non atomic.
\end{prop}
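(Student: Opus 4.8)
The plan is to read this proposition as the contrapositive of the theorem recalled above, namely that every uniquely complemented atomic lattice is distributive. Since that theorem is already at our disposal, the argument is very short and needs no new machinery.

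In detail, I would proceed as follows. Let $L$ be uniquely complemented and nondistributive. Being complemented, $L$ is in particular bounded, so it has a least element $0$ and the notion of atom makes sense in $L$. Suppose, for contradiction, that $L$ is atomic. Then $L$ satisfies both hypotheses of the quoted theorem — it is uniquely complemented and it is atomic — so $L$ is distributive, contradicting the hypothesis. Hence $L$ cannot be atomic, which is exactly the assertion of the proposition.

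What I want to stress is that there is no genuine obstacle at the level of this proposition: all the difficulty is absorbed into the cited theorem, and the proof here is essentially a one-line application of it. If one insisted on a self-contained argument one would have to reconstruct the proof of that theorem — showing that in a uniquely complemented atomic lattice the complementation is order-reversing on atoms and dual atoms, that every element is the join of the atoms below it, and then deriving the two distributive identities, or equivalently ruling out the existence of any sublattice isomorphic to $M_{3}$ or $N_{5}$ via the forbidden-sublattice criterion quoted above. But for the present purpose — situating uniquely complemented nondistributive lattices among the lattices introduced earlier — the contrapositive suffices, and it already records a first structural constraint on such lattices: each of them must contain a nonzero element below which there is no atom.
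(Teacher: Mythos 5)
Your argument is correct and follows exactly the route the paper takes: it invokes the Birkhoff--Ward theorem (every uniquely complemented atomic lattice is distributive) and derives the proposition by contradiction, assuming $L$ atomic and concluding it would be distributive. The paper's own proof is the same one-line contrapositive application of that theorem, so there is nothing to add.
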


\begin{proof}
According to theorem 4.5, every uniquely complemented atomic lattice is distributive. So we can deduce that a nondistributive lattice is non atomic or non uniquely complemented (or both). But suppose now $L$, a nondistributive uniquely complemented lattice. If it were atomic, it would exists an atomic uniquely complemented lattice which would be nondistributive, a contradiction. So $L$ is necessarily non atomic. 
\end{proof}

We also know two other theorems concerning the subject:

\begin{thm}[\cite{Bly}]
Every uniquely complemented lattice of finite width is distributive.
\end{thm}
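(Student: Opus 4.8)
The plan is to reduce the statement to Theorem~4.5 by proving that a uniquely complemented lattice of finite width must be atomic. So I would argue by contradiction: suppose $L$ is uniquely complemented, has finite width $w$, and yet is not atomic. Then there is an element $a_0 > 0$ having no atom below it. Such an $a_0$ cannot cover $0$, so I may choose $a_1$ with $0 < a_1 < a_0$; no atom lies below $a_1$ either (it would lie below $a_0$), so $a_1$ does not cover $0$ and I may continue. Iterating, I obtain an infinite strictly descending chain $a_0 > a_1 > a_2 > \cdots > 0$ in $L$.

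From this chain I would manufacture an infinite antichain, which is the heart of the matter. For each $i \geq 0$ set $c_i := a_{i+1}' \wedge a_i$. The essential point is that $c_i > 0$: were $c_i = 0$, then $a_i \wedge a_{i+1}' = 0$ and, since $a_i \vee a_{i+1}' \geq a_{i+1} \vee a_{i+1}' = 1$, also $a_i \vee a_{i+1}' = 1$, so $a_i$ would be a complement of $a_{i+1}'$; by unique complementation this forces $a_i = (a_{i+1}')' = a_{i+1}$, contradicting $a_i > a_{i+1}$. Next, since $a_{i+1} < a_i$ one has $c_i \wedge a_{i+1} = a_{i+1}' \wedge (a_i \wedge a_{i+1}) = a_{i+1}' \wedge a_{i+1} = 0$. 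Finally, for $i < j$ we have $c_j \leq a_j \leq a_{i+1}$, hence $c_i \wedge c_j \leq c_i \wedge a_{i+1} = 0$; as $c_i, c_j > 0$, neither of them can lie below the other. Thus $\{c_i : i \geq 0\}$ is an infinite antichain in $L$, contradicting $\mathrm{width}(L)=w$. Consequently $L$ is atomic, and Theorem~4.5 gives that $L$ is distributive.

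The argument is short, and the only step that is not a routine lattice computation is the verification that each $c_i$ is nonzero — this is precisely where \emph{uniqueness} of complements (and not merely complementedness) is used, since from a non-unique complement $b_i$ of $a_{i+1}$ one could not conclude $a_i = a_{i+1}$ merely because $a_i$ is also a complement of $b_i$. I would also remark that the construction gives slightly more than stated: a uniquely complemented lattice that fails to be atomic automatically has infinite width, so combining this with Proposition~7.1 one recovers that every uniquely complemented nondistributive lattice has infinite width.
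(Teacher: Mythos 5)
Your proof is correct, and it is worth noting that the paper itself gives no argument for this statement: Theorem 7.2 is simply quoted from Blyth and then used to infer Proposition 7.3, so your write-up supplies a proof where the paper has only a citation. The reduction is sound: if $L$ is not atomic you do get an infinite chain $a_0>a_1>\cdots>0$; each $c_i=a_{i+1}'\wedge a_i$ is nonzero, since $c_i=0$ together with $a_i\vee a_{i+1}'\ge a_{i+1}\vee a_{i+1}'=1$ would make $a_i$ a second complement of $a_{i+1}'$ and force $a_i=(a_{i+1}')'=a_{i+1}$ --- you have correctly isolated the one place where uniqueness, rather than mere complementedness, is used; and $c_i\wedge c_j=0$ with $c_i,c_j>0$ does make the $c_i$ pairwise incomparable, so $L$ would have infinite width. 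One dependency deserves a remark: you invoke Theorem 4.5 in the completeness-free form stated in the paper (``every uniquely complemented atomic lattice is distributive''), whereas Section 2 describes the Birkhoff--Ward result with completeness hypotheses. This is not a gap: in an atomic complemented lattice the only upper bound of the set of atoms is $1$ (an upper bound $u<1$ has a nonzero complement, which contains an atom $p$, and then $p\le u\wedge u'=0$), so Theorem 7.10 already delivers the form of Theorem 4.5 that you use. Finally, your closing observation is accurate: the same antichain construction shows directly that a non-atomic uniquely complemented lattice has infinite width, which yields Proposition 7.3 and slightly sharpens the paper's own derivation of it.
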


Hence we can infer the following:

\begin{prop}
 A uniquely complemented nondistributive lattice is necessarily of infinite width.
\end{prop}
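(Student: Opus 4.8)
The plan is to argue by contraposition, in exact parallel with the proof of Proposition 8.1. Theorem 8.2 (due to Blyth) already asserts that every uniquely complemented lattice of finite width is distributive, so the logical content of Proposition 8.3 is nothing more than the contrapositive of that statement, read inside the class of uniquely complemented lattices. No new lattice-theoretic work is needed; the substantive part is entirely absorbed into the cited theorem.

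First I would let $L$ be a uniquely complemented nondistributive lattice, and suppose, for contradiction, that $L$ has finite width. Since $L$ is uniquely complemented by hypothesis, Theorem 8.2 applies to $L$ and yields that $L$ is distributive. This contradicts the assumption that $L$ is nondistributive. Hence $L$ cannot be of finite width, i.e. $L$ is necessarily of infinite width, which is the assertion of the proposition.

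The only point that calls for a little care is the one already noted in connection with Proposition 8.1: the implication ``uniquely complemented and of finite width $\Rightarrow$ distributive'' says nothing about lattices that fail to be uniquely complemented, so one must be sure to carry the hypothesis ``uniquely complemented'' through the entire argument rather than applying Theorem 8.2 to some auxiliary lattice that might lack that property. Beyond this bit of bookkeeping there is no genuine obstacle, and it is worth remarking that the conclusion is consistent with the known situation: the Dilworth-type constructions of nondistributive uniquely complemented lattices are in fact of infinite width, as Proposition 8.3 forces them to be.
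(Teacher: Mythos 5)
Your argument is correct and is essentially the paper's own: the paper derives this proposition directly as the contrapositive of Blyth's theorem that every uniquely complemented lattice of finite width is distributive, exactly as you do (only the section numbering differs; these results appear in Section 7 of the paper). Nothing further is needed.
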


We also have:

\begin{thm}[\cite{Neu}]
Every uniquely complemented modular lattice is distributive.
\end{thm}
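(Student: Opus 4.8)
The plan is to reduce the statement to Birkhoff's forbidden-sublattice criterion (Theorem 4.6). Since a uniquely complemented lattice $L$ is in particular complemented, and since modularity passes to sublattices while $N_{5}$ is not modular (the relevant computation being the one carried out in Example 1), $L$ can contain no sublattice isomorphic to $N_{5}$; so it suffices to show that $L$ contains no sublattice isomorphic to $M_{3}$. The conceptual point — and the only real obstacle — is that an $M_{3}$-sublattice $\{u,p,q,r,v\}$ sits inside the interval $[u,v]$ with its \emph{own} bounds, whereas unique complementation only controls complements taken with respect to the global $0$ and $1$. So the strategy is: first turn unique complementation into a usable order-theoretic statement, then transport a hypothetical $M_{3}$ into an interval of the form $[0,z]$, where that statement bites.

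For the first step I would prove an auxiliary fact about any complemented modular lattice: if $a\wedge b=0$ then $a$ has a complement above $b$, and dually if $a\vee b=1$ then $a$ has a complement below $b$. For the first, take any complement $m'$ of $m:=a\vee b$ and check, by two applications of the modular law, that $c:=b\vee m'$ satisfies $c\vee a=1$ and $c\wedge a=0$. Since in our $L$ complements are unique, this at once gives the implications
\[
a\wedge b=0\ \Longrightarrow\ b\le a',\qquad a\vee b=1\ \Longrightarrow\ a'\le b,
\]
which in particular make $'$ order-reversing; write $(\mathrm H)$ for the first of these (it is precisely Huntington's supplementary postulate quoted in Section 2).

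For the transport step I would use two one-line isomorphisms valid in any modular lattice possessing a complemented pair: if $v\wedge v'=0$ and $v\vee v'=1$, then $x\mapsto x\vee v'$ is an isomorphism $[0,v]\to[v',1]$ with inverse $y\mapsto y\wedge v$; dually, if $u\wedge u'=0$ and $u\vee u'=1$, then $y\mapsto y\wedge u'$ is an isomorphism $[u,1]\to[0,u']$ with inverse $z\mapsto z\vee u$. Each verification is a single use of the modular law in each direction. Given a putative $M_{3}$-sublattice $\{u,p,q,r,v\}$ of $L$, apply the first isomorphism with $v'$ the complement of $v$: this carries it to an $M_{3}$ whose top is $1$. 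Then apply the second isomorphism with $u_{1}'$ the complement of the new bottom $u_{1}$: this carries it to an $M_{3}$ of the form $\{0,p_{2},q_{2},r_{2},z\}$ lying in $[0,z]$, with $z=u_{1}'$ and $0<p_{2},q_{2},r_{2}<z$. Now $(\mathrm H)$ closes the argument: from $p_{2}\wedge q_{2}=0$ and $p_{2}\wedge r_{2}=0$ we get $q_{2}\le p_{2}'$ and $r_{2}\le p_{2}'$, hence $z=q_{2}\vee r_{2}\le p_{2}'$; but $p_{2}\le z$ as well, so $p_{2}\le p_{2}'$ and therefore $p_{2}=p_{2}\wedge p_{2}'=0$, contradicting $0<p_{2}$. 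Thus $L$ has no $M_{3}$-sublattice, and by Theorem 4.6 it is distributive.

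The hard part is not any single computation — the auxiliary lemma and the two interval isomorphisms are routine modular-law juggling, and the $N_{5}$ case is handled for free — but the realisation that unique complementation says nothing useful about an $M_{3}$ until the $M_{3}$ has been slid down to rest on $0$ (having first been slid up to $1$), together with the choice of the two modular isomorphisms that perform these slides. One could shorten the middle of the argument by invoking instead that a modular complemented lattice is relatively complemented together with the fact (alluded to in the Introduction) that a uniquely complemented relatively complemented lattice is Boolean; but the route above stays inside the results explicitly quoted in this paper.
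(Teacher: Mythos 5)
Your argument is correct, and it cannot be compared step-by-step with the paper, because the paper offers no proof at all: Theorem 7.3 is simply attributed to von Neumann's lectures on continuous geometries, and Section 4 announces that such classical results are quoted without proof. Your route is a legitimate, self-contained elementary derivation from results the paper does state. The key verifications all check out: in the auxiliary lemma, with $m=a\vee b$, $m'$ any complement of $m$ and $c=b\vee m'$, one has $c\vee a=m\vee m'=1$ and $(b\vee m')\wedge a\le (b\vee m')\wedge m=b\vee(m'\wedge m)=b$ by the modular law, hence $(b\vee m')\wedge a\le a\wedge b=0$; uniqueness of complements then turns this into Huntington's condition $a\wedge b=0\Rightarrow b\le a'$, exactly the hypothesis Huntington had shown forces Booleanness (Section 2). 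Your two transport maps are the standard diamond (transposition) isomorphisms $[v\wedge v',v]\cong[v',v\vee v']$ and $[u_1,u_1\vee u_1']\cong[u_1\wedge u_1',u_1']$ of modular lattices, so a hypothetical $M_3$ is indeed carried to a copy $\{0,p_2,q_2,r_2,z\}$ resting on $0$, where the Huntington condition yields $z=q_2\vee r_2\le p_2'$, hence $p_2\le p_2'$ and $p_2=p_2\wedge p_2'=0$, a contradiction; with $N_5$ excluded by modularity, Theorem 4.6 gives distributivity. Your closing remark also identifies the other classical path — a complemented modular lattice is relatively complemented (item 7 of Section 6), and a uniquely complemented relatively complemented bounded lattice is Boolean — which is closer to how the result is usually quoted (e.g. in the Birkhoff--von Neumann tradition the paper invokes); your version trades that second cited fact for an explicit sublattice-exclusion argument, which keeps everything inside the statements actually displayed in the paper at the cost of the two interval isomorphisms.
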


Hence, we get:

\begin{prop}
A uniquely complemented nondistributive lattice is not modular.
\end{prop}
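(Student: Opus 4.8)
The plan is to obtain the statement as the contrapositive of Theorem 7.4, in exactly the same way that Proposition 7.1 was deduced from Theorem 4.5 and Proposition 7.3 from Theorem 7.2. So the argument will be purely deductive, with no lattice-theoretic computation of its own; all the content is already packaged in the cited result of \cite{Neu}.

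First I would argue by contradiction: assume $L$ is a uniquely complemented nondistributive lattice and suppose, towards a contradiction, that $L$ is modular. Then $L$ is simultaneously uniquely complemented and modular, so Theorem 7.4 applies and yields that $L$ is distributive. This contradicts the hypothesis that $L$ is nondistributive. Hence the assumption that $L$ is modular is untenable, and $L$ is not modular, which is what we want.

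Equivalently, and perhaps more cleanly, I would phrase it as a direct contrapositive of Theorem 7.4: that theorem says \emph{uniquely complemented} $+$ \emph{modular} $\Rightarrow$ \emph{distributive}; transposing, for a uniquely complemented lattice, \emph{not distributive} $\Rightarrow$ \emph{not modular}. Applying this to our $L$ (which is uniquely complemented and nondistributive by assumption) gives immediately that $L$ is not modular. One might add the remark that this also re-proves, via Theorem 5.1 (distributive $\Rightarrow$ modular), the consistency of the earlier observation that such an $L$ cannot be distributive.

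There is essentially no obstacle here: the only ``hard part'' is the theorem being invoked, Neumann's result that every uniquely complemented modular lattice is distributive, and we are permitted to assume it. The remaining care is purely expository — making sure the logical direction of the contrapositive is stated correctly and that the hypotheses of Theorem 7.4 (unique complementation \emph{and} modularity) are both genuinely in force before the theorem is applied.
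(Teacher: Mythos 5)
Your proposal is correct and matches the paper exactly: the paper derives this proposition as an immediate consequence (contrapositive) of the von Neumann result that every uniquely complemented modular lattice is distributive, with no further argument. Your contradiction/contrapositive phrasing is just an explicit write-up of that same one-line deduction.
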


We can also recall the following theorem from Sali\u{\i} (see \cite{Sal1}):

\begin{thm}[\cite{Sal1}]
A compactly generated lattice with unique complement is distributive.
\end{thm}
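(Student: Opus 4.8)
The plan is to reduce the statement to Theorem~5.5, which asserts that every uniquely complemented \emph{atomic} lattice is distributive (the contrapositive being recorded in Proposition~7.1). Recall that a compactly generated lattice is a complete lattice $L$ in which each element is the join of the compact elements it dominates, and that every such lattice is \emph{upper-continuous}, i.e. $x\wedge\bigvee S=\bigvee_{s\in S}(x\wedge s)$ for every up-directed $S\subseteq L$. So it suffices to prove that a compactly generated uniquely complemented lattice $L$ is atomic: for then Theorem~5.5 gives distributivity of $L$ at once, and since $L$ is also complemented and bounded it is in fact a Boolean algebra.

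Towards atomicity I would first record that $L$, being compactly generated, is \emph{weakly atomic}: between any $u<v$ there sits a covering pair $u\le p\prec q\le v$ (that is, $p<q$ with nothing strictly in between). This is a Zorn argument — pick a compact $c\le v$ with $c\not\le u$; the family $\{x:\ u\le x\le u\vee c,\ c\not\le x\}$ contains $u$ and is closed under joins of chains (compactness of $c$ makes $c\not\le\bigvee^{\uparrow}x_i$ whenever each $c\not\le x_i$), hence has a maximal element $p$, and then $p\prec p\vee c$. Now fix $0<a$, choose a compact $c$ with $0<c\le a$, and apply weak atomicity inside $[0,c]$ to obtain a covering pair $p\prec q\le c$. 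The decisive task is then to \emph{slide this covering pair down to an atom below $c$}; the only leverage available is unique complementation, and the idea is to use the bijection $x\mapsto x'$, together with the compactness of $c$ and upper-continuity, to show that the section of $L$ lying above $c'$ — that is, the set of complements of the elements of $[0,c]$ — possesses a largest proper element, whose complement is then an atom of $L$ below $c$.

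I expect this last move to be the real obstacle, and it is the only place where the compact-generation hypothesis is genuinely needed. In an arbitrary uniquely complemented lattice the complementation $x\mapsto x'$ is neither order-reversing nor subject to any De~Morgan law, and it interacts pathologically with infinite joins and meets — in particular $[0,c]$ need not even be a complemented sublattice — so none of the usual interval, quotient, or "an element maximal with $x\wedge c=0$ is a complement of $c$" arguments apply (the last of these tacitly uses modularity). What rescues the argument is that compactness of $c$ forces the joins occurring in the Zorn/maximality analysis to be finitely controlled, and upper-continuity then lets those joins be commuted past meets with $c$; organizing this so that applying $'$ to the extremal element produced really returns an atom-type element is the technical core. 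The tempting shortcuts fail: one cannot reduce to finite width and invoke Theorem~7.2, since $\mathcal{P}(\mathbb{N})$ is itself compactly generated and of infinite width; and trying to prove directly that such an $L$ is modular, so as to apply Theorem~7.4, is no easier than the original problem, Proposition~7.3 being merely the contrapositive.
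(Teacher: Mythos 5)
The paper itself states this theorem without proof (it is quoted from Sali\u{\i}'s 1972 note), so there is no internal argument to compare with; judged on its own terms, your proposal has a genuine gap, and you name it yourself: the passage from a covering pair $p \prec q \le c$ to an atom below $c$ is never carried out. Everything before that point is correct but standard and does not use unique complementation at all --- compactly generated lattices are upper continuous and weakly atomic, and your Zorn argument is fine --- yet these facts also hold in, say, the ideal lattice of an atomless Boolean algebra, which is compactly generated and has no atoms whatsoever. So the entire content of the theorem is concentrated in the step you leave open, and the hints you offer for it do not hold up: complements of elements of $[0,c]$ need not lie above $c'$, the map $x \mapsto x'$ is not antitone (by the paper's Theorem 7.15, antitonicity of the complementation already forces distributivity, so assuming anything of that kind is circular), and there is no reason why the set you describe should have a largest proper element, nor why the complement of such an element would be an atom below $c$. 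The covering-pair-to-atom inference is precisely what fails in general uniquely complemented lattices: the paper's Theorem 7.22 concludes that $b \wedge a'$ is an atom when $a \prec b$ only under the extra UCC hypothesis of comparable complements; with unique complements alone one gets that $r = q \wedge p'$ is nonzero and that $p \vee s = q$ for every $0 < s \le r$, which does not make $r$ an atom.

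The reduction itself (atomicity plus Theorem 4.5 --- note the numbering, it is 4.5, not 5.5 --- gives distributivity, hence Booleanness) is legitimate, and atomicity is indeed true a posteriori, since a compactly generated Boolean algebra must be atomic. But proving atomicity directly from compact generation and unique complementation is essentially as hard as the theorem itself; the published treatments (Sali\u{\i}'s note and his book, reflected in the paper's Theorems 7.15 and 7.20) instead derive distributivity from criteria such as the antitonicity quasi-identity or the presence of a nonzero regular element under every nonzero element, and use compactness/upper continuity to verify such a criterion rather than to manufacture atoms. As written, your argument establishes only weak atomicity, which is strictly weaker than what your own plan requires, so the proof is incomplete at its decisive step.
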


\begin{prop}
A uniquely complemented nondistributive lattice cannot be compactly generated.
\end{prop}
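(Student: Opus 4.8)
The plan is to argue by contradiction, invoking Sali\u{\i}'s theorem (Theorem 7.6) in exactly the same way that Theorem 7.2 was used for Proposition 7.3, and Theorem 7.4 for Proposition 7.5. The only preliminary point worth a remark is that the hypothesis of Theorem 7.6 is phrased for a lattice ``with unique complement'', and we should check that this condition is weaker than --- hence implied by --- being a uniquely complemented lattice in the sense of Definition 3.13.

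First I would note that in a uniquely complemented lattice every element has a complement and that complement is unique; in particular no element admits two distinct complements, so the hypothesis ``with unique complement'' of Theorem 7.6 holds a fortiori (our hypothesis is in fact strictly stronger, since it also requires \emph{every} element to be complemented). Now suppose, for contradiction, that $L$ is a uniquely complemented nondistributive lattice that happens to be compactly generated. Then $L$ is a compactly generated lattice with unique complement, so Theorem 7.6 applies and forces $L$ to be distributive --- contradicting the assumption that $L$ is nondistributive. Hence no such $L$ exists, which is the assertion.

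There is no genuine computational content to overcome here: the entire weight of the result rests on Sali\u{\i}'s theorem, which we use as a black box, and the deduction is the routine contraposition repeated throughout this section. If anything, the one place where care is needed is terminological --- making sure that ``compactly generated'' is taken in the same sense in the statement and in the cited source, i.e.\ a complete lattice each of whose elements is a join of compact elements (equivalently, an algebraic lattice) --- but once these conventions are pinned down the proposition is immediate. One could add the remark that, since a compactly generated lattice is complete, this proposition yields a partial negative answer to the third of Gr\"{a}tzer's questions: a nondistributive uniquely complemented lattice, even if complete, cannot be algebraic.
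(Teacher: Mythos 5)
Your argument is correct and is essentially the paper's own (implicit) proof: Proposition 7.7 is stated there as the immediate contrapositive of Sali\u{\i}'s Theorem 7.6, exactly as you argue by contradiction. Your terminological check and the closing remark about completeness are harmless additions but not needed.
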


By propositions 7.1, 7.3, 7.5 and 7.7 we can deduce:

\begin{prop}
A uniquely complemented nondistributive lattice is necessarily non atomic, non modular, non compactly generated, and of infinite width.
\end{prop}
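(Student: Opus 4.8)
The plan is simply to observe that this proposition is nothing more than the conjunction of four statements already established in this section, so the argument consists of assembling Propositions 7.1, 7.3, 7.5 and 7.7. First I would invoke Proposition 7.1 --- which itself rests on Theorem 4.5 (every uniquely complemented atomic lattice is distributive) --- to obtain that the lattice cannot be atomic. Next, Proposition 7.3, a consequence of the finite-width theorem of Blyth (Theorem 7.2), yields that the lattice must be of infinite width. Then Proposition 7.5, deduced from von Neumann's theorem that every uniquely complemented modular lattice is distributive (Theorem 7.4), gives non-modularity. Finally Proposition 7.7, obtained from Sali\u{\i}'s theorem (Theorem 7.6) that a compactly generated lattice with unique complement is distributive, rules out compact generation.

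Conjoining these four conclusions gives exactly the claimed statement. The only point worth a brief remark is that the four negative properties are logically independent of one another --- being atomic, of finite width, modular, or compactly generated are not mutually derivable for general lattices --- so none of the four invocations is redundant and all are needed to obtain the full list. There is no genuine obstacle here: all the substantive work was carried out in Section 4 and in the earlier part of Section 7, and the present proposition should be read as a summary corollary that collects those results into a single statement, convenient as a starting point for the construction discussed in the sequel.
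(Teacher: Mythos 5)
Your proof matches the paper's own argument: the paper deduces this proposition precisely by conjoining Propositions 7.1, 7.3, 7.5 and 7.7, which in turn rest on Theorems 4.5, 7.2, 7.4 and 7.6 as you indicate. Your added remark on the logical independence of the four conditions is harmless but not needed; otherwise the approach is the same and correct.
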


As Sali\u{\i} observes, if to the requirement for unique complements, we add the condition that the lattice is modular or that the Morgan laws hold, or that the lattice is atomic, in all these cases, we find that the lattice operations are distributive (see \cite{Sal2}). Therefore, it is not surprising that,  in other circumstances, we get negative results: a uniquely complemented nondistributive lattice, of course, is none of that.

\subsection{Sali\u{\i}'s properties of distributive and nondistributive complemented Lattices}

Reading pages 39-57 of \cite{Sal2}, allows us to further detail the properties of distributive lattices and - by contrast - nondistributive complemented lattices in the following way:

Recall first what is the ascending (resp. descending) chain condition\footnote{The ascending chain condition (ACC) and descending chain condition (DCC) are finiteness properties 
satisfied by some algebraic structures, especially ideals in certain commutative rings. These conditions played an important role in the development of the structure theory of commutative rings in the works of David Hilbert, Emmy Noether, and Emil Artin. But the conditions themselves can be stated in an abstract form, so that they make sense for any partially ordered set. A poset which satisfies both the ascending and descending chain condition is said to be well-founded and converse well-founded.} in a partially ordered set.

\begin{dfn}[ACC]
A partially ordered set (poset) $P$ is said to satisfy the ascending chain condition (ACC) if no infinite strictly ascending sequence:
\[
a_{1} < a_{2} < a_{3} < ...
\]
of elements of $P$ exists. Equivalently, every weakly ascending sequence:
\[
a_{1} \leq a_{2} \leq a_{3} \leq ...
\]
of elements of $P$ eventually stabilizes, meaning that there exists a positive integer $n$ such that:
\[
a_{n} = a_{n+1} = a_{n+2} = ...\ .
\]
\end{dfn}

\begin{dfn}[DCC]
Similarly, $P$ is said to satisfy the descending chain condition (DCC) if there is no infinite descending chain of elements of $P$.
 Equivalently, every weakly descending sequence:
\[
a_{1} \ge a_{2} \ge a_{3} \ge ...
\]
of elements of $P$ eventually stabilizes.
\end{dfn}

Sali\u{\i} proves the following theorems:

\begin{thm}
If a uniquely complemented lattice (UCL) satisfies the descending chain condition or the ascending chain condition, then it is distributive.
\end{thm}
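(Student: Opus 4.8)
The plan is to treat the two hypotheses separately and, in each case, reduce the statement to Theorem 4.5 (every uniquely complemented atomic lattice is distributive). First I would dispose of the descending chain condition. Since a uniquely complemented lattice is in particular complemented, it is bounded, so it has a least element $0$ and a greatest element $1$. I claim that the DCC forces $L$ to be atomic: given any $x \in L$ with $x > 0$, the set $S = \{\,y \in L : 0 < y \le x\,\}$ is nonempty (it contains $x$), and by the DCC it has a minimal element $a$; minimality means there is no $z$ with $0 < z < a$, i.e. $a$ is an atom, and $a \le x$. Hence $L$ is atomic, and Theorem 4.5 gives that $L$ is distributive.

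For the ascending chain condition I would argue by order-duality. The two conditions that define a complement, $x \wedge y = 0$ and $x \vee y = 1$, are merely interchanged — not otherwise altered — when one passes to the dual lattice $L^{\partial}$; consequently $y$ is a complement of $x$ in $L$ if and only if it is a complement of $x$ in $L^{\partial}$, so $L^{\partial}$ is again uniquely complemented, with the very same complementation map. Moreover the ACC in $L$ is exactly the DCC in $L^{\partial}$, and distributivity is a self-dual property (as is clear from Definition 3.6, whose two displayed identities are dual to one another and, in the presence of lattice structure, equivalent). Therefore, if $L$ is uniquely complemented and satisfies the ACC, then $L^{\partial}$ is uniquely complemented and satisfies the DCC, so by the case already settled $L^{\partial}$ is distributive, and hence so is $L$.

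The only point requiring any care is the step in the DCC case where I extract an atom $a$ from $S$: this uses the standard equivalence between ``$L$ has no infinite strictly descending chain'' and ``every nonempty subset of $L$ has a minimal element.'' I would simply invoke this characterization of the DCC; it is not really an obstacle. As an alternative route for the ACC case one could avoid duality and instead observe that a lattice with the ACC has every element compact, hence (being, in particular, complete under the ACC) is compactly generated, and then apply Theorem 7.9; but the duality argument combined with Theorem 4.5 is shorter and more self-contained, so that is the version I would present.
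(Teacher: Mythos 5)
The paper does not actually prove this statement: it is quoted (Section 7.2) as one of Sali\u{\i}'s theorems, presented without proof, so there is no in-paper argument to compare yours against; I can only judge your proposal on its own terms, and it is correct. The DCC half is exactly right: a uniquely complemented lattice is complemented, hence bounded, and the standard equivalence of ``no infinite strictly descending chain'' with ``every nonempty subset has a minimal element'' lets you pick a minimal element of $\{\,y : 0 < y \le x\,\}$, which is an atom below $x$; thus $L$ is atomic and Theorem 4.5 applies. The ACC half by duality is also sound, since the two conditions defining a complement are interchanged under order-dualization, so $L^{\partial}$ is uniquely complemented, ACC in $L$ is DCC in $L^{\partial}$, and distributivity is self-dual (your pointer should be to Definition 3.7 rather than 3.6, a cosmetic slip). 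Two remarks that strengthen rather than undermine your argument: first, if one worries that the Birkhoff--Ward result behind Theorem 4.5 is usually stated for \emph{complete} atomic uniquely complemented lattices, note that a bounded lattice with the DCC (or ACC) is automatically complete, so even the weaker form suffices; second, your alternative route for the ACC case via compactness is also valid, precisely because ACC plus a bottom element gives completeness and makes every element compact, so Sali\u{\i}'s compactly-generated theorem (Theorem 7.9 here) applies --- but the duality reduction you chose is indeed the shorter and more self-contained option.
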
 

\begin{thm}
If the largest element of a UCL is the least upper bound of its sets of atoms, then the lattice is distributive.
\end{thm}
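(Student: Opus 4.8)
The plan is to reduce the statement to Theorem~4.5 (every uniquely complemented atomic lattice is distributive) by showing that the hypothesis already forces $L$ to be atomic. Write $A$ for the set of atoms, so the assumption reads $\bigvee A=1$, and write $x'$ for the unique complement of $x$. Two elementary remarks about a uniquely complemented lattice are the only tools needed. First, $x\mapsto x'$ is an involutive bijection of $L$: if $a'=b'$ then $a$ and $b$ are both complements of the element $a'$, hence $a=b$. Second, one has the cancellation rule ``$u\le v$ and $u\vee v'=1$ imply $u=v$'' --- indeed $u\wedge v'\le v\wedge v'=0$, so $u$ is a complement of $v'$ and therefore equals $(v')'=v$ --- together with its order dual ``$u\ge v$ and $u\wedge v'=0$ imply $u=v$''. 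Note that De Morgan's laws and the order-reversal of $'$ are \emph{not} available here; that is precisely what makes uniquely complemented lattices delicate.

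First I would carry out the reduction. Let $x\in L$ with $x\ne 0$; we must exhibit an atom below $x$. Since $x\wedge x'=0$ and $x\ne 0$, we have $x'\ne 1$. If every atom were $\le x'$, then $1=\bigvee A\le x'$ would give $x'=1$, hence $x=x\wedge x'=0$, a contradiction; so there is an atom $p$ with $p\not\le x'$, and, $p$ being an atom, this means $p\wedge x'=0$. Everything then comes down to the \textbf{Key Lemma}: in a uniquely complemented lattice, for every $x$ and every atom $p$ one has $p\le x$ or $p\le x'$. Granting it, $p\wedge x'=0$ forces $p\le x$, so $L$ is atomic and Theorem~4.5 concludes; equivalently, if no atom lay below $x$, the Lemma would place every atom below $x'$, whence $x'=1$ and $x=0$.

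The hard part is the Key Lemma, whose content is a very weak fragment of distributivity. The obvious computation $p=p\wedge(x\vee x')=(p\wedge x)\vee(p\wedge x')$ is illegitimate, and it genuinely fails in $M_{3}$ --- but $M_{3}$ is not uniquely complemented, and it is exactly uniqueness of complements that must be exploited. I would argue by contradiction, assuming $p\wedge x=p\wedge x'=0$; then $x\not\le p$ and $x'\not\le p$, so with $u=p\vee x$, $v=p\vee x'$ one gets $p<u$, $p<v$, $x<u$, $x'<v$ and $u\vee v=1$. Applying the dual cancellation rule shows $x\wedge v\ne 0$ and $x'\wedge u\ne 0$ (otherwise $v=x'$ or $u=x$), and applying the cancellation rules to the comparable pairs $x\le u$, $x'\le v$, $p\le u$, $p\le v$ and to the complements $u',v'$ produces a network of strict inequalities of the form $(\,\cdot\vee\cdot'\,)<1$. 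I expect this network, once the involutive bijection is brought to bear on the set $\{x,x',p,u,v,u',v'\}$ and on the auxiliary elements $x\wedge v$, $x'\wedge u$ it generates, to be incompatible with $u\vee v=1$. Making this clash precise --- as opposed to merely producing an $M_{3}$ or $N_{5}$ sublattice, which by itself says nothing about the \emph{global} complementation of $L$ --- is the crux of the argument; an alternative would be to transcribe Sali\u{\i}'s proof of the chain-condition theorem (Theorem~7.12 above), substituting the atom-join hypothesis for the use of ACC/DCC.
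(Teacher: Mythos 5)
The paper itself states this theorem without proof --- it is simply quoted from Sali\u{\i}'s monograph --- so there is no in-paper argument to measure you against, and I judge the proposal on its own terms. Your reduction is sound as far as it goes: the involutivity of $x\mapsto x'$ and the two cancellation rules are correct, and, \emph{granted} your Key Lemma, the hypothesis $1=\bigvee A$ does force $L$ to be atomic (if no atom lay below $x\neq 0$, every atom would lie below $x'$, so $x'=1$ and $x=0$), after which Theorem 4.5 finishes the argument.

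The genuine gap is the Key Lemma itself, and it is not a technical loose end but the entire mathematical content of the theorem. The assertion ``$p\wedge x=0$ implies $p\le x'$ for every atom $p$'' is exactly Huntington's condition $x\wedge y=0\Rightarrow y\le x'$ restricted to atoms, i.e.\ a local fragment of distributivity; the moral of Dilworth's theorem (Theorem 7.26), and of the fact that his uniquely complemented lattices contain free lattices on countably many generators, is that unique complementation alone imposes no such fragment, so a claim of this kind for \emph{arbitrary} uniquely complemented lattices cannot be taken on faith. Note also that your attempted proof of the Key Lemma never uses the hypothesis that $1$ is the join of the atoms, so you are asserting something substantially more general than the theorem, and the sketch does not substantiate it: the steps you actually verify ($u>x$, $v>x'$, $x\wedge v\neq 0$, $x'\wedge u\neq 0$) are correct but inert, and ``a network of strict inequalities that I expect to be incompatible with $u\vee v=1$'' is a hope, not a derivation. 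As a calibration of what your tools do reach: they give, for instance, that the complement of an atom is a coatom (if $p'\le b<1$ then $p\not\le b$, hence $p\wedge b=0$ and $p\vee b\ge p\vee p'=1$, so $b=p'$ by uniqueness), but nothing approaching the Key Lemma. To close the gap you must either prove the Key Lemma \emph{using} the atom-join hypothesis --- for example by routing through one of Sali\u{\i}'s equivalent criteria quoted in this section (Theorem 7.12 on homomorphisms onto $\{0,1\}$, or Theorem 7.21 on regular elements) --- or else transcribe Sali\u{\i}'s own argument, as you mention in passing but do not carry out; as written, the crux of the theorem is conjectured rather than proved.
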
 

\begin{thm}
If in a uniquely complemented lattice there exists for each nonzero element a prime ideal that does not contain it, then this lattice is distributive.
\end{thm}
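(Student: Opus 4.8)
The plan is to realize $L$ as a sublattice of a power of the two‑element lattice $\mathbf{2}=\{0,1\}$; since powers of $\mathbf{2}$ are distributive and sublattices inherit distributivity, this will be enough. Let $\mathcal{P}$ be the set of proper prime ideals of $L$. For $P\in\mathcal{P}$ the indicator map $h_P:L\to\mathbf{2}$, $h_P(x)=0$ if $x\in P$ and $h_P(x)=1$ otherwise, is a $\{0,1\}$‑lattice homomorphism: primeness of $P$ gives $h_P(x\wedge y)=h_P(x)\wedge h_P(y)$, the ideal property gives $h_P(x\vee y)=h_P(x)\vee h_P(y)$, and $0\in P$, $1\notin P$ for every proper prime ideal. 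Assembling these yields a $\{0,1\}$‑homomorphism $h:L\to\mathbf{2}^{\mathcal{P}}$, $h(x)=(h_P(x))_{P\in\mathcal{P}}$, and the hypothesis of the theorem says exactly that $h(a)=0$ implies $a=0$, i.e. $h^{-1}(0)=\{0\}$.

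Next I would use unique complementation together with the Boolean structure of the codomain. Since $\mathbf{2}^{\mathcal{P}}$ is a Boolean algebra and $h$ preserves $0$, $1$ and both operations, $h(x)\wedge h(x')=h(0)=0$ and $h(x)\vee h(x')=h(1)=1$, so $h(x')$ is the (Boolean) complement $h(x)^{c}$ of $h(x)$. Now I claim $h$ is injective. Suppose $h(x)=h(y)$ and set $u=x\wedge y\le v=x\vee y$; then $h(u)=h(x)\wedge h(y)=h(x)=h(x)\vee h(y)=h(v)$, so it suffices to show $u=v$, because $x\wedge y=x\vee y$ forces $x=y$. Let $u'$ be the complement of $u$ in $L$. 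Because $u\le v$ we get $v\vee u'\ge u\vee u'=1$, hence $v\vee u'=1$; and $h(v\wedge u')=h(v)\wedge h(u')=h(u)\wedge h(u)^{c}=0$, so $v\wedge u'\in h^{-1}(0)=\{0\}$, i.e. $v\wedge u'=0$. Thus $u'$ is a complement of $v$, and by uniqueness of complements $u'=v'$; since complementation is an involution, $u=(u')'=(v')'=v$. Hence $h$ is an injective lattice homomorphism into the distributive lattice $\mathbf{2}^{\mathcal{P}}$, so $L$ is distributive.

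I expect the main obstacle to be the passage to a comparable pair $u\le v$. Applied directly to $x$ and $y$, the homomorphism $h$ only yields $x\wedge y'=0$ and $x'\wedge y=0$, with no control on the corresponding joins, so one cannot recognize a complement; replacing $x,y$ by $u=x\wedge y$ and $v=x\vee y$ is what makes the order itself supply $v\vee u'=1$ to pair with the $h$‑supplied $v\wedge u'=0$. A secondary point deserving care is that the hypothesis is genuinely one‑sided: it controls which elements lie in every prime ideal (hence $h^{-1}(0)=\{0\}$) but says nothing dual about $h^{-1}(1)$, so the argument must be arranged, as above, to invoke only the implication ``$h(z)=0\Rightarrow z=0$''. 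Finally one should dispose of the degenerate case $L=\{0\}$ (trivially distributive) separately, since it is the only situation in which $L$ has no proper prime ideal and the reformulation of the hypothesis would otherwise be vacuous.
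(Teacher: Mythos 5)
Your proof is correct. Note that the paper states this theorem without proof, attributing it to Sali\u{\i} (\cite{Sal2}), so there is no in-paper argument to compare against; judged on its own, your route is the natural one and is close in spirit to the neighbouring characterization via $\{0,1\}$-valued homomorphisms (the ``if and only if'' theorem stated right after this one). The steps all check: each proper prime ideal $P$ gives a $\{0,1\}$-homomorphism $h_P\colon L\to\mathbf{2}$, the hypothesis translates exactly into $h^{-1}(\mathbf{0})=\{0\}$ for the induced $h\colon L\to\mathbf{2}^{\mathcal{P}}$, and the reduction of injectivity to the comparable pair $u=x\wedge y\le v=x\vee y$ is precisely what lets unique complementation finish: $v\vee u'=1$ comes from the order, $v\wedge u'=0$ comes from $h(v\wedge u')=h(u)\wedge h(u')=h(u\wedge u')=0$ together with $h^{-1}(\mathbf{0})=\{0\}$, so $u'$ complements both $u$ and $v$. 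Two small points to make explicit in a final write-up: (i) the involution $(u')'=u$ is automatic in a uniquely complemented lattice because complementarity is symmetric, and in fact you can skip the detour through $u'=v'$ altogether, since $u$ and $v$ are both complements of $u'$ and uniqueness applied to $u'$ already forces $u=v$; (ii) the remark that $h(x')$ is the Boolean complement of $h(x)$ is true but unnecessary, as only $h(u)\wedge h(u')=h(u\wedge u')=0$ is used. Your separate treatment of the trivial lattice (equivalently $\mathcal{P}=\emptyset$) is appropriate, and the final step--an injective homomorphism onto a sublattice of the distributive lattice $\mathbf{2}^{\mathcal{P}}$ forces distributivity of $L$--is standard and sound.
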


\begin{thm}
A uniquely complemented lattice $L$ is distributive if and only if for each nonzero element $x$ there exists a homomorphism $\phi: L \to \{0,1\}$ onto the two-element lattice such that $\phi(x) = 1$.
\end{thm}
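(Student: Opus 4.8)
The plan is to prove the two directions separately, treating the existence of suitable $\{0,1\}$-valued homomorphisms as the bridge to distributivity. The forward direction ($L$ distributive $\Rightarrow$ the homomorphisms exist) is the easier half: a distributive uniquely complemented lattice is Boolean by Theorem~4.2, so it is bounded and every nonzero element $x$ lies below some maximal (prime) ideal not containing $x$, or equivalently above some ultrafilter; the characteristic function of that ultrafilter is the desired homomorphism $\phi$ with $\phi(x)=1$. Here I would invoke the standard prime ideal / Stone-type machinery for Boolean algebras (every proper filter extends to an ultrafilter, and ultrafilters of a Boolean algebra are exactly kernels of homomorphisms onto $\{0,1\}$), which is legitimate once distributivity plus unique complementation has collapsed $L$ into a Boolean algebra. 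One must only check the trivial cases $L=\{0\}$ and $L=\{0,1\}$ separately, where the statement is vacuous or immediate.

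For the converse ($\phi$-separation $\Rightarrow$ distributivity) the natural route is to use the condition to show that $L$ embeds into a power of the two-element chain $\{0,1\}$, hence into a Boolean lattice, hence is distributive. Concretely, let $\Phi$ be the set of all homomorphisms $L\to\{0,1\}$ and define $e\colon L\to \{0,1\}^{\Phi}$ by $e(a)=(\phi(a))_{\phi\in\Phi}$. This $e$ is a lattice homomorphism by construction; the hypothesis is exactly what forces $e$ to be injective, because if $a\neq b$ then (replacing the pair by $x=(a\wedge b)'\vee\cdots$ — more carefully, by considering a nonzero element on which $a$ and $b$ differ, using that in a complemented lattice $a\le b$ fails iff $a\wedge b'\neq 0$) there is a nonzero $x$ and a $\phi$ with $\phi(x)=1$ separating them. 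Since $\{0,1\}^{\Phi}$ is distributive and distributivity is inherited by sublattices, $L$ is distributive; the ``uniquely complemented'' hypothesis is then not even needed for this half, but it guarantees consistency with Theorem~4.2.

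The main obstacle I anticipate is the injectivity argument, specifically turning ``$\phi(x)=1$ for some $\phi$, for every nonzero $x$'' into genuine point-separation of arbitrary pairs $a\neq b$. One cannot simply apply the hypothesis to $a$ and $b$ directly; the trick is to produce from $a\neq b$ a \emph{nonzero} witness element. In a complemented lattice, $a\not\le b$ is equivalent to $a\wedge b'\neq 0$ for any complement $b'$ of $b$; applying the hypothesis to $x=a\wedge b'$ yields $\phi$ with $\phi(a)\ge\phi(a\wedge b')=1$ while $\phi(b)\le\phi(b)$ and $\phi(b\wedge b')=0$ forces $\phi(b)=0$ once one knows $\phi(b')=1$ — and here uniqueness of complements is what lets us conclude $\phi(b')=(\phi(b))'$, i.e. that $\phi$ respects complementation, which is the subtle point since a priori $\phi$ is only a lattice homomorphism. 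So the real work is the lemma: \emph{every lattice homomorphism $\phi\colon L\to\{0,1\}$ from a uniquely complemented $L$ automatically satisfies $\phi(a')=\phi(a)'$.} Once that lemma is in hand, both directions close quickly, and the theorem follows.
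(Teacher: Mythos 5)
Your forward direction is fine (distributive plus uniquely complemented gives a Boolean algebra by Theorem~4.2, and the prime ideal/ultrafilter theorem then supplies the homomorphisms), and note for calibration that the paper itself states this result without proof, as one of the theorems quoted from Sali\u{\i}'s monograph, so the converse is where all the content lies --- and that is where your sketch has a genuine gap.

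The gap is the separation step. You rest it on the claim that in a complemented lattice $a \not\leq b$ is equivalent to $a \wedge b' \neq 0$. The nontrivial half of that equivalence is false in general complemented lattices (in $M_{3}$ with atoms $a,b,c$ and $b$ a complement of $c$, one has $a \wedge b = 0$ although $a \not\leq c$), and in a \emph{uniquely} complemented lattice the implication $a \wedge b' = 0 \Rightarrow a \leq b$ is exactly Huntington's condition $x \wedge y = 0 \Rightarrow y \leq x'$, which, as recalled in Section~2, already forces $L$ to be Boolean; since Dilworth's theorem guarantees nondistributive uniquely complemented lattices, this step cannot follow from unique complementation alone and would have to be extracted from the homomorphism hypothesis --- but that extraction is precisely the hard content of the theorem, and your sketch does not supply it. So the argument is circular at its crucial point. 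Two further misjudgments compound this. The lemma you call ``the real work'' --- that any lattice homomorphism $\phi\colon L \to \{0,1\}$ sends complements to complements --- is actually trivial and needs no uniqueness: $\phi(a)\wedge\phi(a')=0$ and $\phi(a)\vee\phi(a')=1$ already determine $\phi(a')=\phi(a)'$ in the two-element lattice. And your aside that unique complementation ``is not even needed for this half'' is false: in $N_{5}$ (complemented but not uniquely complemented, with $0<a<c<1$ and $b$ incomparable to both) the prime ideals $\{0,b\}$ and $\{0,a,c\}$ provide, for every nonzero element, a homomorphism onto $\{0,1\}$ taking it to $1$, yet $N_{5}$ is nondistributive; correspondingly no homomorphism separates $a$ from $c$, so the evaluation map into $\{0,1\}^{\Phi}$ need not be injective. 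The embedding strategy is reasonable in outline, but the point-separation argument --- the place where unique complementation must genuinely enter --- is missing, and without it the converse is unproved.
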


\begin{dfn}
A lattice with 0 is called initially complemented (or sectionally complemented).
\end{dfn}

\begin{thm}
An initially complemented UC lattice is distributive.
\end{thm}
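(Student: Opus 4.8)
The plan is to read ``initially complemented'' in its standard sense --- a lattice $L$ with least element $0$ in which every principal ideal $[0,a]$ is a complemented lattice, i.e. for all $b\le a$ there is $c\le a$ with $b\wedge c=0$ and $b\vee c=a$ --- and then to obtain distributivity (in fact Booleanness) by reducing to one of the results of the form ``uniquely complemented plus property $X$ implies Boolean'' recalled in Section~2, namely: a uniquely complemented \emph{relatively} complemented lattice is Boolean (see \cite{Ber}). It then suffices to upgrade ``sectionally complemented'' to ``relatively complemented'' under the hypothesis that complements are unique. Throughout, $L$ also has a $1$, each $x$ has a unique complement $x'$, and by Proposition~7.5 we may not assume modularity --- which is exactly what makes the argument nontrivial, since in a modular lattice sectional and relative complementation coincide.

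First I would fix an interval $[a,b]$ with $a\le x\le b$ and manufacture a candidate relative complement of $x$ in it. As $[0,b]$ is complemented, choose $c\le b$ with $x\wedge c=0$ and $x\vee c=b$, and set $d:=a\vee c$. Then $d\in[a,b]$, the join condition is automatic --- $x\vee d=(x\vee c)\vee a=b$ because $a\le x$ --- and $a\le x\wedge d$ trivially, so the whole statement reduces to the single inequality $x\wedge(a\vee c)\le a$. Since $a\le x$, that inequality is exactly the conclusion $x\wedge(a\vee c)=a\vee(x\wedge c)=a$ of the modular law, which we are not allowed to invoke. Writing $u:=x\wedge(a\vee c)$, the easy facts are $a\le u\le x$, $u\wedge c\le x\wedge c=0$, $u\vee c=a\vee c$ (because $a\le u\le a\vee c$), and likewise $a\wedge c\le x\wedge c=0$. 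Hence $a$ and $u$ are both relative complements of $c$ in $[0,a\vee c]$ with $a\le u$, so if $u\neq a$ they form a pentagon $\{0,a,u,c,a\vee c\}$ inside the interval $[0,a\vee c]$.

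The main obstacle --- and the reason this is not a corollary of Theorem~7.4 --- is that this pentagon is only a \emph{local} failure: its top $a\vee c$ need not be the global~$1$, so $c$ acquires two distinct complements only inside a proper interval, which does not by itself contradict unique complementation of $L$. The decisive step is to transport the bad configuration up to the top, producing a genuine element of $L$ with two complements in $L$; and this is delicate precisely because, $L$ being nondistributive, the naive moves (joining $a$, $u$, $c$ with $(a\vee c)'$, or with $c'$, to push the top to $1$) need not preserve the equalities $\wedge c=0$. I would attempt the transport through a careful choice of the section-complement $c$ together with an auxiliary element, using uniqueness of complements in $L$ to \emph{identify} the relevant relative complement --- the analogue of Huntington's own device recalled in Section~2, where $x\wedge y=0\Rightarrow y\le x'$ is obtained by identifying the complement of $x$ in $[0,x\vee y]$ with $x'\wedge(x\vee y)$. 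Should that prove intractable, the fallback is to drop relative complementation and instead verify one of Sali\u{\i}'s criteria: Theorem~7.11 (each nonzero element lies outside some prime ideal) or Theorem~7.12 (each nonzero element is carried to $1$ by a homomorphism $L\to\{0,1\}$), building the required prime ideals or homomorphisms from the sectional complements. Whichever route succeeds, the upshot is $u=a$ (equivalently, the Huntington condition holds), so $L$ is relatively complemented and the result of Section~2 makes $L$ distributive --- indeed Boolean. I expect the transport/identification step to carry essentially all the difficulty, and it is precisely there that unique complementation, and nothing weaker, must be used.
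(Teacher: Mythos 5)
Your reduction target is fine (Bergman's result, recalled in Section~2, that a uniquely complemented relatively complemented lattice is Boolean, or equivalently the Huntington condition $x\wedge y=0\Rightarrow y\le x'$), and your pentagon analysis correctly isolates what has to be shown: that $u:=x\wedge(a\vee c)$ equals $a$, i.e.\ that the local double complementation of $c$ in $[0,a\vee c]$ is impossible. But at exactly that point the proposal stops being a proof. You acknowledge that the pentagon lives inside a proper interval and therefore does not contradict unique complementation of $L$, and the ``decisive step'' of transporting it to the top is left as an intention (``I would attempt the transport through a careful choice of the section-complement $c$ together with an auxiliary element''), with no construction, no identity used, and no place where sectional complementation of $L$ (as opposed to mere boundedness) actually enters. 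The fallback via Theorems~7.11/7.12 is in the same state: producing, for each nonzero $x$, a prime ideal missing $x$ or a $\{0,1\}$-homomorphism sending $x$ to $1$ is essentially equivalent to the distributivity one is trying to prove, and you do not indicate how sectional complements would yield such ideals or homomorphisms. So the core of the theorem --- the only nontrivial part --- is missing, not merely compressed.

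For comparison: the paper itself gives no argument (Theorem~7.14 is quoted from Sali\u{\i}'s monograph, whose Definition~7.3 of ``initially complemented'' the paper even leaves truncated; your reading, every interval $[0,a]$ complemented, is the intended one). The standard route in Sali\u{\i} is to use sectional complements to establish that the unique complementation is antitone (equivalently, the Huntington condition), and then invoke precisely the quasi-identity criterion the paper records as Theorem~7.16; your sketch aims at the same intermediate statement but does not prove it. To repair the proposal you would need to exhibit the actual computation: e.g.\ given $x\wedge y=0$, take a sectional complement of a suitably chosen element inside $[0,x\vee x']$ or $[0,x\vee y]$ and use uniqueness of the global complement $x'$ to force $y\le x'$; until such a derivation is written out, the claim ``the upshot is $u=a$'' is an assertion of the theorem, not a proof of it.
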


\begin{thm}
A uniquely complemented lattice is distributive if and only if:
\begin{equation}
(x \vee ((x \vee y) \wedge x')) \wedge y < 0,
\end{equation}
for any x and any $y >0$.
\end{thm}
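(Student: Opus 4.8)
(We read the displayed condition as $(x\vee((x\vee y)\wedge x'))\wedge y>0$, i.e. as asserting that this meet is nonzero whenever $y>0$.) \textbf{($\Rightarrow$).} If $L$ is uniquely complemented and distributive (so, by Theorem 4.2, a Boolean lattice), then, using distributivity together with $x\wedge x'=0$ and $x\vee x'=1$, we get $(x\vee y)\wedge x'=(x\wedge x')\vee(y\wedge x')=y\wedge x'$, and therefore
\[
x\vee((x\vee y)\wedge x')=x\vee(y\wedge x')=(x\vee y)\wedge(x\vee x')=x\vee y ;
\]
the left-hand side of the condition is thus $(x\vee y)\wedge y=y$, which is $>0$ whenever $y>0$. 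This half is a routine computation; the content of the theorem is the converse.

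\textbf{($\Leftarrow$).} Now $L$ is uniquely complemented and satisfies the condition, and we must prove it distributive; the plan is to use the condition to force one of the sufficient conditions for distributivity of a uniquely complemented lattice already recorded above. \emph{Route 1 (sectional complements).} For $b\le a$, the natural candidate for a relative complement of $b$ in the interval $[0,a]$ is $w:=a\wedge b'$; one automatically has $b\wedge w\le b\wedge b'=0$ and $b\vee w\le b\vee a=a$, so the whole issue is the reverse inequality $b\vee(a\wedge b')\ge a$. The point is that $b\vee(a\wedge b')$ is precisely the expression occurring in the theorem, evaluated at $x=b$, $y=a$ (since $b\le a$ gives $b\vee a=a$). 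If the hypothesis can be leveraged to yield $b\vee(a\wedge b')=a$ for every $b\le a$, then each interval $[0,a]$ is complemented, i.e. $L$ is initially complemented in the sense of Definition 7.3, and Theorem 7.13 concludes.

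\emph{Route 2 (order-reversing complementation).} Alternatively, one tries to show the condition forces the map $x\mapsto x'$ to be order-reversing -- equivalently, that the De Morgan laws $(x\vee y)'=x'\wedge y'$ hold. An order-reversing involutive complementation makes $(L,\wedge,\vee,')$ at once uniquely complemented and orthocomplemented, hence distributive by the fact recorded in Example 5 (this also follows from Huntington's condition $x\wedge y=0\Rightarrow y\le x'$ of Section 2, or from Sali\u{\i}'s remark that unique complements plus the De Morgan laws already force distributivity). In either route a contrapositive formulation is handy: if $L$ is uniquely complemented but not distributive, then by Theorem 4.6 it contains a sublattice isomorphic to $M_{3}$ or to $N_{5}$, and one would aim to produce, from such a sublattice together with the ($L$-)complement of a suitably chosen element of it, a pair $(x,y)$ with $y>0$ and $(x\vee((x\vee y)\wedge x'))\wedge y=0$.

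The hard step is the passage from a hypothesis about certain \emph{meets being nonzero} to a statement about \emph{joins being equal} (or about $'$ being antitone). Whenever $(x\vee y)\wedge x'$ happens to be an honest relative complement of $x$ in $[0,x\vee y]$, the hypothesis holds automatically, so its entire content concerns the configurations with a \emph{defect} $x\vee((x\vee y)\wedge x')<x\vee y$; ruling out such defects is exactly where the uniqueness of complements must be used essentially. I would therefore expect the argument to assume a defect and extract from it either a sublattice forbidden by Theorem 4.6 (in the presence of the hypothesis) or an element of $L$ having no complement at all, contradicting that $L$ is complemented.
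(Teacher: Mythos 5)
Your reading of the displayed condition as ``$>0$'' rather than ``$<0$'' is the right call (in a lattice with bottom $0$ nothing can be strictly below $0$, so the statement as printed is a typo), and your forward direction is a correct routine Boolean computation. Note, however, that the paper itself offers no proof to compare against: this is one of the theorems quoted verbatim from Sali\u{\i}'s monograph in Section 7.2, stated without argument, so the only question is whether your attempt stands on its own.

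It does not, because the converse --- which you yourself identify as ``the content of the theorem'' --- is never actually proved. Route 1 stalls exactly where you say it does: taking $x=b\le a=y$, the hypothesis only gives $\bigl(b\vee(a\wedge b')\bigr)\wedge a>0$, i.e.\ $b\vee(a\wedge b')>0$, which is far weaker than the equality $b\vee(a\wedge b')=a$ needed to conclude that $[0,a]$ is complemented and invoke Theorem 7.13; no mechanism is offered for upgrading ``nonzero'' to ``equal to $a$'', and that upgrade is the whole theorem. Route 2 (forcing $x\mapsto x'$ to be antitone and then citing Theorem 7.8 or 7.9) is likewise only named, not executed, and the contrapositive plan via $M_3$/$N_5$ sublattices (Theorem 4.6) faces a concrete obstacle you do not address: the complement operation $'$ is taken in $L$, not in the sublattice, so a forbidden sublattice gives no control over the elements $x'$ appearing in the condition, and it is not explained how to manufacture a pair $(x,y)$ with $y>0$ and $\bigl(x\vee((x\vee y)\wedge x')\bigr)\wedge y=0$ from it. In short, what you have is a correct easy half plus a candid research plan for the hard half; the essential step --- using uniqueness of complements to rule out the ``defect'' $x\vee((x\vee y)\wedge x')<x\vee y$, or otherwise deriving one of the known sufficient conditions (initial complementation, antitone complementation, or the regular-element criterion of Theorem 7.17) from the stated inequality --- is missing.
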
 

\begin{thm}
In a uniquely complemented lattice, the quasi-identity $x \leq y \Rightarrow x' \ge y'$  implies distributivity.
\end{thm}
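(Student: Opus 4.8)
The plan is to observe that the quasi-identity $x \leq y \Rightarrow x' \geq y'$ is precisely the order-reversing clause that upgrades the (already unique) complementation of $L$ to an \emph{orthocomplementation}, so that $L$ becomes a uniquely complemented \emph{orthocomplemented} lattice; one then concludes by the fact recalled in Subsection 5.5 (Example 5) that such a lattice is distributive, indeed Boolean. Philosophically this is exactly the extra hypothesis (Huntington's ``postulate 19'') under which Huntington's conjecture does go through.

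Concretely, I would first recall that, $L$ being uniquely complemented, each $a \in L$ has a single complement $a'$ (so $a \wedge a' = 0$ and $a \vee a' = 1$), and that uniqueness forces $(a')' = a$; in particular the set $M_a$ of complements of $a$ is the singleton $\{a'\}$. By hypothesis the assignment $a \mapsto a'$ is order-reversing. These three facts --- $a' \in M_a$, the assignment is involutive, the assignment is order-reversing --- are exactly the axioms listed after Definition 3.11, so, taking the orthocomplement of $a$ to be $a'$, the lattice $L$ is orthocomplemented. As a free by-product one obtains the De Morgan laws: from $x \leq x \vee y$ and $y \leq x \vee y$ one gets $(x \vee y)' \leq x' \wedge y'$, while from $x' \wedge y' \leq x'$, $x' \wedge y' \leq y'$ and involutivity one gets $x \vee y \leq (x' \wedge y')'$, hence $x' \wedge y' \leq (x \vee y)'$; so $(x \vee y)' = x' \wedge y'$, and dually $(x \wedge y)' = x' \vee y'$. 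Now Subsection 5.5 (Example 5) gives that an orthocomplemented uniquely complemented lattice is distributive; alternatively, one finishes via Sali\u{\i}'s observation, recorded in Subsection 7.1 just after Proposition 7.8, that a uniquely complemented lattice obeying the De Morgan laws is distributive. Finally, a complemented distributive lattice is Boolean (Theorem 4.2), so $L$ is a Boolean algebra.

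The single genuinely non-trivial ingredient is the implication ``orthocomplemented $+$ uniquely complemented $\Rightarrow$ distributive'', which is stated in Subsection 5.5 without proof and is the hard classical core of this whole circle of results. If a self-contained argument is desired, I would proceed in two stages. First, $L$ is \emph{orthomodular}: otherwise, by the standard forbidden-configuration description of orthomodularity, $L$ contains a sub-ortholattice isomorphic to the hexagon $O_{6} = \{0, a, b, b', a', 1\}$ with $0 < a < b < 1$ and $0 < b' < a' < 1$ (whose extrema coincide with $0$ and $1$ of $L$, since $a \vee a' = 1$); but there $a \wedge b' = 0$ and $a \vee b' = 1$, so $b'$ is a complement of $a$ distinct from $a'$, contradicting unique complementation. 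Second, $L$ is \emph{Boolean}: a non-Boolean orthomodular lattice contains a sub-ortholattice isomorphic to $MO_{2} = \{0, a, a', b, b', 1\}$, in which $a \wedge b = 0$ and $a \vee b = 1$, again endowing $a$ with a second complement --- impossible. Hence $L$ is Boolean, a fortiori distributive. The real work lives in these two forbidden-configuration facts; everything else is bookkeeping.
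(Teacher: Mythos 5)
First, a point of comparison: the paper itself offers no proof of this statement --- it is one of the theorems quoted from Sali\u{\i}'s book without demonstration --- so your attempt has to be judged on its own merits. Your opening reduction is correct and is surely the intended reading: in a uniquely complemented lattice $(a')'=a$ holds automatically (since $a$ is a complement of $a'$ and complements are unique), so the quasi-identity $x\leq y \Rightarrow x'\geq y'$ makes $a\mapsto a'$ an orthocomplementation, and the De Morgan laws follow exactly as you derive them. But at that point the statement has merely been rephrased as ``a uniquely complemented ortholattice is distributive'', which is another of Sali\u{\i}'s theorems quoted (also without proof) later in the same section; citing it is no more than the paper itself does, and you rightly acknowledge that this implication is where all the content lives.

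Your self-contained argument for that core is half right. Stage 1 is fine: the characterization of orthomodularity by the absence of a sub-ortholattice isomorphic to the benzene ring $O_{6}$ is a genuine classical fact (Kalmbach), any sub-ortholattice automatically contains the $0$ and $1$ of $L$, and inside $O_{6}$ the element $b'$ is then a second complement of $a$ in $L$, contradicting uniqueness. Stage 2, however, rests on a false lemma: it is \emph{not} true that every non-Boolean orthomodular lattice contains a sub-ortholattice isomorphic to $MO_{2}$. Take $L=2\times MO_{2}$: it is orthomodular and non-distributive, but a sub-ortholattice must contain $0_L$ and $1_L$, and an $MO_{2}$ copy would require four pairwise-complementary elements; their first coordinates would then have to be pairwise complementary in the two-element chain, which is impossible by pigeonhole (two of the four share a first coordinate, and their meet and join cannot then be $(0,0)$ and $(1,1)$). (Of course $2\times MO_{2}$ is not uniquely complemented --- $(1,a)$ has three complements --- but your argument invokes the $MO_{2}$ lemma for arbitrary non-Boolean orthomodular lattices, and there it fails; restricting it to uniquely complemented ones would be circular.) What is needed at this point is a genuine argument, e.g.\ the classical fact that in an orthomodular lattice an element with a unique complement is central, or --- closer to the spirit of the paper --- a derivation of Huntington's condition $x\wedge y=0 \Rightarrow y\leq x'$ from unique complementation plus De Morgan, after which Huntington's own theorem recalled in Section 2 finishes the proof. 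As it stands, the final step of your proof does not go through.
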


From the above theorems, we can infer:

1. If a lattice is non distributive, which is the case of the uniquely complemented nondistributive lattice UCN), then it does not satisfies the descending chain condition or the ascending chain condition.

2. In a UCN the largest element is not the least upper bound of its sets of atoms.

3. In a UCN, for each nonzero element, all prime ideals contain it.

4. In a UCN, for each nonzero element $x$, there does not exist a homomorphism $\phi: L \to \{0,1\}$ onto the two-element lattice such that $\phi(x) = 1$.

5. A UCN is not an initially complemented UC lattice.

6. In a UCN, the inequation (3) does not hold for any $x$ and any $y >0$.

7. In a UCN, the quasi-identity $x \leq y \Rightarrow x' \ge y'$ does not imply distributivity.

Sali\u{\i} proves also:

\begin{thm}
A uniquely complemented ortholattice is distributive.
\end{thm}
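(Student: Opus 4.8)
The plan is to recognize that this theorem is essentially a corollary of the criterion already recorded above (Theorem~7.15): a uniquely complemented lattice in which the quasi-identity $x \leq y \Rightarrow x' \geq y'$ holds is distributive. The only thing that really has to be checked is that a uniquely complemented ortholattice does satisfy that quasi-identity, and this will follow from the fact that in such a lattice the distinguished orthocomplementation must coincide with the (necessarily unique) complementation.

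First I would fix notation. Let $L$ be a uniquely complemented ortholattice; write $'$ for the operation assigning to each element its unique complement, and write $^{\perp}$ for the orthocomplementation furnished by the ortholattice structure (Definition~3.11). For $x \in L$, let $M_x$ denote the set of all complements of $x$. Since $L$ is uniquely complemented, $M_x = \{x'\}$. On the other hand, by the very definition of an ortholattice, $x^{\perp}$ is a complement of $x$, i.e. $x^{\perp} \in M_x$. Hence $x^{\perp} = x'$ for every $x \in L$: the distinguished orthocomplement and the unique complement are one and the same unary operation.

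Next I would transport the axioms of $^{\perp}$ to $'$. One of the defining properties of an ortholattice is that $^{\perp}$ is order-reversing: $a \leq b$ implies $b^{\perp} \leq a^{\perp}$. Rewriting this with $^{\perp} = {}'$ gives exactly $a \leq b \Rightarrow b' \leq a'$, so the quasi-identity $x \leq y \Rightarrow x' \geq y'$ holds throughout $L$. Applying Theorem~7.15 then yields that $L$ is distributive, which is the claim. (Equivalently, since $'$ is an order-reversing involution of the bounded lattice $L$, it is a dual automorphism, so the De Morgan laws hold, and one could instead invoke the remark attributed to Sali\u{\i} above that unique complements together with the De Morgan laws already force distributivity.)

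I do not expect a genuine obstacle here: the only point needing attention is the identification $x^{\perp} = x'$, and once that is in hand the statement is a one-line consequence of Theorem~7.15. The substantive work lies upstream, in the proof of that criterion — and, behind it, in the Dilworth-style structure theory of uniquely complemented lattices. Were one to insist on a proof independent of Theorem~7.15, the burden would shift to re-deriving distributivity from unique complementation plus the De Morgan laws, and that is the genuinely technical part.
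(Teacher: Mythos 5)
Your argument is correct, and it is worth noting that the paper itself offers no proof of this statement: it is quoted from Sali\u{\i}'s monograph in a list of results stated without demonstration, so there is no internal proof to compare against. Your reduction is exactly the standard one and fits the paper's own framework: since $x^{\perp}$ is by definition a complement of $x$ and complements are unique, $x^{\perp}=x'$, so the antitonicity clause in the definition of an ortholattice (Definition 3.11, condition 3) gives the quasi-identity $x\leq y \Rightarrow x'\geq y'$, and the theorem stated just above (that in a uniquely complemented lattice this quasi-identity implies distributivity) finishes the job. The only caveat, which you already acknowledge, is that the quasi-identity theorem you invoke is itself stated in the paper without proof, so your argument inherits that dependence; as a derivation of the ortholattice statement from the results the paper records, it is complete and sound, and your parenthetical De Morgan route is an equally legitimate variant resting on the same upstream material.
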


\begin{thm}
A uniquely complemented orthomodular lattice is distributive.
\end{thm}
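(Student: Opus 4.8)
The plan is to reduce this statement to material already in hand, above all the preceding theorem that a uniquely complemented ortholattice is distributive, together with the theorem that in a uniquely complemented lattice the quasi-identity $x \leq y \Rightarrow x' \geq y'$ implies distributivity.

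The first and decisive step is to observe that uniqueness of complements pins down the orthocomplementation. In a uniquely complemented lattice the assignment $x \mapsto x'$ is a well-defined unary operation, and by the very definition of a complement, for every $x$ the orthocomplement $x^{\perp}$ provided by the orthomodular structure satisfies $x \wedge x^{\perp} = 0$ and $x \vee x^{\perp} = 1$, so $x^{\perp}$ is a complement of $x$; by uniqueness we must have $x^{\perp} = x'$. Consequently the orthocomplementation axiom $x \leq y \Rightarrow x^{\perp} \geq y^{\perp}$ translates into the assertion that the unique complementation is order-reversing, i.e. $x \leq y \Rightarrow x' \geq y'$.

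From there I would conclude in whichever of two equivalent ways is more convenient. On the one hand, a uniquely complemented orthomodular lattice is a fortiori a uniquely complemented ortholattice (an orthomodular lattice being nothing but an ortholattice subject to one additional identity), so distributivity follows at once from the preceding theorem. On the other hand, and more self-containedly, once we know $x \leq y \Rightarrow x' \geq y'$ holds, the theorem stating that this quasi-identity forces distributivity in a uniquely complemented lattice applies verbatim. Either route gives that $L$ is distributive, hence — being complemented — Boolean.

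The only point that genuinely needs care — the ``obstacle'', such as it is — is the elementary identification $x^{\perp} = x'$, and what deserves emphasis is that it rests solely on uniqueness of complements and uses none of the ortho-axioms; the orthomodular law itself contributes nothing beyond what ``ortholattice'' already contributes, and everything after the identification is a citation of earlier theorems. As a sanity check one may note the moral content of the result: imposing unique complementation on an orthomodular lattice (a quantum logic) collapses it to a Boolean algebra, in line with the discussion of Example 5.
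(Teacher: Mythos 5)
Your argument is correct, but note that the paper itself offers no proof of this statement: it is quoted, without proof, from Sali\u{\i}'s monograph, immediately after the companion theorem that a uniquely complemented ortholattice is distributive. What you have actually shown is that, given the surrounding material, the orthomodular case needs no separate argument, and both of your routes are valid. Your first route is the sharper observation: an orthomodular lattice is by definition an orthocomplemented lattice (the paper's Definition 3.6 already builds in the involution and order-reversal axioms, with the orthomodular law as an extra condition), so the preceding ortholattice theorem applies verbatim and the orthomodular law contributes nothing --- for this route your ``decisive'' identification $x^{\perp}=x'$ is not even needed. The identification itself is sound (the orthocomplement is a complement of $x$, hence by unique complementation it is the complement), and it is exactly what your second route needs in order to invoke the theorem that the quasi-identity $x \leq y \Rightarrow x' \geq y'$ forces distributivity in a uniquely complemented lattice. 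The only caveat is that both reductions lean on theorems the paper also states without proof (they are Sali\u{\i}'s), so what you have is a clean reduction to those cited results rather than an independent proof.
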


\begin{thm}
In a uniquely complemented lattice either of the following conditions implies distributivity:
\begin{equation}
(x \wedge y)' = x' \vee y'\ \textnormal{for any comparable element}\ x\ \textnormal{and}\ y,
\end{equation}
\begin{equation}
(x \wedge y)' = x' \vee y'\ \textnormal{\ for any noncomparable element}\ x\ \textnormal{and}\ y.
\end{equation}
\end{thm}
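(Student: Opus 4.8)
The plan is to show that each of the two conditions forces the complementation $x\mapsto x'$ to be order-reversing, after which Theorem 7.15 --- which states that in a uniquely complemented lattice the quasi-identity $x\le y\Rightarrow x'\ge y'$ already implies distributivity --- finishes the job.

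For the condition on comparable pairs this is immediate. If $x\le y$, then $x\wedge y=x$, so the identity $(x\wedge y)'=x'\vee y'$ reads $x'=x'\vee y'$, that is, $y'\le x'$; by the symmetric argument the same conclusion holds whenever $y\le x$. Hence $'$ is order-reversing and Theorem 7.15 applies.

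For the condition on noncomparable pairs I would first isolate an observation that uses no hypothesis at all: in any uniquely complemented lattice, $a\le b$ together with $a'\le b'$ forces $a=b$. Indeed, $a\le b$ gives $a\wedge b'\le b\wedge b'=0$, while $a'\le b'$ gives $a\vee b'\ge a\vee a'=1$; thus $b'$ is a complement of $a$, so $b'=a'$ by uniqueness of complements, whence $a=a''=b''=b$. In particular, whenever $a<b$ we have $a'\not\le b'$, and also $a'\ne b'$ (since $a'=b'$ would give $a=b$). Now fix an arbitrary pair $a\le b$; we must show $b'\le a'$, and may assume $a<b$. If $a'$ and $b'$ were incomparable, then the De Morgan identity applied to this noncomparable pair would give $(a'\wedge b')'=a''\vee b''=a\vee b=b$, hence, $'$ being an involution, $a'\wedge b'=b'$, i.e. $b'\le a'$ --- contradicting incomparability. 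So $a'$ and $b'$ are comparable, and combined with $a'\not\le b'$ this forces $b'<a'$. Therefore $'$ is order-reversing, and Theorem 7.15 yields distributivity.

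The computations are all elementary; the only point needing care is the case distinction in the noncomparable part. The ``wrong-way'' comparability $a'\le b'$ is excluded by the \emph{hypothesis-free} observation above, and the genuinely new input --- the De Morgan law restricted to noncomparable pairs --- is used precisely to eliminate the possibility that $a'$ and $b'$ are incomparable. Conceptually, the main obstacle is recognizing that in a uniquely complemented lattice the complement map can never be strictly order-\emph{preserving} on a comparable pair, so that verifying order-reversal reduces to ruling out the incomparable configuration for the pair $(a',b')$ --- which is exactly what the hypothesis provides.
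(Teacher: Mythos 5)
Your proof is correct, but note that the paper itself offers no proof of this statement: it is one of the results quoted from Sali\u{\i}'s monograph (\cite{Sal2}) without argument, so there is no in-paper proof to compare against and your derivation has to stand on its own relative to the other results the paper records. On that basis it holds up. The comparable case is the immediate computation $x\le y\Rightarrow x\wedge y=x\Rightarrow x'=x'\vee y'\Rightarrow y'\le x'$. In the noncomparable case, your hypothesis-free lemma is sound: if $a\le b$ and $a'\le b'$, then $a\wedge b'\le b\wedge b'=0$ and $a\vee b'\ge a\vee a'=1$, so $b'$ is a complement of $a$, whence $b'=a'$ and, since complementation in a uniquely complemented lattice is automatically an involution, $a=b$; this correctly excludes $a'\le b'$ when $a<b$. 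Applying the restricted De Morgan law to the pair $(a',b')$, assumed incomparable, gives $(a'\wedge b')'=a\vee b=b$ and hence $a'\wedge b'=b'$, i.e.\ $b'\le a'$, which both contradicts incomparability and is the desired conclusion; the only remaining possibility is $b'<a'$. So under either hypothesis the complementation is antitone, and the appeal to Theorem 7.15 (unique complementation plus the quasi-identity $x\le y\Rightarrow x'\ge y'$ implies distributivity) is legitimate, since that theorem is stated independently in the paper (also from Sali\u{\i}) and its standard proof does not rest on the present statement, so no circularity arises. This reduction-to-antitonicity route is the natural one and is presumably close to the original argument, but the paper gives no basis for a closer comparison.
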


Hence we can infer:

8. A UCN can neither be a complemented ortholattice, nor a complemented orthomodular lattice.

9. In a UCN, the conditions (4) and (5) do not hold.

Moreover, the Birkhoff-von Neumann theorem asserts that the uniquely complemented modular lattices are distributive. But it turns out that they are distributive in a much wider class.
According to Sali\u{\i}:

\begin{thm}
A uniquely complemented 0-semimodular lattice is distributive.
\end{thm}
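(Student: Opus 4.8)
The plan is to reduce the statement to Theorem 7.13 (an initially complemented uniquely complemented lattice is distributive) by showing that a uniquely complemented $0$-semimodular lattice is initially (i.e.\ sectionally) complemented. The first move is to fix what I use from ``$0$-semimodular'': it is the localization at the bottom element of the upper covering condition, and the feature I want to extract from it, for a lattice with $0$, is the $0$-modular identity
\[
a\wedge b=0\ \text{and}\ a\le c\ \Longrightarrow\ a\vee(b\wedge c)=(a\vee b)\wedge c ,
\]
which is exactly what lets complements relativize to sections; if the working definition supplies only the bare covering form, this identity has to be recovered by the chain argument described below.

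Granting the $0$-modular identity, let $L$ be uniquely complemented and $0$-semimodular and fix $0\le a\le b$ in $L$; I want a complement of $a$ inside the section $[0,b]$. Let $a'$ be the unique complement of $a$ in $L$ and put $c:=a'\wedge b$. Absorption gives $a\wedge c=a\wedge a'\wedge b=0$, and the $0$-modular instance applied to $a\wedge a'=0$ together with $a\le b$ gives $a\vee c=a\vee(a'\wedge b)=(a\vee a')\wedge b=1\wedge b=b$. Hence $c$ is a complement of $a$ in $[0,b]$, so every section of $L$ is complemented; that is, $L$ is initially complemented, and Theorem 7.13 finishes the proof.

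The genuinely delicate step is thus the passage from the ``$0$-local'' hypothesis to that identity (equivalently, to some global structural statement). With only the covering form of $0$-semimodularity available, the computation above yields merely that $b$ covers $a\vee c$, and one must climb a maximal chain from $a\vee c$ up to $b$, using at each cover that a uniquely complemented lattice cannot tolerate a proper cover inside a section once complements are transported back to $L$ via $x\mapsto x'$. Should that route stall, the natural alternatives are: show $L$ is atomic — the covering property forces atoms below any nonzero element whose section is already known to be complemented — and invoke Theorem 4.5; or show that $1$ is the least upper bound of the atoms of $L$ and invoke Theorem 7.10. I expect essentially all of the real content of the proof to sit in this one passage, everything after it being a citation.
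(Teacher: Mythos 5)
The paper states this result without proof (it is quoted from Sali\u{\i}'s monograph), so the only question is whether your argument stands on its own, and it does not: the load-bearing step is the passage from $0$-semimodularity to the ``$0$-modular identity'' $a\wedge b=0,\ a\le c \Rightarrow a\vee(b\wedge c)=(a\vee b)\wedge c$, and that implication is false as a lattice-theoretic fact. $0$-semimodularity is a covering condition about atoms (if $p$ is an atom and $p\wedge b=0$ then $p\vee b$ covers $b$); it neither implies nor is implied by $0$-modularity. Any nonmodular geometric lattice witnesses this: the partition lattice $\Pi_{4}$ is semimodular, hence $0$-semimodular, yet the partitions $x=13|2|4$, $z=13|24$, $y=12|34$ form a pentagon $N_{5}$ whose least element is the bottom of the lattice, so the identity you want fails there. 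Of course $\Pi_{4}$ is not uniquely complemented, but you cannot appeal to unique complementation to rescue the implication without circularity: in a uniquely complemented lattice, ``$0$-modular'' already forces distributivity (Theorem 7.20), so what your main line actually proves (correctly, via sectional complementation and Theorem 7.13) is the Grillet--Varlet theorem, a result with an incomparable hypothesis, not the stated one.

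The fallback routes you sketch do not close this gap. The claim that the covering form of $0$-semimodularity yields ``$b$ covers $a\vee c$'' is unsupported --- the hypothesis only transfers covers of the form $0\prec p$ upward, and says nothing here unless suitable atoms exist; and the chain-climbing argument presupposes maximal chains of covers between $a\vee c$ and $b$, i.e.\ a chain condition that a putative uniquely complemented nondistributive lattice cannot have (Theorem 7.9). Likewise, you cannot ``show $L$ is atomic'' or that $1$ is the join of the atoms from $0$-semimodularity: the condition is vacuous in an atomless lattice, and by Proposition 7.1 a uniquely complemented nondistributive lattice is necessarily non-atomic, so this is exactly the regime the theorem must handle. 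In short, the step you yourself identify as carrying ``essentially all of the real content'' is missing, and without it the reductions to Theorems 4.5, 7.10 or 7.13 do not go through.
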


\begin{thm}[\cite{Gri}]
A uniquely complemented 0-modular lattice is distributive.
\end{thm}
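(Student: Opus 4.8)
The plan is to obtain the statement from the earlier result that an initially (equivalently, sectionally) complemented uniquely complemented lattice is distributive (Theorem 7.13), by first proving the intermediate fact that \emph{every uniquely complemented $0$-modular lattice is initially complemented} in the sense of Definition 7.3, i.e. each interval $[0,a]$ is itself a complemented lattice. Since the excerpt never spells out $0$-modularity, I take it in the usual sense: the modular identity of Definition 3.5 is imposed only when the two ``separated'' elements are disjoint, that is,
\[
\bigl(x \leq z \ \text{and}\ x \wedge y = 0\bigr)\ \Longrightarrow\ x \vee (y \wedge z) = (x \vee y) \wedge z .
\]
The whole point of this particular relativization is that an element and its complement are automatically disjoint, so every complement pair falls within its scope.

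First I would fix $a \in L$ and an arbitrary $x \in [0,a]$, let $x'$ denote the unique complement of $x$ in $L$, and put $y := x' \wedge a$. Then $x \wedge y = (x \wedge x') \wedge a = 0$, so the only thing left to verify is $x \vee y = a$; here I would invoke $0$-modularity on the triple $(x, x', a)$, which is a legal instance precisely because $x \leq a$ and $x \wedge x' = 0$, obtaining
\[
x \vee (x' \wedge a) = (x \vee x') \wedge a = 1 \wedge a = a .
\]
Thus $x' \wedge a$ is a complement of $x$ inside $[0,a]$, and since $a$ was arbitrary, $L$ is initially complemented; Theorem 7.13 then delivers distributivity. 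Note that unique complementation is used only at this final step — the sectional complement $x' \wedge a$ is available in \emph{any} complemented $0$-modular lattice.

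A second route I would keep in reserve bypasses Theorem 7.13 by establishing directly that the complementation is order-reversing and then quoting Theorem 7.15. Given $x \leq y$, set $u := x' \wedge y$, so that $x \wedge u = 0$ and, by the computation above carried out inside $[0,y]$, $x \vee u = y$. I would then check that $y' \vee u$ is a complement of $x$ in $L$: on the one hand $x \vee (y' \vee u) = (x \vee u) \vee y' = y \vee y' = 1$; on the other hand a second application of $0$-modularity, now to $(u, y', y)$ — legitimate since $u \leq y$ and $u \wedge y' = (x' \wedge y) \wedge y' = 0$ — yields $(u \vee y') \wedge y = u$, whence, using $x \leq y$,
\[
x \wedge (y' \vee u) = x \wedge y \wedge (y' \vee u) = x \wedge u = 0 .
\]
Uniqueness of complements forces $y' \vee u = x'$, so $y' \leq x'$; as $x \leq y$ was arbitrary, the complementation is antitone and Theorem 7.15 finishes the argument.

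I do not expect a genuine obstacle here: the proof is short and essentially computational. The one point that needs care is bookkeeping about the exact shape of the $0$-modular law, since everything hinges on the hypothesis ``$x \wedge y = 0$'' being matched by the disjointness $x \wedge x' = 0$ of an element and its complement. Should one adopt a differently phrased relativization of modularity, the safe fallback is to route through Theorem 7.19 (uniquely complemented $0$-semimodular lattices are distributive) after checking that $0$-modularity implies $0$-semimodularity, which is just the ``localized at $0$'' analogue of the classical fact that modular lattices are semimodular.
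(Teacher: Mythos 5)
The paper itself gives no proof of this statement --- it is quoted from Grillet and Varlet --- so your argument has to be measured against their theorem, and there the difficulty is exactly the point you flagged as needing care: the shape of the $0$-modular law. The definition you adopt is not the usual one. In Grillet--Varlet's sense (also the one used in Sali\u{\i}'s book), a lattice with $0$ is $0$-modular when $b \leq c$ and $a \wedge c = 0$ imply $(a \vee b) \wedge c = b$; equivalently, no sublattice isomorphic to $N_{5}$ contains $0$. The disjointness is required between the outer element and the \emph{larger} member of the comparable pair, not the smaller one as in your formulation. Your law is strictly stronger: since $x \leq z$ and $y \wedge z = 0$ force $x \wedge y = 0$, every instance of the standard law is an instance of yours, but not conversely. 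Concretely, adjoin to the pentagon $\{p,a,b,c,t\}$ (with $p < a < b < t$, $p < c < t$) a new least element $0$ and a new atom $x < a$ with $x \wedge p = x \wedge c = 0$, $x \vee p = a$, $x \vee c = t$. No pentagon contains $0$ (all nonzero elements other than $x$ meet in $p$ or above, and $x \vee p = a \neq t = x \vee c$), so this lattice is $0$-modular in the standard sense; yet $x \leq b$, $x \wedge c = 0$, and $x \vee (c \wedge b) = a < b = (x \vee c) \wedge b$, violating your version.

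Consequently the first step of both of your routes --- applying the law to the triple $(x, x', z)$ with $x \leq z$ on the strength of $x \wedge x' = 0$, to conclude $x \vee (x' \wedge z) = z$ --- is not licensed by standard $0$-modularity (it would require $x' \wedge z = 0$, which is false in general), so what you prove is a weaker theorem than the one attributed to Grillet and Varlet. It is telling that this step uses no unique complementation at all: deriving $x \vee (x' \wedge y) = y$, or the antitonicity of the complementation, from standard $0$-modularity \emph{together with} unique complements is precisely the real content of the cited result, not a routine transcription of ``complemented modular implies relatively complemented''. (Your second application in the antitone route, $(u \vee y') \wedge y = u$ from $u \leq y$ and $y' \wedge y = 0$, is a legitimate instance even in the standard sense; only the first step fails.) The fallback through Theorem 7.19 does not repair this: whether $0$-modularity implies $0$-semimodularity depends on the precise localized definitions and is not the automatic analogue of ``modular implies semimodular''; indeed the paper lists the $0$-modular theorem as a further result beyond the $0$-semimodular one rather than as a corollary of it. So the overall strategy (reduce to sectional complementation or to antitone complementation, then quote Theorem 7.13 or 7.15) is reasonable, but the bridge from the actual $0$-modular hypothesis to either of those reductions is missing.
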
 

Hence we can infer that a UCN is neither 0-semimodular nor 0-modular.

Among several other properties implied by a uniquely complemented lattice, we extract this one, reported by Gr\"{a}tzer in his review of Sali\u{\i}'s book (see \cite{Gra4}).

\begin{thm}
A uniquely complemented lattice is distributive iff each nonzero element contains a nonzero regular element\footnote{Recall that an element $a$ is "regular" iff
$a \wedge x = 0$ and $a \wedge y = 0$ imply that $a \wedge (x \vee y) = 0$.}. 
\end{thm}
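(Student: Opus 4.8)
The direct implication is the easy half. A uniquely complemented lattice is complemented, hence bounded, so it has a least element $0$; and if it is moreover distributive then the identity $a\wedge(x\vee y)=(a\wedge x)\vee(a\wedge y)$ shows at once that \emph{every} element is regular, since $a\wedge x=a\wedge y=0$ forces $a\wedge(x\vee y)=0$. Thus a nonzero $x$ already contains the nonzero regular element $x$ itself. (By Theorem~4.2 the lattice is then in fact Boolean, but this is not needed for this direction.)

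For the converse the plan is to reduce to the prime-ideal criterion of Theorem~7.11: if in a uniquely complemented lattice every nonzero element lies outside some prime ideal, the lattice is distributive. (Equivalently one could aim at Theorem~7.12 and produce, for each nonzero $x$, a homomorphism $\phi\colon L\to\{0,1\}$ with $\phi(x)=1$.) So fix $x\neq 0$ and, using the hypothesis, pick a nonzero regular $r\leq x$; it suffices to produce a prime ideal missing $r$, since any such ideal, being a down-set, also misses $x$. First I would check that the annihilator $r^{\perp}=\{y\in L:\ y\wedge r=0\}$ is an ideal: it is a down-set because $\wedge$ is monotone, and it is closed under finite joins \emph{exactly because $r$ is regular}. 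It is proper ($r\notin r^{\perp}$), it omits $x$ ($x\wedge r=r\neq0$), and it contains the complement $r'$, since $r'\wedge r=0$. By Zorn's lemma — the union of a chain of ideals avoiding $r$ is again an ideal avoiding $r$ — extend $r^{\perp}$ to an ideal $P$ maximal among ideals that contain $r^{\perp}$ and do not contain $r$; then $P$ is proper and $x\notin P$.

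The heart of the matter, and the step I expect to be the main obstacle, is to show that this $P$ is \emph{prime}; this is where unique complementation must be used in an essential way, just as in the preceding theorems of Sali\u{\i}. Suppose $a\wedge b\in P$ while $a\notin P$ and $b\notin P$. Maximality of $P$ gives $p_{1},p_{2}\in P$ with $r\leq p_{1}\vee a$ and $r\leq p_{2}\vee b$; putting $q=p_{1}\vee p_{2}\vee(a\wedge b)\in P$ and $s=q\vee r'\in P$, one gets $s\vee a=s\vee b=1$ while $a\wedge b\leq s$. In a distributive lattice one would conclude $1=(s\vee a)\wedge(s\vee b)=s\vee(a\wedge b)=s\in P$, contradicting properness; the entire difficulty is that this last inference is unavailable here. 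Closing it is what the proof must accomplish: one has to compute with the complements $s'$, $a'$, $b'$ and exploit their uniqueness, together with the relations $s\vee a=s\vee b=1$, $a\wedge b\leq s$ and $r'\leq s$, to exhibit an element of $P$ dominating $r$, which contradicts $r\notin P$. (An alternative line, perhaps the one actually used by Sali\u{\i}, would bypass the maximal ideal and instead show directly that density of the regular elements makes the join-preserving map $y\mapsto[y\wedge r\neq0]$ meet-preserving as well, i.e.\ a homomorphism onto $\{0,1\}$ separating $x$ from $0$, which is again Theorem~7.12.) Once primeness of $P$ is established, Theorem~7.11 yields distributivity, and then Theorem~4.2 shows that $L$ is Boolean.
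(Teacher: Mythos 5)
Your forward direction is fine, and the scaffolding of the converse is sensible: reducing to the prime-ideal criterion (Theorem 7.11), checking that the annihilator $r^{\perp}=\{y: y\wedge r=0\}$ of a nonzero regular $r\leq x$ is an ideal (this is indeed exactly what regularity buys), noting $r'\in r^{\perp}$, $x\notin r^{\perp}$, and extending by Zorn to an ideal $P$ maximal with respect to omitting $r$. But the proof stops precisely where the theorem lives: you never show that $P$ (or $r^{\perp}$ itself, in your alternative sketch) is prime, and you say so yourself. This is not a routine step that can be waved at. In a general lattice an ideal maximal with respect to missing an element need not be prime: in $M_{3}$ with atoms $a,b,c$, the ideal $\{0,a\}$ is maximal among ideals not containing $b$, yet $b\wedge c=0\in\{0,a\}$ while $b,c\notin\{0,a\}$. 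So the inference "maximal avoiding $r$ $\Rightarrow$ prime" is exactly as strong as the distributivity you are trying to prove, and the burden of the argument is to extract it from unique complementation plus regularity of $r$; your sketch with $s=q\vee r'$ makes this explicit and then leaves it open. The same gap reappears in the alternative route: meet-preservation of the map $y\mapsto[y\wedge r\neq 0]$ is precisely the assertion that $r^{\perp}$ is prime, which is again the unproved core. As it stands, the proposal is a correct reduction plus an honest statement of the missing lemma, not a proof.

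For what it is worth, the paper itself offers no proof to compare against: this theorem is quoted (Section 7) from Gr\"{a}tzer's review of Sali\u{\i}'s monograph, and the argument lives in Sali\u{\i}'s book, where the needed facts about regular elements in uniquely complemented lattices (in particular the behaviour of their annihilators and of the complement operation on them) are developed before this equivalence is derived. If you want to complete your line of attack, that is the material to import: establish, using unique complementation, that for a regular $r$ the ideal $r^{\perp}$ is prime (equivalently, that $y\wedge r\neq 0$ and $z\wedge r\neq 0$ force $y\wedge z\wedge r\neq 0$); with that lemma in hand, your appeal to Theorem 7.11 (or 7.12) finishes the converse exactly as you outlined.
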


So, here again, we can state the following proposition:

\begin{prop}
 In a uniquely nondistributive complemented lattice each element does not contain a nonzero regular element.
\end{prop}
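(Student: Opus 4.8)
The plan is to read this off the characterization stated just above as Theorem~7.21 (a uniquely complemented lattice $L$ is distributive if and only if every nonzero element of $L$ contains a nonzero regular element), by contraposition --- exactly the ``negate the hypothesis'' move already used to pass from Theorems 7.2, 7.4, 7.6 to Propositions 7.3, 7.5, 7.7 and to extract the numbered consequences 1--9 in this subsection. No fresh lattice-theoretic fact is required; once Theorem~7.21 is granted the argument is purely propositional.

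First I would fix terminology: for $a,b\in L$, ``$a$ contains $b$'' means $b\le a$, and (as in the footnote to Theorem~7.21) $b$ is \emph{regular} when $b\wedge x=0$ and $b\wedge y=0$ together force $b\wedge(x\vee y)=0$. Let $L$ be a uniquely complemented nondistributive lattice. Since $L$ is uniquely complemented, Theorem~7.21 applies to it as a biconditional; since $L$ is not distributive, the ``distributive'' side of that biconditional is false, hence so is the other side. Negating ``every nonzero element of $L$ contains a nonzero regular element'' gives precisely: there is a nonzero $x\in L$ below which no nonzero regular element of $L$ lies, i.e.\ $x$ contains no nonzero regular element. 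That is the content of the proposition.

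The one place where care is needed --- and where the real, if modest, friction sits --- is the reading of the statement rather than any genuine mathematical difficulty. The negation of a $\forall$-statement is an $\exists$-statement, so what the proof actually delivers is that \emph{at least one} nonzero element of a uniquely complemented nondistributive lattice has no nonzero regular element at or below it; the sentence ``each element does not contain a nonzero regular element'' must be understood in that existential sense, since some elements always do contain a nonzero regular element --- for instance $1$ is vacuously regular (the hypothesis $1\wedge x=0$ already forces $x=0$) and contains itself. With that understanding in place there is no remaining obstacle; the deduction uses none of the structural Propositions 7.1--7.8. If one preferred an explicit witness to a bare existence statement, one could instead work inside a concrete uniquely complemented nondistributive lattice --- one of Dilworth's freely generated examples --- and exhibit the offending element $x$ by hand, but that is considerably more laborious and unnecessary for the present claim.
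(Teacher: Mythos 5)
Your proof is correct and is essentially the paper's own (implicit) justification: the proposition is obtained by contraposition from the immediately preceding theorem reported by Gr\"{a}tzer from Sali\u{\i}'s book, exactly the pattern used to pass from Theorems 7.2, 7.4 and 7.6 to Propositions 7.3, 7.5 and 7.7. Your observation that the conclusion must be read existentially --- some nonzero element contains no nonzero regular element, since $1$ is always a nonzero regular element lying below itself, so the literal ``each element'' phrasing cannot hold --- is a sound correction of the statement's loose wording, not a gap in your argument.
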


Sali\u{\i} recognizes that we know very little about uniquely complemented lattices. Among the positive properties of uniquely complemented lattices with comparable complements (UCC lattice in short), we still have the following:

 \begin{thm}[\cite{Sal2}]
 Let $L$ be a UCC lattice. If $a < b \in L$, then $b \wedge a>0$.
 \end{thm}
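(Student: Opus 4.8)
The plan is to argue by contradiction, and first to settle the reading of the conclusion: as printed, ``$a<b$ implies $b\wedge a>0$'' reduces to $a>0$ (since $a<b$ gives $b\wedge a=a$) and so fails for $a=0$; hence the intended assertion must be $b\wedge a'>0$, with $a'$ the unique complement of $a$. Under this reading I would suppose $a<b$ while $b\wedge a'=0$, and begin from the data furnished by the definition of a uniquely complemented lattice: $a\vee a'=1$, $a\wedge a'=0$, and the involution $(a')'=a$.

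Next, using only monotonicity of $\vee$ together with $a\le b$, one obtains $a'\vee b\ge a'\vee a=1$, whence $a'\vee b=1$; combined with the assumed $a'\wedge b=0$ this says precisely that $a'$ is a complement of $b$. Uniqueness of complements then forces $a'=b'$, and applying the involution gives $a=(a')'=(b')'=b$, contradicting $a<b$. Therefore $b\wedge a'>0$.

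I do not anticipate a genuine obstacle: the argument is a two-line consequence of unique complementation, and it does not even use the ``comparable complements'' restriction that names the UCC class -- that class is presumably just the ambient setting in which Sali\u{\i} records the remark. The two points deserving a little care are (i) invoking $(x')'=x$ and $x\vee x'=1$ explicitly from the definition rather than tacitly, and (ii) observing that the same computation dualizes to yield the companion statement $a\vee b'<1$ for $a<b$. As a consistency check on the reading, the alternative candidate $b'\wedge a>0$ is in fact always false for $a<b$ (since $a\le b$ forces $b'\wedge a\le b'\wedge b=0$), which confirms that the printed ``$a$'' should read ``$a'$''.
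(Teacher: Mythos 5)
Your proposal is correct, and in fact the paper offers nothing to compare it with: this theorem is quoted from Sali\u{\i}'s monograph \cite{Sal2} without any proof, so your argument has to stand on its own, which it does. Your emendation of the statement is surely the intended one: as printed, $a<b$ gives $b\wedge a=a$, so the claim fails already for $a=0<b$, whereas the very next theorem in the paper works with $c=b\wedge a'$, confirming that the printed ``$a$'' should be ``$a'$''. Under that reading your argument is sound: from $a\le b$ one gets $b\vee a'\ge a\vee a'=1$, so if also $b\wedge a'=0$ then $a'$ is a complement of $b$ as well as of $a$; unique complementation (equivalently, the involution $(x')'=x$, which you correctly derive from uniqueness rather than assuming it) forces $a=b$, contradicting $a<b$. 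Your side remarks check out too: $b'\wedge a\le b'\wedge b=0$ rules out the other candidate reading, and the dual computation gives $a\vee b'<1$.

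One caution concerns your dismissal of the ``comparable complements'' hypothesis. With the definition this paper gives (``uniquely complemented lattices with comparable complements''), the extra condition is vacuous and your proof indeed shows the statement holds in every uniquely complemented lattice; but that very vacuity suggests the paper's gloss of Sali\u{\i}'s class UCC may be inexact (e.g.\ the original class could require only that the complements of each element be pairwise comparable, not unique). If that is the actual setting, the step ``uniqueness of complements forces $a'=b'$'' is no longer available and the argument would need to be redone from the comparability hypothesis. Within the definitions stated in this paper, however, your proof is complete.
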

 
In the next theorem, "$a <b$" denotes that "a is covered by b", that is, $a < b$ and there is no element in between.

 \begin{thm}[\cite{Sal2}]
 Let $L$ be a UCC lattice. If $a <b$, then $c = b \wedge a'$ is an atom, and it is a relative complement of $a$ in $[0, b]$.
 \end{thm}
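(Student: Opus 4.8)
The plan is to dispatch the two assertions in turn: first that $c=b\wedge a'$ is a relative complement of $a$ in $[0,b]$, which is a short manipulation with the covering relation together with the immediately preceding theorem, and then that $c$ is an atom, which is the step where being a UCC lattice does the real work.

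First I would verify the relative-complement claim. Write $c=b\wedge a'$. Since $a$ is covered by $b$ we have in particular $a<b$, so the preceding theorem yields $c=b\wedge a'\neq 0$. From $a\leq b$ one gets $a\wedge c=(a\wedge b)\wedge a'=a\wedge a'=0$, and from $c\leq b$ one gets $a\leq a\vee c\leq b$; as $a$ is covered by $b$, either $a\vee c=a$ or $a\vee c=b$, and the first is impossible, for it would force $c\leq a$, whence $c\leq a\wedge a'=0$, contradicting $c\neq 0$. So $a\vee c=b$, and $c$ is a complement of $a$ inside the interval $[0,b]$.

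To prove that $c$ is an atom, take any $d$ with $0<d\leq c$ and aim at $d=c$. Since $d\leq c\leq a'$ and $d\leq c\leq b$, the computation above applies word for word with $d$ in place of $c$: one gets $a\wedge d=0$ and $a\vee d\in\{a,b\}$ with the value $a$ excluded (it would give $d\leq a\wedge a'=0$), hence $a\vee d=b$. So $d$, like $c$, is a relative complement of $a$ in $[0,b]$, and $d\leq c$.

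The remaining task, forcing $d=c$, is the step I expect to be the main obstacle, and it is precisely here that the hypothesis of \emph{comparable complements} must be used. The idea is that this hypothesis rigidifies the complementation into an order-reversing map: if $x<y$ then $x'$ and $y'$ are comparable; they cannot be equal, since $x'=y'$ gives $x=x''=y''=y$; and $x'<y'$ is impossible, for it would give $x'\wedge y\leq y'\wedge y=0$ while $y\vee x'\geq x\vee x'=1$, so that $x'$ would be a complement of $y$ and hence equal $y'$ by uniqueness; therefore $y'<x'$. Thus the quasi-identity $x\leq y\Rightarrow x'\geq y'$ holds in $L$, and by the earlier theorem stating that this quasi-identity forces distributivity in a uniquely complemented lattice, $L$ is distributive; and in a distributive lattice relative complements are unique (also an earlier theorem), so the two relative complements $c$ and $d$ of $a$ in $[0,b]$ must coincide. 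Hence every $d$ with $0<d\leq c$ equals $c$, so $c$ is an atom, and by the first part it is moreover a relative complement of $a$ in $[0,b]$ — which is what was claimed. The one genuinely non-routine point is spotting that comparable complements is the lever for antitonicity of $'$, and thereby for distributivity and uniqueness of relative complements; everything else is bookkeeping with meets, joins, and the covering $a<b$.
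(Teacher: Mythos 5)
The paper itself states this result without proof (it is quoted from Sali\u{\i}'s book), so there is no in-paper argument to compare yours against; it can only be judged on its own terms and against the other theorems the paper quotes. The first half of your argument is correct, and in fact sharper than you advertise: that $c=b\wedge a'$ is a relative complement of $a$ in $[0,b]$ uses only unique complementation and the covering hypothesis, not comparability at all. (Incidentally, the preceding theorem is misprinted in the paper as ``$b\wedge a>0$''; your own two-line repair --- if $b\wedge a'=0$ then $a'$ is a complement of $b$, so $a'=b'$ and $a=b$ --- both fixes the misprint and makes the appeal to that theorem unnecessary.)

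The atom half is where I am not convinced. Everything there hinges on your reading of the phrase ``comparable complements,'' which the paper never defines precisely, as: complements of comparable elements are comparable. Under that reading your chain of deductions is internally valid (comparability plus uniqueness forces $x\leq y\Rightarrow y'\leq x'$; the quoted quasi-identity theorem then gives distributivity; uniqueness of relative complements in a distributive lattice forces $d=c$), but it proves far too much: every UCC lattice would be a Boolean algebra, so the theorem you were asked to prove --- and the paper's accompanying remark that an atomic UCC lattice is dually atomic --- would collapse into trivialities about Boolean algebras. That is plainly not how Sali\u{\i} (or the paper) intends the class: it is introduced as a family of uniquely complemented lattices about which nontrivial ``positive properties'' can still be established, and a covering statement of this kind is exactly what one proves directly, \emph{without} knowing distributivity; the antitonicity-implies-distributivity theorem you lean on is the deep endpoint of that circle of ideas, so deriving this elementary lemma from it inverts the logical order of the source even if it is formally admissible inside the paper, which quotes both as black boxes. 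If the intended hypothesis is anything weaker than comparability of $x'$ and $y'$ for \emph{all} comparable pairs (for instance, comparability only for covering pairs, which would leave room for nondistributive examples), then your pivotal step --- antitonicity, hence distributivity, hence uniqueness of relative complements --- is unavailable, and you have no argument that an element $d$ with $0<d\leq c$ must equal $c$. So: the relative-complement claim is proved; the atom claim is proved only modulo a guessed reading of the hypothesis that trivializes the whole class, and a direct argument from the comparability hypothesis (which is what Sali\u{\i}'s proof must supply) is still missing.
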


Such a theorem shows, for instance, that if a UCC lattice is atomic, then it is dually atomic, and that the covering relation is preserved at least under some joins and meets.

Concerning the Dilworth's lattice - the L15 object of Kwuida's classification (see \cite{Kwu}, 157) - its positive properties would be semi-complementation, dual-semi-complementation, complementation and weak-complementation (see also \cite{Kal}). Kwuida further adds that this lattice is infinite, that its complementation is automatically an involution, and that it cannot be antitone nor relatively complemented.

However, these observations are about complementation, not about non-distributivity. Moreover, the existence of the uniquely complemented nondistributive lattice remains a mystery, and this, although Sali\u{\i}, using a result of Adam and Sichler (see \cite{Ada}), has proved in his book that there is a variety\footnote{The concept of {\it lattice variety} (see Definition 7.4) is very special. Historically, and according to \cite{Jip}, "the study of lattice varieties evolved out of the study of varieties in general, which was initiated by Garrett Birkhoff in the 1930's. He derived the first significant results in this subject, and further developments by Alfred Tarski and later, for congruence distributive
varieties, by Bjarni J\'{o}nsson, laid the groundwork for many of the results about lattice
varieties. During the same period, investigations in projective geometry and modular
lattices, by Richard Dedekind, John von Neumann, Garrett Birkhoff, George Gr\"{a}tzer,
Bjarni J\'{o}nsson and others, generated a wealth of information about these structures,
which was used by Kirby Baker and Rudolf Wille to obtain some structural results about
the lattice of all modular subvarieties. Nonmodular varieties were considered by Ralph
McKenzie, and a paper of him published in 1972 stimulated a lot of research in this direction.
Since then the efforts of many people have advanced the subject of lattice varieties in
several directions, and many interesting results have been obtained".} which contains it.

To understand this proposition, let us first introduce a definition of the concept of {\it lattice variety}: 

\begin{dfn}[Lattice varieties]
 Let $\mathcal{E}$ be a set of lattice identities (equations), and denote by $Mod\ \mathcal{E}$ 
the class of all lattices that satisfy every identity in $\mathcal{E}$. A class $V$ of lattices is a lattice
variety if:
\[
V = Mod\ \mathcal{E}
\]
for some set of lattice identities $\mathcal{E}$. 
\end{dfn}
The class of all lattices, which we will denote by $\mathcal{L}$, is of
course a lattice variety since $\mathcal{L} = Mod\ 0$. One also frequently encounter the following
lattice varieties:
\[
T = Mod\{x = y\}
\]
\[
V = Mod\{xy + xz = x(y + z)\}
\]
\[
M = Mod\{xy + xz = x(y + xz)\}
\]
all trivial lattices, all distributive lattices, all modular lattices.

Let us come now to the result of Sali\u{\i}:
\begin{thm}
The variety $V$ of lattices defined by the identity:
\[
(x \wedge (y \vee z)) \vee (y \wedge (x \vee t)) = ((x \wedge (y \vee z)) \vee y) \wedge (x \vee (y \wedge (x \vee t)))
\]
contains nondistributive uniquely complemented lattices.
\end{thm}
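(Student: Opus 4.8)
It is convenient to abbreviate the displayed identity as $p\vee q=(p\vee y)\wedge(x\vee q)$, where $p:=x\wedge(y\vee z)$ and $q:=y\wedge(x\vee t)$. Since $p\leq x$ and $q\leq y$, one has $p\vee q\leq(p\vee y)\wedge(x\vee q)$ in every lattice, so $V$ is exactly the variety of lattices in which the reverse inequality holds under all substitutions. The plan is in two parts: first to see that $V$ is a \emph{proper, nondistributive} variety — so that the theorem genuinely refines Dilworth's embedding and is not vacuous — and then to produce, inside $V$, a uniquely complemented lattice that is not distributive, by carrying out Dilworth's embedding (equation~(2)) \emph{relative to $V$}, following Adams and Sichler (\cite{Ada}).

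The first part is a finite matter. Whenever $x\in\{0,1\}$, or $y\in\{0,1\}$, or $x=y$, the identity degenerates to a triviality valid in all lattices; so to test it in $M_{3}$ or in $N_{5}$ it suffices to take $x,y$ distinct and each different from $0$ and $1$, and then $p$ and $q$ take only a couple of values each, leaving a handful of cases, all of which check out. Hence $M_{3}, N_{5}\in V$, so by Theorem 4.6 the variety $V$ is not contained in the variety of distributive lattices. That $V$ is \emph{proper} — i.e. $V\neq\mathcal{L}$ — follows from Whitman's solution of the word problem for the free lattice: with $x,y,z,t$ its four free generators, all four Whitman alternatives for the inequality $(p\vee y)\wedge(x\vee q)\leq p\vee q$ fail, so the identity is violated in $F_{\mathcal{L}}(4)$. (It is exactly because $V$ lies strictly between the join of the varieties generated by $M_{3}$ and $N_{5}$ and the variety $\mathcal{L}$ of all lattices that the statement has content.)

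For the second part, fix a nondistributive lattice $K_{0}\in V$ that contains a copy of $N_{5}$ — one may simply take $K_{0}=N_{5}$, or the free $V$-lattice on countably many generators. Dilworth's construction embeds $K_{0}$ into a uniquely complemented lattice by building an increasing $\omega$-chain $K_{0}\subseteq K_{1}\subseteq K_{2}\subseteq\cdots$, where $K_{n+1}$ is a conservative one-step extension of $K_{n}$ — adjoining fresh complements and introducing no new relations among the old elements — realised as a sublattice of a suitable absolutely free lattice. The whole idea here is to perform the \emph{same} construction but within $V$: at every step one uses the free $V$-lattice in place of the absolutely free lattice. Put $\overline{K}=\bigcup_{n<\omega}K_{n}$.

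Three things then remain to be verified, and the third is the crux. (a) Each $K_{n}$ — and hence, a variety being closed under directed unions, also $\overline{K}$ — lies in $V$; this is built into the relativised construction. (b) $\overline{K}$ is uniquely complemented: every element enters at some stage and is given a unique complement at the next, and, just as in Dilworth's bookkeeping, no later stage destroys that complement or adds a second. (c) $\overline{K}$ is nondistributive: it has $K_{0}\supseteq N_{5}$ as a sublattice and the construction does not collapse that copy, so Theorem 4.6 applies. The real obstacle is to make (b) and (c) go through, i.e. to show that the one-step $V$-extension is genuinely conservative — that $K_{n}$ embeds in $K_{n+1}$ and that no unintended identifications are forced. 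This reduces to establishing a Whitman-type normal form — a solution of the word problem — for the free $V$-lattices involved, and it is precisely here that the particular, somewhat baroque shape of the defining identity is used: it is exactly weak enough for such a normal form to survive the successive adjunction of complements, while (as seen above) being strong enough to keep $V$ a proper variety. Carrying out that word-problem analysis is the Adams--Sichler result that Sali\u{\i} invokes; granting it, $\overline{K}$ is a nondistributive, uniquely complemented lattice lying in $V$, and the construction can be run so as to produce many such lattices — which is the theorem.
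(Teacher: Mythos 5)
The paper itself contains no proof of this theorem: it is quoted from Sali\u{\i}'s book as resting on the Adam--Sichler construction, and the surrounding text (Section 9.4) only sketches that machinery. So there is no internal argument for you to match, and your attempt has to be judged on its own. Your first part is correct and is a genuine addition: the reduction to the non-degenerate cases ($x,y\notin\{0,1\}$, $x\neq y$) is right, the case check showing $M_{3},N_{5}\in V$ goes through (in both lattices $p=x\wedge(y\vee z)$ and $q=y\wedge(x\vee t)$ take only the values forced by the five-element structure, and each combination gives equality), and the Whitman-condition computation showing $(p\vee y)\wedge(x\vee q)\not\leq p\vee q$ in $F_{\mathcal{L}}(4)$ is correct, so $V$ is a proper nondistributive variety. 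Moreover $N_{5}\in V$ (or $M_{3}\in V$) is genuinely needed, since the relativized embedding theorem only yields a nondistributive uniquely complemented lattice when applied to a nondistributive member of $V$; the paper never records this step.

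The gap is in your second part, and it sits exactly at the decisive point. Everything reduces to the claim that the stepwise free adjunction of complements can be performed \emph{inside} $V$ conservatively -- equivalently, that this particular identity is one of those for which the Adams--Sichler word-problem analysis of free $V$-extensions succeeds -- and you simply ``grant'' that. That is not a peripheral lemma to be cited; it is the entire content of the theorem. Nothing in your proposal, beyond the remark that the identity is ``weak enough,'' connects the displayed identity to the normal-form machinery, verifies that $V$ is (or contains) one of the $2^{\aleph_{0}}$ Adam--Sichler varieties, or shows that the one-step extension neither collapses the embedded $N_{5}$ nor creates second complements. As it stands, your text is an accurate and useful roadmap of the strategy used in the literature (and implicitly relied on by the paper), augmented by correct verifications of the peripheral facts, but at the crux it is a citation presented as a proof rather than a proof.
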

 
Anyway, as we have seen, R. P. Dilworth has already disclosed the existence of such lattices in 1945, by proving the following theorem:

\begin{thm}[Dilworth]
Every lattice is a sublattice of a lattice with unique complements.
\end{thm}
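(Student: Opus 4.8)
The plan is to follow Dilworth's original 1945 strategy, which is a two-stage free construction. The first stage builds a lattice in which every element \emph{acquires} a complement, and the second stage iterates this to a limit so that the complements are forced to be \emph{unique}. I would begin with an arbitrary lattice $L$ and, for each element $a \in L$ not already having a complement, freely adjoin a new element $\bar a$ together with the relations $a \wedge \bar a = 0$ and $a \vee \bar a = 1$ (after first embedding $L$ in a bounded lattice if it lacks $0$ or $1$). The technical heart of this first stage is to show that the lattice $L^{(1)}$ generated by $L \cup \{\bar a : a \in L\}$ subject only to these relations actually \emph{contains $L$ as a sublattice} — i.e. that adjoining the complements does not collapse any previously distinct elements of $L$, nor create any new order relations among old elements. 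This is where one needs a careful normal-form / word-problem analysis for the relatively free bounded lattice on these generators and relations; Dilworth handles it by an explicit description of the lattice of polynomials modulo the imposed identities, and I would reproduce that combinatorial bookkeeping here.

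The second stage is the limit construction. Set $L = L^{(0)}$, and having built $L^{(n)}$, apply the first-stage construction to obtain $L^{(n+1)} \supseteq L^{(n)}$, with each new element of $L^{(n)}$ receiving a complement in $L^{(n+1)}$. Let $\hat L = \bigcup_{n} L^{(n)}$ be the directed union. Then $\hat L$ is a lattice containing $L$ as a sublattice, and every element of $\hat L$ lies in some $L^{(n)}$, hence has at least one complement in $L^{(n+1)} \subseteq \hat L$; so $\hat L$ is complemented. The delicate point — and the one I expect to be the genuine obstacle — is \emph{uniqueness} of complements in $\hat L$. The construction as described does not obviously prevent a single element from picking up two complements at different stages. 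Dilworth's resolution is to refine the first-stage free adjunction so that it is carried out \emph{only for elements that currently have a unique candidate situation}, or equivalently to build into the defining relations of $L^{(n+1)}$ enough identities to collapse any two complements of the same element into one, while \emph{still} verifying (again via the word-problem analysis) that no old elements are identified. Making this refinement precise, and checking that the identifications it forces are exactly the ones wanted and no more, is the crux; it is essentially the content of Dilworth's "long and technical" paper alluded to earlier in this excerpt.

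Finally, once $\hat L$ is shown to be a uniquely complemented lattice with $L$ as a sublattice, the theorem follows immediately, and as a corollary one recovers the existence of nondistributive uniquely complemented lattices by taking $L = M_3$ (or $N_5$): by Theorem 5.6 the resulting $\hat L$ cannot be distributive, since it contains a copy of $M_3$ (resp. $N_5$) as a sublattice, yet it is uniquely complemented by construction. I would close by remarking that this is precisely why every known such example is "freely generated one way or another" — the uniqueness of complements is extracted from a free construction rather than found in nature, which is exactly the state of affairs the three questions of Gr\"{a}tzer in Section 2 ask us to improve upon.
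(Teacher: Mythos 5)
There is a genuine gap, and it sits exactly where you admit the difficulty lies: the uniqueness of complements in the limit. Your proposed remedy --- to ``build into the defining relations of $L^{(n+1)}$ enough identities to collapse any two complements of the same element into one'' --- is not how any of the known proofs work, and it is not a viable repair: imposing such identifications on a freely generated extension is precisely the kind of step that can collapse the embedded copy of $L$ (or force further unintended identifications), and you give no mechanism for controlling it. What the actual arguments establish is the opposite: the free adjunction \emph{creates no new complemented pairs at all}, so there is nothing to collapse. In the Chen--Gr\"{a}tzer and Gr\"{a}tzer--Lakser versions described in Section 9, the key lemma is that if $K$ is almost uniquely complemented (bounded, every element has at most one complement) and one freely adjoins a single new element $u$ with $a \wedge u = 0$, $a \vee u = 1$, then in the resulting free lattice $F(Q)$ the only complement of $u$ is $a$ and the only complemented pairs are those already present in $K$ together with $(a,u)$; it is this invariant --- almost unique complementation preserved under free one-point adjunction --- that survives the transfinite induction over noncomplemented elements and the subsequent $\omega$-iteration, and that makes the union uniquely complemented. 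Your sketch iterates the adjunction but never identifies or proves this invariant, so the claim that $\hat{L}$ has \emph{unique} complements is unsupported.

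You also misattribute the strategy: Dilworth's own 1945 proof is not ``adjoin complements, then iterate.'' As recalled in Section 9.1, it proceeds by (A) embedding the given order in a free lattice, (B) extending to the free lattice with a unary operator $a^{*}$, (C) passing to the free lattice with a \emph{reflexive} unary operator, $(a^{*})^{*}=a$, and (D) taking a homomorphic image in which $a^{*}$ becomes a complementation; uniqueness then follows from the structure theorems for the free object, not from any collapsing of rival complements. Your first-stage embedding claim (that $L$ survives into the free extension) is a real technical step but you correctly flag it as a Whitman-style normal-form analysis; the concluding corollary via $M_{3}$ or $N_{5}$ and Theorem 4.6 is fine. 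The proof, however, cannot be considered complete until the uniqueness invariant above (or Dilworth's $*$-operator route) is actually supplied.
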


As Dilworth comments, "thus any nondistributive lattice is a sublattice of a lattice with unique complements which a fortiori is {\it not} a Boolean algebra" (see \cite{Dil2}, 123). As a nondistributive lattice cannot be a sublattice of a distributive lattice, there necessarily exists a nondistributive lattice with unique complements.

As reported by Gr\"{a}tzer, Dilworth's result has been found several times. In 1964 it was still proved by Dean (see \cite{Dea}) who "extends Dilworth's free lattice generated by an order $P$ to the free lattice generated by an order $P$ {\it with any number of designated joins and meets}" (see \cite{Gra2}, 699). But Dean's proof was the same complicated induction as the one in \cite{Dil2}. Subsequently, a greatly simplified proof can be found in H. Lakser's Ph.D. thesis, 1968 (see \cite{Lak}). Then Chen and Gr\"{a}tzer proved that we could greatly simplify the proof of Dilworth's theorem by skipping 2 of the 4 steps (the crucial steps 2 and 3), and proving a slightly stronger theorem (see \cite{Che}; \cite{Gra3})\footnote{We will develop these results in section 9.}.

As all known nondistributive uniquely complemented lattices - according to Gr\"{a}tzer - are freely generated, it is important to take a closer look at what free lattices are - what we will do now.

\section{Free lattices}
 Let us start first with a quite informal definition.
 
\begin{dfn}[\cite{Kau}]
A free lattice generated by $n$ generators $x_{1}, x_{2}, ... ,x_{n}$ is a lattice where appear {\it all} the possible combinations obtained with these $n$ distinct elements and the $\wedge$ and $\vee$ operations.
\end{dfn}

With $X_{n} = \{x_{1}, x_{2}, ... ,x_{n}\}, n \ge 3$, we can easily see, by enumeration, that the free lattices have an infinite number of elements (this has been proved by \cite{Whi2}, see below, theorem 8.1). We have, for example,
\[
(((((x_{1} \wedge x_{2}) \vee x_{3}) \wedge x_{1}) \vee x_{2}) \wedge .....),
\]
and continue like this ad infinitum. As we will see, the properties of commutativity, associativity, idempotence and absorption do not allow limiting the enumeration as soon as $n$ is equal to 3.

\subsection{Modular free lattices with $n$ generators}

If we impose the property of modularity (see definition 3.5) on the free lattice $X_{3}$, the number of its elements becomes finite and drops to 28 (a result of Dedekind, see \cite{Gra1}, 49).

\subsection{Distributive free lattices with $n$ generators}

If we now impose one or the other of the distributivity properties (see definition 3.8), the lattice only comprises 18 elements\footnote{For a proof, see \cite{Gra1}, 49.}, certain elements, among the 28 of the corresponding modular lattice, being equivalent to others as a result of the distributive property (for example, $(x_{1} \wedge x_{2}) \vee (x_{1} \wedge x_{3}) = x_{1} \wedge (x_{2}) \vee x_{3})).$

 \begin{figure}[h] 
    \vspace{-1\baselineskip}
	   \hspace{-1\baselineskip}	  
	   \includegraphics[width=7in]{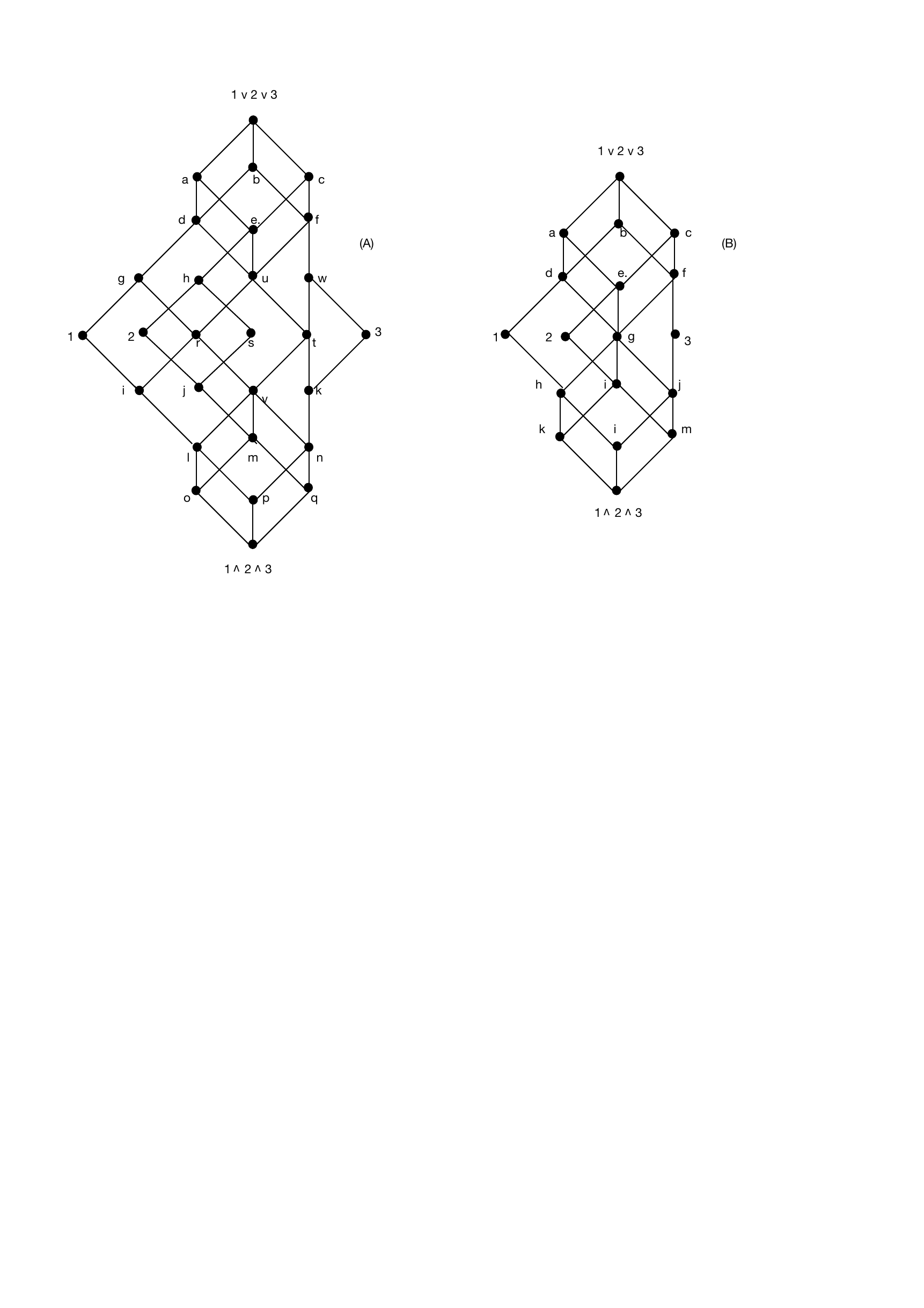} 
	      \vspace{-28\baselineskip}
	   \caption{Modular (A) and distributive (B) free lattices}
	   \label{fig: TL1}
	\end{figure}	

Legend of diagrams \\

Modular lattice (A)

$a: 1 \vee 2 ;		\hspace{12\baselineskip}					l: (1 \wedge 2) \vee (1 \wedge 3)$; 

$b: 1 \vee 3;			\hspace{12\baselineskip}				m: (1 \wedge 2) \vee (2 \wedge 3)$; 

$c: 2 \vee 3;				\hspace{12\baselineskip}			n:  (1 \wedge 3) \vee (2 \wedge 3)$; 

$d: (1 \vee 2) \wedge (1 \vee 3);\hspace{8\baselineskip}     o: 1 \wedge 2$; 

$e: (1 \vee 2) \wedge (2 \vee 3);	\hspace{8\baselineskip}	p: 1 \wedge 2$;	

$f: (1 \vee 3) \wedge (2 \vee 3);	       \hspace{8\baselineskip}	q: 2 \wedge 3$;

$g: 1 \vee (2 \wedge 3);			\hspace{10\baselineskip}	r: [1 \wedge (2 \vee 3)] \vee (2 \wedge 3)$;

$h: 2 \vee (1 \wedge 3):			\hspace{9.5\baselineskip}  s: [2 \wedge (1 \vee 3)] \vee (1 \wedge 3)$;

$i: 1 \wedge (2 \vee 3)			\hspace{10.5\baselineskip}	t: [3 \wedge (1 \vee 2)] \vee (1 \wedge 2)$.

$j: 2 \wedge (1 \vee 3);			\hspace{10\baselineskip}	u: (1 \wedge 2) \vee (2 \wedge 3) \vee (1 \wedge 3)$; 

$k: 3 \wedge (1 \vee 2);			\hspace{10\baselineskip}	v: (1 \vee 2) \wedge (2 \vee 3) \wedge (1 \vee 3)$; 

$\hspace{15\baselineskip} w: 3 \vee (1 \wedge 2)$.\\ \\. \\

Distributive lattice (B)

$a: 1 \vee 2 ;		\hspace{12\baselineskip}					h:  1 \wedge (2 \vee 3)$; 

$b: 1 \vee 3;			\hspace{12\baselineskip}				i: 2 \vee (1 \wedge 3)$; 

$c: 2 \vee 3;				\hspace{12\baselineskip}			j: 3 \wedge ( 1 \vee 2$; 

$d: 1 \vee (2 \wedge 3);      \hspace{10\baselineskip}     			k: 1 \wedge 2$; 

$e: 2 \vee (1 \wedge 3);  		  \hspace{10\baselineskip}						l: 1 \wedge 3$;

$f: 3 \vee (1 \wedge 2);		  \hspace{10\baselineskip}		 				       m: 2 \wedge 3$;

$g: (1 \wedge 2) \vee (1 \wedge 3) \vee (2 \wedge 3)
       = (1 \vee 2) \wedge (1 \vee 3) \wedge (2 \vee 3)$.

By imposing the distributive property, we check that all distributive free lattices are unique for given and finite $n$. We thus find the values of Table 1.

\begin{table}[htp]
\begin{center}
\begin{tabular}{|c|c|}
\hline
    Generators ($n$) & Elements ($N$) \\\hline
       1 &  1 \\
       2 & 42 \\
       3 & 18 \\
       4 & 166 \\
       5 & 7,579 \\
       6 & 7,828\ 532 \\
       ... & ... \\\hline
\end{tabular}
\caption{Number of generators and number of elements in the free lattice}
\end{center}
\label{default}
\end{table}%

\subsection{Enumeration of the free lattices}

\begin{dfn}
We will call {\it monomial in $\wedge$} (resp. in $\vee$) an expression where we will find only monomial in $\wedge$ (resp. in $\vee$).
\end{dfn}

\begin{dfn}
We will call {\it polynomial} an expression comprising both monomials in $\wedge$ joined by $\vee$ or, conversely, monomials in $\vee$ joined by $\wedge$.
\end{dfn}

Free lattices, especially interesting in view of generating a nondistributive uniquely complemented lattice, have been studied, as we said, by Philip M. Whitman in the 1940s. Whitman defines free lattices in \cite{Whi1}, answering the question of how to compare two "lattice polynomials" and showing that, given a polynomial, there is a shortest polynomial equal to it in the free lattice.

 Whitman also proves several important theorems in \cite{Whi2}:

\begin{thm}
$FL(3)$ has $FL(n)$ as a sublattice for any finite or countable $n$.
\end{thm}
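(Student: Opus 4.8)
The plan is to reduce the whole statement to the single embedding $FL(\aleph_0)\hookrightarrow FL(3)$ and then to produce that embedding explicitly, verifying freeness against Whitman's solution of the word problem for free lattices (cf. \cite{Whi1}). The reduction is formal: for every $n\le\aleph_0$ the homomorphism $FL(n)\to FL(\aleph_0)$ determined by $x_i\mapsto x_i$ is split by the homomorphism $FL(\aleph_0)\to FL(n)$ sending $x_i\mapsto x_i$ for $i\le n$ and $x_j\mapsto x_1$ for $j>n$; hence it is injective, and its image is the sublattice of $FL(\aleph_0)$ generated by $x_1,\dots,x_n$. So $FL(n)$ embeds into $FL(\aleph_0)$ for every finite or countable $n$, and it suffices to exhibit inside $FL(3)=FL(\{x,y,z\})$ a countably infinite family $\{g_1,g_2,\dots\}$ whose generated sublattice is free on it.

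The instrument is Whitman's word problem. In a free lattice every element has a unique shortest representing term, and $p\le q$ is decided by a terminating recursion whose pivotal clause is Whitman's condition $(W)$: if $p_1\wedge p_2\le q_1\vee q_2$ then $p_1\le q_1\vee q_2$, or $p_2\le q_1\vee q_2$, or $p_1\wedge p_2\le q_1$, or $p_1\wedge p_2\le q_2$; this is supplemented by the clauses that resolve a generator against a meet or a join. Now a family $\{g_i\}$ determines a homomorphism $h:FL(\aleph_0)\to FL(3)$ with $x_i\mapsto g_i$, and freeness of the generated sublattice is exactly injectivity of $h$, which for a lattice homomorphism is the same as order-reflection: $h(p)\le h(q)\Rightarrow p\le q$. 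So everything comes down to choosing the $g_i$ so that this implication can be forced by induction on term length using only the clauses of Whitman's algorithm.

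Concretely I would take the $g_i$ to be terms in $x,y,z$ involving all three generators but alternating $\wedge$ and $\vee$ in a nested pattern whose depth and the placement of $z$ encode $i$, arranged so that the family is ``Whitman-independent'': each $g_i$ is already in shortest form, the $g_i$ are pairwise incomparable, and no $g_i$ is comparable with a proper lattice combination of the others beyond what $(W)$ forces. Granting such a family, the argument is a descent: a hypothetical relation $h(p)\le h(q)$ is peeled apart by $(W)$ and the generator clauses, and at each step the mutual non-interference of the $g_i$ forces the case taken to be the one mirroring a genuine reduction step for $p\le q$ in $FL(\aleph_0)$; iterating down to the generators yields $p\le q$. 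Hence $h$ is order-reflecting, $h(FL(\aleph_0))\cong FL(\aleph_0)$, and composing with the reduction above embeds $FL(n)$ into $FL(3)$ for every finite or countable $n$.

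The main obstacle is precisely this last step: producing an explicit family $\{g_i\}$ for which the Whitman recursion closes with no accidental comparability slipping in. The reductions and the overall scheme are routine, but pinning down the right terms and checking them against the word-problem algorithm is the delicate combinatorial heart of the matter --- and it is exactly here that Whitman's original argument does its work.
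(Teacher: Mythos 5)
Your overall scheme is the right one, and it is in fact the scheme of Whitman's original argument, which is all the paper itself offers here: Theorem 8.1 is quoted from \cite{Whi2} without proof. Your reduction is correct: the sublattice of $FL(\aleph_0)$ generated by $x_1,\dots,x_n$ is free on those generators (the retraction argument is fine), and for a lattice homomorphism injectivity is indeed equivalent to order-reflection, so everything does come down to exhibiting elements $g_1,g_2,\dots$ of $FL(3)$ on which the generated sublattice is free, verified through Whitman's condition $(W)$ and the generator clauses of the word-problem algorithm.

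But that is precisely where your argument stops being a proof. You never specify the family $\{g_i\}$: ``terms in $x,y,z$ alternating $\wedge$ and $\vee$ in a nested pattern whose depth encodes $i$, arranged so that the family is Whitman-independent'' is a description of the desired conclusion, not a construction, and the descent step (``the mutual non-interference of the $g_i$ forces the case taken to be the one mirroring a genuine reduction step'') cannot even be checked without concrete terms --- most natural-looking candidate families do acquire accidental comparabilities or collapse under the lattice identities, which is exactly why the choice is delicate. You acknowledge this yourself, which is honest, but it means the combinatorial core of the theorem --- Whitman's explicit, recursively defined sequence of alternating terms in the three generators together with the $(W)$-based induction showing that no relation holds among them beyond those forced in $FL(\aleph_0)$ --- is missing. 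As it stands the proposal is a correct reduction plus a plan for the hard part; to complete it you must either produce and verify such a family or cite the construction in \cite{Whi2} (or the treatment in \cite{Fre}) for that step.
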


\begin{cor}
For $n \ge 3$ and any finite or countable $k$, $FL(n)$ has $FL(k)$ as a sublattice.
\end{cor}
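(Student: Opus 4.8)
The plan is to deduce the corollary from Theorem 8.1 together with one elementary fact about free algebras: if $FL(X)$ is the free lattice on a set $X$ and $Y\subseteq X$, then the sublattice of $FL(X)$ generated by $Y$ is itself free on $Y$. First I would establish this fact. Let $\langle Y\rangle$ denote the sublattice of $FL(X)$ generated by $Y$. By the universal property of $FL(Y)$, the inclusion $Y\hookrightarrow\langle Y\rangle$ extends to a lattice homomorphism $\bar\iota\colon FL(Y)\to\langle Y\rangle$, and $\bar\iota$ is surjective because $Y$ generates $\langle Y\rangle$. To see it is injective, fix an element $y_{0}\in Y$ and define a map $r\colon FL(X)\to FL(Y)$ sending each $y\in Y$ to itself and each $x\in X\setminus Y$ to $y_{0}$; this extends to a homomorphism $r$ by the universal property of $FL(X)$. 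The composite $r\circ\bar\iota\colon FL(Y)\to FL(Y)$ fixes every generator in $Y$, hence equals the identity (the identity map on $Y$ extends to a \emph{unique} endomorphism of $FL(Y)$). Therefore $\bar\iota$ is injective, so $\langle Y\rangle\cong FL(Y)$.

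Next I would apply this to the case at hand. Write $FL(n)=FL(X)$ with $X$ a set of $n$ generators (or a countable set, in the infinite case). Since $n\ge 3$, we may pick $Y=\{x_{1},x_{2},x_{3}\}\subseteq X$; by the fact just proved, the sublattice of $FL(n)$ generated by $Y$ is isomorphic to $FL(3)$. Thus $FL(3)$ occurs as a sublattice of $FL(n)$ whenever $n\ge 3$.

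Finally I would invoke Theorem 8.1, according to which $FL(3)$ has $FL(k)$ as a sublattice for every finite or countable $k$. Since being a sublattice is transitive, $FL(k)$ is a sublattice of $FL(3)$, which in turn is a sublattice of $FL(n)$; hence $FL(n)$ has $FL(k)$ as a sublattice, for all $n\ge 3$ and all finite or countable $k$, which is the assertion of the corollary.

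I do not expect a serious obstacle: the whole substantive content — realizing copies of $FL(k)$ inside a lattice on merely three generators — is already carried by Theorem 8.1, and the only point here needing a little care is the verification that a set of free generators generates a free sublattice, which reduces, as above, to exhibiting the explicit retraction $r$ and appealing twice to the universal property of free lattices.
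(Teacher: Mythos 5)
Your argument is correct and is exactly the intended derivation: the paper (following Whitman) states the corollary as an immediate consequence of Theorem 8.1, via the observation that three of the free generators of $FL(n)$ generate a copy of $FL(3)$, and transitivity of the sublattice relation. Your retraction argument showing that a subset of free generators generates a free sublattice is the standard and correct way to justify that observation, so nothing is missing.
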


\begin{thm}
$FL(n)$ has no sublattice isomorphic to the free modular or distributive lattice on more than two generators.
\end{thm}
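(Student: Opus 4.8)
The plan is to derive the statement from \emph{Whitman's condition} (W), the combinatorial law governing the order of free lattices established in \cite{Whi1}: in $FL(n)$,
\[
a\wedge b\leq c\vee d \ \Longrightarrow\ a\leq c\vee d \ \text{ or }\ b\leq c\vee d \ \text{ or }\ a\wedge b\leq c \ \text{ or }\ a\wedge b\leq d
\]
for all $a,b,c,d$. First I would observe that (W) is a universal implication between inequalities built only from $\wedge$, $\vee$ and $\leq$, and that meets, joins and the induced order of a sublattice coincide with those of the ambient lattice; hence (W) is inherited by every sublattice. Consequently it suffices to show that the free distributive lattice $FD(3)$ and the free modular lattice $FM(3)$ on three generators $x,y,z$ each \emph{fail} (W): then no lattice containing a copy of $FD(3)$ or of $FM(3)$ --- in particular no $FL(n)$ --- can satisfy (W), contradicting \cite{Whi1}.

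For $FD(3)$ I would take $a=x\vee y$, $b=x\vee z$, $c=x$, $d=y\wedge z$. Distributivity gives $a\wedge b=(x\vee y)\wedge(x\vee z)=x\vee(y\wedge z)=c\vee d$, so the hypothesis of (W) holds with equality; write $m$ for this common value. The four conclusions of (W) say precisely that $x\vee y$ or $x\vee z$ lies below $m$, or that $m$ lies below $x$ or below $y\wedge z$ --- four \emph{strict} inequalities, all false in $FD(3)$. I would check this from the explicit $18$-element description of $FD(3)$ recalled above, or, self-containedly, by composing with homomorphisms onto finite Boolean lattices: the homomorphism $FD(3)\to 2^{3}$ sending the generators to the three atoms sends $m$ to the image of $x$, an atom, which is below neither the image of $x\vee y$ nor that of $x\vee z$ and is not below the image of $y\wedge z$ (namely $0$); while the homomorphism onto $2^{2}$ sending $x$ to one atom and both $y$ and $z$ to the other shows $m\not\leq x$. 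Hence (W) fails in $FD(3)$.

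The modular case runs in parallel, with the modular law in place of distributivity. Take $a=x\vee y$, $b=x\vee z$, $c=x$, $d=y\wedge(x\vee z)$; since $x\leq x\vee z$, modularity yields
\[
a\wedge b=(x\vee y)\wedge(x\vee z)=x\vee\big(y\wedge(x\vee z)\big)=c\vee d=:m .
\]
Once more the four conclusions of (W) are strict inequalities false in the $28$-element lattice $FM(3)$: $x\vee y\not\leq m$, $x\vee z\not\leq m$ and $m\not\leq y\wedge(x\vee z)$ follow from the homomorphism sending $x,y,z$ to three lines in general position in the subspace lattice of a $3$-dimensional vector space (there $m$ maps to the image of $x$, a line, while the images of $x\vee y$ and $x\vee z$ are planes and the image of $d$ is $0$), and $m\not\leq x$ follows from the homomorphism sending $x,y,z$ to the three atoms of $M_{3}$ (both target lattices being modular). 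So $FM(3)$ also fails (W).

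Finally, for every cardinal $\kappa\geq 3$ the free modular (resp.\ distributive) lattice on $\kappa$ generators contains $FM(3)$ (resp.\ $FD(3)$) as the sublattice generated by three of its free generators, so it is excluded by the same argument; this proves the theorem. The bound ``two'' is sharp, since on at most two generators the free lattice, the free modular lattice and the free distributive lattice all coincide with the four-element lattice $2\times 2$, which is exactly the sublattice of $FL(n)$ generated by two generators whenever $n\geq 2$. The only genuine computation in the proof is the verification that $FD(3)$ and $FM(3)$ do not collapse the four strict inequalities above; I expect this (routine) step to be the main obstacle to a fully self-contained write-up, though it is conceptually transparent --- the identity $(x\vee y)\wedge(x\vee z)=x\vee(y\wedge(x\vee z))$ forced by modularity, and its distributive specialisation, is a prototypical failure of Whitman's condition.
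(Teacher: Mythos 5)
Your proof is correct: Whitman's condition (W) does hold in $FL(n)$ (this is the content of \cite{Whi1}), it is inherited by sublattices since it is a universally quantified implication in the lattice operations and order, your quadruples witness its failure in $FD(3)$ and $FM(3)$ (the homomorphism checks into $2^{3}$, $2^{2}$, the subspace lattice of a three-dimensional space, and $M_{3}$ are all valid), and the reduction from more than three generators to three via the sublattice generated by three free generators is the standard free-algebra fact. The paper states the theorem without proof, simply citing Whitman's \emph{Free lattices II} \cite{Whi2}, and your (W)-based argument is essentially the classical route going back to Whitman himself; the only quibble is terminological --- the four conclusions of (W) are non-strict inequalities (each equivalent, given $m=a\wedge b=c\vee d$, to an equality with $m$), not ``strict'' ones, though the statements you refute are exactly the right ones.
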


Now, recall that, when we seek to approximate the elements in a partial order by much simpler elements, we are led to a class of continuous posets, consisting of posets where every element can be obtained as the supremum of a directed set of elements that are way-below the element. If one can additionally restrict these to the compact elements of a poset for obtaining these directed sets, then the poset is even algebraic. Both concepts can be applied to lattices, so that a continuous lattice is a complete lattice that is continuous as a poset. In this context, Whitman proves the following theorem:

\begin{thm}
$FL(n)$ is continuous\footnote{The theory of continuous lattices was carried out by Dana Scott (see \cite{Sco1}, \cite{Sco2}, \cite{Sco3}) and also, for a more recent reference and the study of the Scott and Lawson topologies, \cite{Gie1}. It led to the discovery that these generalization of algebraic lattices could be used to assign meanings to programs written in high-level programming languages. On the purely mathematical side, research into the structure theory of compact semilattices led Lawson (\cite{Law}) and others (\cite{Gie2}; \cite{Hof}) to consider the category of those compact semilattices which admit a basis of subsemilattice neighborhoods at each point. It was discovered in \cite{Hof} that those objects are precisely the continuous lattices of Scott. One of the most important features of continuous lattices is that they admit sufficiently many homomorphisms (that is, mappings which preserve arbitrary infs and directed sups) into the unit interval to separate the points. The topological form of this result is due to Lawson.}.
\end{thm}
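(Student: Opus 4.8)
Since $FL(n)$ is never complete for $n\ge 3$ (it is infinite by Theorem 8.1), the statement has to be read as \emph{conditional} continuity, i.e. as the two ``continuity laws'' that do make sense in an incomplete lattice: for every $a\in FL(n)$ and every up-directed $D\subseteq FL(n)$ for which $\bigvee D$ exists in $FL(n)$ one has $a\wedge\bigvee D=\bigvee_{d\in D}(a\wedge d)$, and dually for filtered meets. For $n\le 2$ the free lattice is finite and the statement is trivial, so I would assume $n\ge 3$ throughout; the argument below does not depend on $n$, and one should note that the embedding $FL(n)\subseteq FL(3)$ of Theorem~8.1 (and Corollary~8.2) is \emph{not} by itself enough, since a sublattice may compute different sups and infs, so the work has to be done inside $FL(n)$ directly.

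The engine is Whitman's solution of the word problem for free lattices (\cite{Whi1}): every $w\in FL(n)$ has a unique shortest (``canonical'') representative, and the order is decided by Whitman's condition (W), namely $p_1\wedge p_2\le q_1\vee q_2$ holds iff $p_i\le q_1\vee q_2$ for some $i$ or $p_1\wedge p_2\le q_j$ for some $j$. The plan is to combine (W) with the finiteness of canonical forms to prove the structural lemma: \emph{if $D$ is up-directed in $FL(n)$ and $b=\bigvee D$ exists, then $b\in D$.} Granting the lemma, conditional join-continuity is immediate, for the join is attained at some $d^\ast\in D$ and then $a\wedge\bigvee D=a\wedge d^\ast\le\bigvee_{d}(a\wedge d)\le a\wedge\bigvee D$; and conditional meet-continuity follows because $FL(n)$ is order self-dual (the assignment $x_i\mapsto x_i$ extends to an anti-automorphism, which interchanges up-directed joins and filtered meets).

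To prove the lemma I would put $b$ in canonical form. If $b$ is join-irreducible, I would use the covering theory of free lattices to find a lower cover $b_\ast$ of $b$ with $d\le b_\ast$ for every $d<b$, so that $b_\ast$ would be an upper bound of $D$ strictly below $b=\bigvee D$, a contradiction; hence $b\in D$. If instead $b=q_1\vee\cdots\vee q_k$ is the (finite, irredundant) canonical join decomposition, I would apply (W) to each relation $d\le b$ with $d\in D$ to attach to $d$ a nonempty ``responsible'' set $S(d)\subseteq\{1,\dots,k\}$, check that $S$ is monotone along $D$, and exploit directedness together with $k<\infty$ to make $S$ stabilise at the full set $\{1,\dots,k\}$, which forces some $d\in D$ to dominate every $q_i$ and hence to equal $b$.

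The main obstacle is precisely this last step: converting the local, syntactic information handed over by (W) into the global statement that the directed join is actually attained. Two points will need genuine care. First, the relations $d\le b$ must be localised \emph{coherently} across the whole directed family rather than term by term --- this is where the uniqueness of canonical forms and the subterm structure of $b$ are essential, and where I expect most of the effort to go. Second, the join-irreducible case should not be handled by bare (W): one really needs weak atomicity of $FL(n)$ and the $\kappa$-machinery of the covering theory to guarantee the lower cover $b_\ast$. Once the structural lemma is established, the continuity statement and its dual are purely formal.
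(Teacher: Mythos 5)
Your reduction of the theorem to the structural lemma ``if $D$ is up-directed and $b=\bigvee D$ exists in $FL(n)$, then $b\in D$'' is the fatal step: that lemma is false, and it is false exactly where the theorem has content. Free lattices on $n\ge 3$ generators contain join-irreducible elements with no lower cover (this is the subject of the covering theory you invoke; see \cite{Fre}), and such an element is the join of a directed family (indeed of a chain) of elements strictly below it --- a non-attained existing join. Indeed, if every existing directed join were attained, Whitman's continuity theorem would be a triviality; it is stated and proved as a theorem in \cite{Whi2} precisely because $FL(n)$ has nontrivial existing joins of infinite chains. Your own proof of the lemma also does not close in either case. In the join-irreducible case you need the lower cover $b_{*}$, i.e.\ complete join-irreducibility of $b$; weak atomicity does not supply it, and the elements arising as non-attained directed joins are exactly the join-irreducible elements without lower covers, so the argument assumes away the only interesting case. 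In the join-reducible case, the stabilisation of the ``responsible'' sets only produces some $d\in D$ with $d\le b$ but $d\not\le\bigvee_{i\in S}q_{i}$ for every proper $S$; this does not force $q_{i}\le d$, hence not $d=b$ (compare $b=x\vee y$ and $d=(x\vee y)\wedge z$, which lies below $b$, below neither joinand, and is not $b$).

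For comparison: the paper gives no proof at all --- it is a survey quoting Whitman's results --- and the proof in \cite{Whi2} (also in \cite{Fre}) does not go through attainment of joins. One shows directly that $a\wedge\bigvee C$ is the least upper bound of $\{a\wedge c:c\in C\}$ whenever $\bigvee C$ exists, by induction on the canonical form (rank) of the elements involved, with Whitman's condition (W) and the canonical-form calculus doing the work at each step; the dual law then follows as you say from the self-duality of $FL(n)$. The parts of your write-up that survive are the conditional reading of ``continuous'', the reduction of the meet law to the join law by duality, and the identification of (W) and canonical forms as the right tools; the structural lemma and everything resting on it must be discarded.
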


Whitman also proves:

\begin{thm}
$FL(n)$ is not complete for $n \ge 3$.
\end{thm}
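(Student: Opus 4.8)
The plan is to reduce the statement to the single case $FL(3)$, and then to locate inside $FL(3)$ a bounded subset whose least upper bound cannot exist, the non\-existence being squeezed out of Whitman's solution of the word problem.

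\emph{Step 1: reduction to $n=3$.} Fix three of the free generators, say $x_1,x_2,x_3$, and let $\rho\colon FL(n)\to FL(n)$ be the lattice endomorphism determined by $\rho(x_1)=x_1$, $\rho(x_2)=x_2$, $\rho(x_3)=x_3$ and $\rho(x_i)=x_1$ for every other generator $x_i$; such a homomorphism exists by the universal property of the free lattice, and it is automatically isotone. It is standard (and implicit in Corollary 8.2) that the sublattice $S_0:=\langle x_1,x_2,x_3\rangle$ is a copy of $FL(3)$, and $\rho$ is the identity on $S_0$, so $\rho$ is a retraction of $FL(n)$ onto $S_0$. Now if some $A\subseteq S_0$ had a least upper bound $s$ in $FL(n)$, then $\rho(s)$ would lie in $S_0$, would dominate every $a\in A$ (since $\rho(s)\ge\rho(a)=a$), and would be below every upper bound $u\in S_0$ of $A$ (since $s\le u$ gives $\rho(s)\le\rho(u)=u$); hence $A$ would already have a least upper bound in $S_0\cong FL(3)$. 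Thus it suffices to exhibit one bounded subset of $FL(3)$ with no supremum in $FL(3)$: the conclusion for every $FL(n)$, $n\ge 3$ (finite or infinite), follows by contraposition.

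\emph{Step 2: a bounded strictly increasing sequence.} Work in $FL(3)$ with free generators $x,y,z$. The intention is to choose an isotone unary lattice polynomial $g(w)$ (a term in $w,x,y,z$) and a starting element $w_0$ so that the iterates $w_{k+1}:=g(w_k)$ form a \emph{strictly} increasing sequence $w_0<w_1<w_2<\cdots$ bounded above by a fixed element of $FL(3)$ (a join of generators will do). That such bounded strictly monotone sequences exist in $FL(3)$ is, in essence, the content of Theorem 8.1; the verification that a concrete $g$ does the job --- that the iterates really are pairwise distinct and stay below the chosen bound --- is made with Whitman's algorithm, which assigns to each element a canonical (shortest) representative and decides comparisons. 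A warning belongs here: the most obvious recursions collapse after one or two steps because of absorption, so $g$ must be designed with care; selecting the right $g$ is already part of the difficulty.

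\emph{Step 3: ruling out the supremum, and the main obstacle.} Suppose $\{w_k\}$ had a supremum $s$ in $FL(3)$. Two routes are available. \emph{(a) Directly:} show $s$ cannot be the \emph{least} upper bound by producing, from the canonical form of an arbitrary upper bound $u$ of $\{w_k\}$, a strictly smaller term that is still an upper bound (roughly, truncating $u$ at a bounded depth); since this makes the set of upper bounds have no minimum, no supremum exists. \emph{(b) Via continuity:} if $FL(3)$ were complete it would, being continuous by Theorem 8.4, be a continuous lattice and hence meet\-continuous, so meet distributes over the directed join $\bigvee_k w_k$; unfolding $g$ term by term would then give $g(s)=\bigvee_k g(w_k)=\bigvee_k w_{k+1}=s$, i.e. $s$ would be a fixed point of $g$ --- and one finishes with the combinatorial lemma that the chosen $g$ has \emph{no} fixed point in $FL(3)$ (again a statement about canonical forms: $g(t)=t$ would force a collapse in the canonical form of $g(t)$ that the construction of $g$ has been arranged to forbid). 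In either presentation the soft parts --- the retraction of Step 1 and the passage ``$s$ exists $\Rightarrow$ $s$ is a fixed point'' --- are routine; the genuine obstacle, as expected, is the combinatorial heart of Steps 2--3: using Whitman's word problem to control canonical forms tightly enough both to build a chain that never stabilizes and to manufacture the strictly smaller upper bound (equivalently, to exclude the fixed point). This is precisely where Whitman's analysis of free lattices is indispensable.
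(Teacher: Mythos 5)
The paper itself offers no proof of this statement: it is quoted as one of Whitman's results from \cite{Whi2} and presented without proof, so your argument has to stand on its own, and it does not. Step 1 (the retraction of $FL(n)$ onto the sublattice generated by $x_1,x_2,x_3$, which reflects least upper bounds) is correct and routine. But Steps 2--3 are a plan, not a proof. You never specify the polynomial $g$, never produce the chain $w_0<w_1<\cdots$, never verify strict increase, and never carry out either the ``truncate an arbitrary upper bound'' argument or the ``no fixed point'' lemma; you explicitly defer all of this to unspecified work with Whitman's canonical forms and call it ``the genuine obstacle''. That obstacle is the entire mathematical content of the theorem. Note also that boundedness is automatic and cannot be the issue: every element of $FL(3)$ is a lattice polynomial in $x,y,z$, hence lies between $x\wedge y\wedge z$ and $x\vee y\vee z$, so \emph{every} subset is bounded; and the existence of infinite chains is not ``in essence the content of Theorem 8.1'' (which asserts that $FL(3)$ contains $FL(n)$ as a sublattice). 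The sole difficulty is that some specific chain has no \emph{least} upper bound, and that is exactly what is missing.

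Two further cautions on Step 3. In route (b) you invoke ``continuous lattice'' in the domain-theoretic sense, but that notion presupposes completeness, so the citation of Theorem 8.4 in that reading is at best circular; what you actually need is Whitman's form of continuity (meets distribute over joins of chains \emph{when those joins exist}), which would indeed give $g(s)=\bigvee_k g(w_k)=s$ if $s$ existed. Alternatively, if $FL(3)$ were complete, the Knaster--Tarski theorem already guarantees every isotone polynomial a fixed point, so continuity is not even needed: the whole theorem reduces to exhibiting one isotone unary polynomial over $FL(3)$ with no fixed point (or, in route (a), one explicit chain whose set of upper bounds has no minimum), verified through Whitman's condition on canonical forms. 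Until such a $g$ (or chain) is written down and the combinatorial verification is done, the proposal restates the theorem rather than proving it.
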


\begin{thm} 
$FL(4)$ has an infinite chain of distinct elements.
\end{thm}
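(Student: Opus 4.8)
The plan is to exhibit an explicit descending sequence of lattice polynomials in the four generators of $FL(4)$ and then to use Whitman's solution of the word problem for free lattices (\cite{Whi1}) to show that consecutive terms of the sequence are distinct. Fix generators $a,b,c,d$ of $FL(4)$, let
\[
\Phi(t)=a\wedge\bigl(b\vee\bigl(c\wedge(d\vee t)\bigr)\bigr),
\]
and set $t_{1}=a$ and $t_{n+1}=\Phi(t_{n})$ for $n\ge 1$. Each $t_{n}$ is an honest lattice term in $a,b,c,d$ whose ``alphabet'' cycles through the four generators $a,b,c,d,a,b,c,d,\dots$ as one descends into it; I claim that $t_{1}>t_{2}>t_{3}>\cdots$, which gives the desired infinite chain.

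First I would check that the chain is at least weakly descending, which uses nothing beyond the lattice axioms: $\wedge$ and $\vee$ are monotone, so $\Phi$ is order-preserving, and $\Phi(a)=a\wedge(\cdots)\le a$, i.e. $t_{2}\le t_{1}$; an immediate induction then gives $t_{n+1}=\Phi(t_{n})\le\Phi(t_{n-1})=t_{n}$ for all $n$.

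The real content is strictness, i.e. $t_{n}\not\le t_{n+1}$ for every $n$, and here I would invoke Whitman's recursive criterion for the relation $p\le q$ in a free lattice (\cite{Whi1}): if $p=p_{1}\vee p_{2}$ then $p\le q$ iff $p_{1}\le q$ and $p_{2}\le q$, and dually if $q=q_{1}\wedge q_{2}$; if one of $p,q$ is a generator the comparison unfolds in the evident way and two distinct generators are incomparable; and if $p=p_{1}\wedge p_{2}$, $q=q_{1}\vee q_{2}$ with neither a generator, then $p\le q$ iff $p_{1}\le q$ or $p_{2}\le q$ or $p\le q_{1}$ or $p\le q_{2}$ (Whitman's condition). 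Running this algorithm on the hypothetical inequality $t_{n}\le t_{n+1}$ and discarding at each node the branches that are manifestly impossible — those asserting one generator below another, or a term ``built around $a$'' below $b$ or $c$, and so on — one finds that exactly one branch survives at each stage, and that it reduces $t_{n}\le t_{n+1}$ (via an intermediate reduction to $t_{n-1}\le d\vee t_{n}$) to $t_{n-1}\le t_{n}$; iterating, the whole question collapses to $t_{1}=a\le t_{2}$. But $a\le t_{2}=a\wedge\bigl(b\vee(c\wedge(d\vee a))\bigr)$ would force $a\le b\vee(c\wedge(d\vee a))$, hence $a\le b$ or $a\le c\wedge(d\vee a)$, hence $a\le b$ or $a\le c$, and both are false. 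So $t_{n}\le t_{n+1}$ fails, $t_{n}>t_{n+1}$ for all $n$, and $\{t_{n}\}_{n\ge 1}$ is an infinite chain of pairwise distinct elements of $FL(4)$.

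The hard part is purely the bookkeeping in the previous paragraph: one must organise it as a simultaneous induction that also carries the auxiliary non-comparabilities used to kill the spurious branches (typically $t_{k}\not\le b$, $t_{k}\not\le c$, $a\not\le t_{k}$ for $k\ge 2$, $b\not\le t_{k}$, $c\not\le d\vee t_{k}$, and the like), each itself dispatched by a one-line application of Whitman's criterion. The case analysis is mechanical but long, and the only thing that really needs attention is verifying that at every node of the recursion precisely one branch is viable and that it is the ``recursive'' one. (By Corollary 8.2 one could also carry the argument inside a sublattice copy of $FL(3)\subseteq FL(4)$, since three generators already suffice to manufacture such a chain; the value $4$ in the statement is inessential, but working with four generators keeps the alphabet of $t_{n}$ repetition-free, which is what makes the word-problem analysis cleanest.)
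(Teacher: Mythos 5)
The paper itself offers no proof to compare against: the statement is listed, without argument, among the results Whitman established in \cite{Whi2}, so your proposal has to stand on its own. On its own terms it is sound, and it is the classical free-lattice argument in spirit: iterate a monotone polynomial whose variables cycle through the generators, get weak descent from monotonicity together with $\Phi(a)\le a$, and refute $t_n\le t_{n+1}$ by Whitman's solution of the word problem. I traced the case analysis you describe and it does close: writing $t_n=a\wedge B_n$ with $B_n=b\vee\bigl(c\wedge(d\vee t_{n-1})\bigr)$ and $C_n=c\wedge(d\vee t_n)$, the top-level application of (W) to $t_n\le b\vee C_n$ kills every branch on an incomparability of distinct generators except $B_n\le b\vee C_n$, which funnels through $c\wedge(d\vee t_{n-1})\le d\vee t_n$ and then $t_{n-1}\le d\vee t_n$, and that last inequality reduces, by one more application of (W) together with the auxiliary facts $a\not\le t_k$, $b\not\le d\vee t_k$, $t_k\not\le d$, to $t_{n-1}\le t_n$, i.e.\ to the induction hypothesis; the base case $a\not\le t_2$ is exactly as you refute it. Two points deserve explicit care in a write-up: the step $n=2$ must be run separately, since there $t_{n-1}=a$ is a generator rather than a meet (it reduces directly to $a\le t_2$); and the auxiliary incomparabilities really must be carried along as part of a simultaneous induction, since $a\not\le t_k$ is itself proved by unwinding $a\le B_k$ and is used inside the main step. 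The only shaky sentence is the closing parenthetical: it is not evident that three generators ``already suffice'' for this particular chain, because with a repeated letter several (W)-branches no longer die for the trivial reason that distinct generators are incomparable; what Theorem 8.1/Corollary 8.2 do give you is a copy of $FL(4)$ (indeed of $FL(\omega)$) inside $FL(3)$, which transfers the chain without redoing the analysis. Nothing in your main argument depends on that aside.
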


Whitman ended by stating a number of open problems, the most important being the following ones: do there exist infinite connected chains in $FL(n)$? Can we study the free complete lattice and the completion of $FL(n)$ by cuts? And finally, can we determine the finite sublattices of $FL(n)$?\footnote{Since that time, free lattices have been also studied by Dean, Freese, 
Je\v{z}ek, Je\v{z}ek and Slav\v{\i}k, J\v{o}nsson, J\v{o}nsson and Kiefer, J\v{o}nsson and Nation, Wille... A bibliography may be found in \cite{Fre}.}

None of the questions posed by Whitman seem to have been answered yet. In an article dating from the 1980s (see \cite{Sal4}), Sali\u{\i} noted that,"since Dilworth discovered that any lattice may be isomorphically embedded in a suitable uniquely complemented lattice (i.e. in a lattice with 0 and 1 in which every element has one and only one complement), not much about this class of close-to-Boolean lattices has become known" (see \cite{Sal3}). At this time - and still now - it was an open problem if such lattice might be complete without being distributive. Birkhoff and Ward (see \cite{Bir3}) have proved that it does not have to be atomic and in \cite{Sal1}, as we have seen, it was shown that the property of being compactly generated also implies distributivity. Another fact concerning complete uniquely complemented lattices was stated in \cite{Sal3}, namely that every such lattice is isomorphic to a direct product of a complete atomic Boolean lattice and a complete atomless uniquely complemented lattice. In this latest article, Sali\u{\i} deal with so-called regular elements in a uniquely complemented lattice. It only was shown that regular elements form a complete Boolean sublattice.

The study of free lattices is also essential because, as Adam and Sichler remarked,, "the lattices constructed by R. P. Dilworth in \cite{Dil2} contain the free lattice on countably many generators as a sublattice. Hence, in particular, any nontrivial lattice identity fails to hold in any of Dilworth's lattices" (see \cite{Ada}). By a nontrivial identity, Adam and Sichler mean an identity that does not follow from the lattice axioms. In this context, a growing conjecture has been that any uniquely complemented lattice that satisfies a nontrivial lattice identity is distributive. However, though different authors add numerous restrictions that lead to define a uniquely complemented distributive lattice, this is not indicative of the general situation: Adam and Sichler have shown that there are 2$^{\aleph_{0}}$ varieties of lattices for which Dilworth's theorem holds.

More recently, John Harding (see \cite{Har1}) was able to prove the following result:
\begin{thm}
For $\kappa$ an infinite cardinal, every complete at most UC-lattice can be regularly embedded into a $\kappa$-complete UC-lattice.
\end{thm}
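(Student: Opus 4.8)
The plan is to imitate the Dilworth--Dean free construction of uniquely complemented extensions, but to iterate it transfinitely, adjoining at each stage both the missing complements and the joins and meets of all subsets of size $<\kappa$, and then to pass to the union of a chain of length $\kappa^{+}$.

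\textbf{One-step lemma.} First I would isolate the following one-step statement: if $K$ is a bounded lattice in which every element has \emph{at most} one complement, then there is a bounded lattice $K^{\sharp}\supseteq K$ such that (a) the inclusion $K\hookrightarrow K^{\sharp}$ is a regular embedding, i.e.\ it preserves every join and meet that exists in $K$; (b) $K^{\sharp}$ is again at most uniquely complemented; (c) every element of $K$ has a complement in $K^{\sharp}$; and (d) every subset of $K$ of cardinality $<\kappa$ has a join and a meet in $K^{\sharp}$. The lattice $K^{\sharp}$ is built freely, in the spirit of \cite{Dil2} and \cite{Dea}: one takes the free lattice on the set consisting of the elements of $K$, a new symbol $a'$ for each $a\in K$, and new symbols for the join and meet of each $S\subseteq K$ with $|S|<\kappa$, and then passes to the quotient by the least lattice congruence that forces the relations already valid in $K$ (including its existing infinite joins and meets, which is what will give regularity), the equalities $a\wedge a'=0$ and $a\vee a'=1$, and the requirement that the new symbols behave as the relevant least upper and greatest lower bounds. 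Items (c) and (d) are then built in, and the content of the lemma is (a) and (b).

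\textbf{Iteration and union.} Set $L_{0}=L$, $L_{\alpha+1}=(L_{\alpha})^{\sharp}$, $L_{\gamma}=\bigcup_{\alpha<\gamma}L_{\alpha}$ for limit $\gamma$, and $M=\bigcup_{\alpha<\kappa^{+}}L_{\alpha}$. The routine points are: being at most uniquely complemented passes to unions, since two complements of a common element already lie together in some $L_{\alpha}$, where the finite identities $x\wedge y=0$ and $x\vee y=1$ persist, so the two complements coincide; regular embeddings compose and pass through such unions, so $L\hookrightarrow M$ is regular and, $L$ being complete, all of its joins and meets are preserved. Every $x\in M$ lies in some $L_{\alpha}$, hence acquires a complement in $L_{\alpha+1}\subseteq M$, and that element is its unique complement in $M$; thus $M$ is uniquely complemented. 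Finally, since $\kappa^{+}$ is regular, any $S\subseteq M$ with $|S|<\kappa$ lies in some $L_{\alpha}$ with $\alpha<\kappa^{+}$; at stage $\alpha+1$ the set $S$ receives a join and a meet, and regularity of $L_{\alpha+1}\hookrightarrow M$ preserves them, so $M$ is $\kappa$-complete (the $\le\kappa$ convention is handled the same way). This proves the theorem, and the length $\kappa^{+}$ is exactly what produces $\kappa$-completeness and nothing more, in keeping with the fact that full completeness remains open.

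\textbf{Where the difficulty lies.} Everything hinges on items (a) and (b) of the one-step lemma: injectivity and regularity of the canonical map $K\to K^{\sharp}$, and the inheritance of ``at most one complement''. As in Dilworth's original argument \cite{Dil2} and Dean's extension \cite{Dea}, this demands a Whitman-type analysis of the terms of the enlarged signature --- the lattice operations together with the complementation symbols and the symbols for $<\kappa$-ary joins and meets --- yielding a normal form and a comparison relation from which injectivity and preservation of the designated joins and meets follow, after which uniqueness of complements is extracted by showing, in the manner of Dilworth, that any $y$ with $x\wedge y=0$ and $x\vee y=1$ must have the normal form of the designated complement of $x$. Making Whitman's condition \cite{Whi1}, in the streamlined Chen--Gr\"{a}tzer form \cite{Che}, \cite{Gra3}, simultaneously accommodate complements and arbitrary small joins and meets is the real work.
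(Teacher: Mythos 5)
The paper does not prove this statement at all: it is quoted, without proof, from Harding \cite{Har1}, so there is no internal argument to compare with; your outline follows the expected Dilworth--Dean--Chen--Gr\"{a}tzer line that Harding adapts, and its global architecture (a one-step extension lemma, a transfinite iteration of length $\kappa^{+}$, unions at limits, and the regularity of $\kappa^{+}$ to catch small subsets) is the right shape. But there is a genuine gap: items (a) and (b) of your one-step lemma \emph{are} the theorem's entire mathematical content, and you do not prove them --- you only record that a Whitman-type analysis is needed. This is not a routine transcription of \cite{Dil2}, \cite{Dea}, \cite{Che}, \cite{Gra3}: once symbols for $<\kappa$-ary joins and meets are added the signature is infinitary, the normal-form and comparison machinery of \cite{Whi1} does not carry over mechanically (free \emph{complete} lattices on three generators do not even exist, so control of the term structure must be re-established for the $\kappa$-bounded case), and regularity --- preservation of \emph{all} existing joins and meets at every successor step --- is exactly the delicate point; the paper itself quotes Harding's earlier result \cite{Har2} that the MacNeille completion of a uniquely complemented lattice need not be complemented, a warning that completion-like extensions and complementation interact badly. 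Deferring these verifications is deferring the theorem.

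There is also a concrete error in the lemma as you state it: you adjoin a fresh complement symbol $a'$ for \emph{every} $a\in K$, including elements that already have a complement. The least congruence forcing $a\wedge a'=0$ and $a\vee a'=1$ does not identify $a'$ with an existing complement $b$ of $a$ (complements are not unique in general lattices, and in the Gr\"{a}tzer--Lakser free extension $F(Q)$ of \cite{Gra3} the adjoined element $u$ is a complement of $a$ \emph{in addition to} any complement $a$ already had), so $K^{\sharp}$ acquires an element with two complements, item (b) fails, and the at-most-unique-complementation property --- hence unique complementation of the final union $M$ --- is lost. The standard repair, described in Section 9.3 of this paper, is to adjoin complements only to the noncomplemented elements, one at a time (or by transfinite induction over them), proving at each step that the only complemented pairs of the extension are the old ones together with the new pair. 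With that correction, and with actual proofs of injectivity, regularity, and inheritance of at-most-unique complementation for the $\kappa$-ary free one-step extension, your iteration-and-union argument would go through; those proofs are precisely the substance of \cite{Har1}.
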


The same Harding proved before (see \cite{Har2} that:

\begin{thm}
The MacNeille completion of a uniquely complemented lattice need not be complemented.
\end{thm}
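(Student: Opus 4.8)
The statement is an existence claim, so the plan is to construct an explicit counterexample: a uniquely complemented lattice $L$ whose MacNeille completion $\overline{L}$ has an element with no complement. Such an $L$ must be non-Boolean — for a Boolean algebra the MacNeille completion is (by the classical fact) a complete Boolean algebra, hence complemented, and a distributive uniquely complemented lattice is Boolean by Theorems~5.2 and the definition of a Boolean algebra — so the only available raw material is a Dilworth-type lattice (Theorem~7.26). I would therefore fix a suitable lattice $K$, take $L$ to be the uniquely complemented lattice into which $K$ embeds (preserving $\wedge$, $\vee$, $0$, $1$) by Dilworth's construction, and keep in mind two facts that make the construction tractable: $L$ contains a copy of $FL(\aleph_{0})$ as a sublattice (Adam and Sichler; cf.\ Theorem~8.1), and Whitman's solution of the word problem describes exactly which lattice polynomials in the generators are comparable. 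As a sanity check and a source of intuition, note that since $L$ is nondistributive its complementation $x\mapsto x'$ is \emph{not} order-reversing (Theorem~7.15); hence it does not extend to a unary operation on $\overline{L}$, which is precisely why one should expect complementation to be fragile under completion.

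I would then realize $\overline{L}$ concretely as the complete lattice of Dedekind–MacNeille cuts of $L$, so that the embedding $L\hookrightarrow\overline{L}$ preserves $0$, $1$ and all existing joins and meets, and $L$ is both join-dense and meet-dense in $\overline{L}$. The only consequences of density I would use are the formulas, valid for all $a,b\in\overline{L}$,
\[
a\wedge b \;=\; \bigvee\nolimits^{\overline L}\{\, x\in L : x\le a,\ x\le b \,\}, \qquad a\vee b \;=\; \bigwedge\nolimits^{\overline L}\{\, y\in L : y\ge a,\ y\ge b \,\},
\]
together with the remark that every element of $L$ keeps in $\overline{L}$ the unique complement it had in $L$. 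Hence the culprit element $m$ with no complement must lie in $\overline{L}\setminus L$; equivalently, its down-set $m^{\downarrow}=\{x\in L:x\le m\}$ is a non-principal normal ideal. The formulas turn ``$b$ is a complement of $m$'' into a statement purely about $L$: the only $x\in L$ below both $m$ and $b$ is $0$, and the only $y\in L$ above both $m$ and $b$ is $1$. The task is to choose $L$ and $m$ so that no $b\in\overline{L}$ can meet both requirements at once.

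For the choice of $m$ I would take the supremum in $\overline{L}$ of a carefully selected subset $S$ of the copy of $FL(\aleph_{0})$ inside $L$ — chosen so that $S$ has no supremum in $L$, which is possible since free lattices are not complete (Theorem~8.5) and contain infinite chains (Theorem~8.6) — tuned so that: (i) any $b\in\overline{L}$ with $m\vee b=1$ is forced, via the structure of $m^{\uparrow}$, to dominate some particular nonzero polynomial $x_{0}\in L$ that already lies below $m$; and (ii) such an $x_{0}$ with $0<x_{0}\le m$ really exists. Granting (i) and (ii), any complement $b$ of $m$ would yield $x_{0}\le m$ and $x_{0}\le b$, hence $m\wedge b\ge x_{0}>0$, contradicting $m\wedge b=0$; therefore $m$ has no complement, $\overline{L}$ is not complemented, and $L$ is uniquely complemented by construction, which is the theorem.

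The crux — and the step I expect to carry essentially all the difficulty — is engineering $K$ and the set $S$ defining $m$ so that (i) and (ii) hold simultaneously: $m$ must be ``large enough'' that its would-be complements are squeezed into dominating $x_{0}$, yet ``small enough'' that $x_{0}$ still survives below it. Because $'$ is wild there is no candidate complement to exploit and no structural shortcut; one must compute, using Whitman's canonical forms and the explicit way complements are adjoined in Dilworth's construction, precisely which words in the generators lie above or below the cut $m$, and then verify that the joins and meets the cut structure predicts in $\overline{L}$ do not accidentally collapse to $0$ or $1$. The base lattice $K$ should be designed exactly to produce a configuration of generators for which both implications go through; the remainder of the argument is bookkeeping with cuts.
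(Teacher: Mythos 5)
The paper itself does not prove this statement: it is quoted as a result of Harding (\cite{Har2}) and stated without proof, so there is no internal argument to compare yours against line by line. Judged on its own terms, your text is a strategy, not a proof, and the gap sits exactly where you acknowledge it: the existence of a subset $S$ of the embedded free lattice whose cut $m=\bigvee^{\overline L}S$ satisfies your conditions (i) and (ii) is never established. Everything before that point is routine reformulation — realizing $\overline{L}$ as Dedekind--MacNeille cuts, noting that $L$ is join- and meet-dense, that complements in $L$ remain complements in $\overline{L}$, and that a complement-free element must come from a non-principal normal ideal — and everything after it is conditional on (i) and (ii). But (i) is an extremely strong demand: you need \emph{every} $b\in\overline{L}$ with $m\vee b=1$ to lie above one fixed nonzero $x_{0}\le m$, uniformly over all cuts $b$, and you give no mechanism in Dilworth's construction that would force this; ``tuned so that'' is doing all the work. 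Whether such a configuration exists is precisely the content of Harding's theorem (it settled a problem from Birkhoff's book), so the proposal in effect restates the problem in the language of cuts rather than solving it.

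Two smaller cautions. First, your heuristic that the complementation cannot be order-reversing (via Theorem 7.15) and therefore ``does not extend'' to $\overline{L}$ is fine as motivation but proves nothing about complementedness of $\overline{L}$: the completion could be complemented by elements unrelated to any extension of $x\mapsto x'$. Second, the technical machinery you point to (Whitman's word-problem canonical forms, the explicit description of how complements are adjoined in Dilworth's or Chen--Gr\"{a}tzer's construction) is indeed the right toolbox — Harding's actual argument works with freely generated uniquely complemented lattices and the structure of their cuts — but invoking the toolbox is not the same as carrying out the computation; until $K$, $S$, $m$ and $x_{0}$ are exhibited and (i)--(ii) verified, no counterexample has been produced and the theorem has not been proved.
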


Some problems on free lattices like this one - which lattices (and in particular which countable lattices) are sublattices of a free lattice? - and some other issues connected to  algorithmic may be found in \cite{Fre}.

\section{Some realizations of Dilworth's lattice}

As Schmerl says (see \cite{Sch}, 1366), less is known about infinite lattices, and it is only in the case of distributive lattices that had been significant results. In Chajda et al. (see \cite{Cha}, 31), we may read that "using free lattice techniques, Dilworth proved that every lattice is isomorphic with a sublattice of a lattice with unique complements. Hence there exists a uniquely complemented lattice containing the nonmodular $N_{5}$ as a sublattice which is, of course, not distributive". Maybe we can look at an infinite uniquely complemented lattice containing $N_{5}$ as a sublattice. Where to find a good candidate? 

\subsection{Gr\"{a}tzer's analysis of Dilworth's proof}

According to \cite{Gra2}, all known examples of nondistributive uniquely complemented lattices are freely generated one way or another. 

It is obviously the case of Dilworth's lattice itself. The four steps of its construction are the following: 

A. $P$ being a partially ordered set, the first step consists in imbedding $P$ in a lattice $L$ so that bounds, whenever they exist, of pairs of elements of $P$ are preserved. $L$ is namely the free lattice generated by $P$ in the sense that the only containing relations in $L$ are those which follow from lattice postulates and preservation of bounds (the methods employed are an extension of those used by Whitman in the study of free lattices) (see Fig. 8).

 \begin{figure}[h] 
    \vspace{-2\baselineskip}
	   \hspace{5\baselineskip}	  
	   \includegraphics[width=7in]{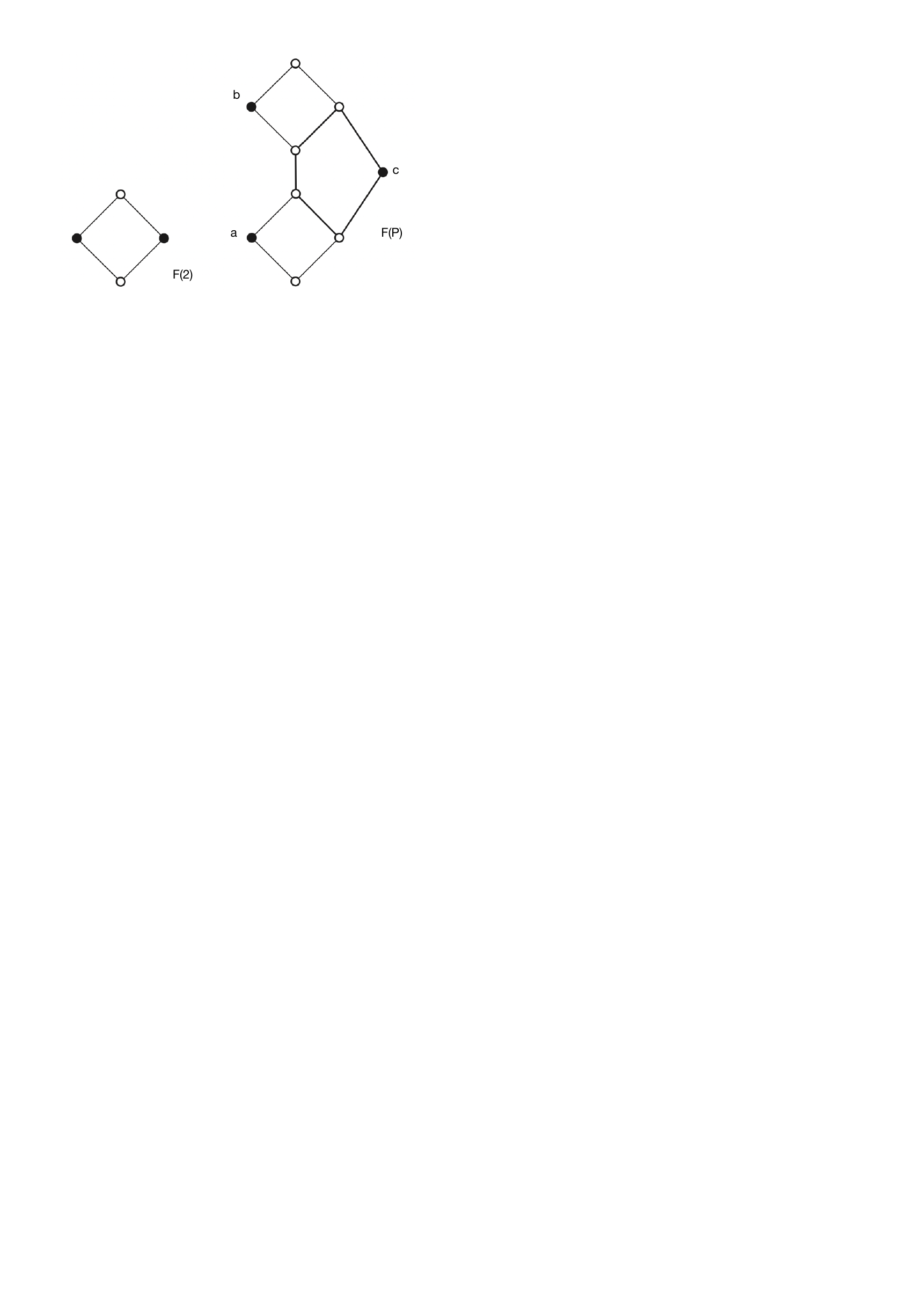} 
	      \vspace{-39\baselineskip}
	   \caption{Example of free lattices generated by an order}
	   \label{fig: Dilworth1}
	\end{figure}	

B. Next, the lattice $L$ is extended to a lattice $O$ with unary operator, that is, a lattice over which an operation $a^*$ is defined with the property:
\begin{equation}
a =b\ \textnormal{implies}\ a^* = b^*.
\end{equation}
The only containing relations in $O$ are those which follow from lattice postulates, preservation of bounds, and from (6). 

C. In the third step, a sublattice $N$ is selected from $O$ over which a new operation $a^*$ is defined for which (6) holds and also having the property:
\begin{equation}
 (a^*)^* = a.
 \end{equation}
Thus $N$ is a lattice with reflexive, unary operator. $N$ is again free in the sense that the only containing relations in $N$ are those which follow from lattice postulates, preservation of bounds, and the two properties (6) and (7).

D. Finally, a homomorphic image $M$ of $N$ is constructed in which the operation $a^*$ becomes a complementation. It follows from the structure theorems of $O$ that this complementation is unique. Furthermore, $M$ contains $P$ and is indeed the free lattice with unique complements generated by $P$.

In the end, one gets the following theorem:
\begin{thm}
The free lattice with unique complements generated by two elements contains as a sublattice the free lattice with unique complements generated by a denumerable set of elements.
\end{thm}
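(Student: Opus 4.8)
The plan is to prove this inside the concrete model $M = FUC(\{a,b\})$ supplied by Dilworth's four-step construction above, by exhibiting a denumerable family of elements of $M$ that generates a free uniquely complemented sublattice. The essential tool is that the construction comes with structure theorems: the lattice $N$, hence its homomorphic image $M$, is characterized as the lattice-with-involutorial-operator in which the only containments are those forced by the lattice axioms, by preservation of the bounds already present among the generators, and by the operator identities (6) and (7). Consequently the word problem for $M$ is decidable by an extension to the operator setting of Whitman's solution of the word problem for free lattices, and one can decide, for UC-terms $p(a,b)$ and $q(a,b)$, whether $p \le q$ holds in $M$.

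First I would fix a denumerable family $g_1, g_2, g_3, \dots \in M$ of UC-terms in the two generators, built by interleaving $a$, $b$ and the $*$-operator so as to destroy accidental comparability: a recursion of the shape $g_1 = a$, $g_2 = b$, $g_{n+1} = \bigl(g_n \wedge (a \vee b)\bigr)' \vee \bigl(g_{n-1} \wedge (a \vee b)\bigr)$ is the sort of thing one wants, but the exact recipe is immaterial as long as it produces a Whitman-independent set in the sense made precise next. Then I would verify, against Dilworth's normal form for elements of $M$, three facts: (i) the $g_n$ are pairwise distinct, pairwise incomparable, and different from $0$ and $1$; (ii) for all finite index sets $S$ and $T$ one has $\bigwedge_{i \in S} g_i \le \bigvee_{j \in T} g_j$ in $M$ exactly when this is forced, i.e. exactly when $S \cap T \neq \varnothing$; and (iii) the sublattice of $M$ generated by $\{g_n : n \ge 1\}$ is closed under the complementation of $M$, and this restricted complementation is still unique (which is automatic once (ii) holds, since uniqueness of complements passes to sublattices of $M$).

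Granting (i)--(iii), let $S$ be the sub-uniquely-complemented-lattice of $M$ generated by $\{g_n\}$. I would then argue that $S$ has the universal property of $FUC(\aleph_0)$: for any uniquely complemented lattice $K$ and any map $f\colon \{g_n\} \to K$, the assignment $g_n \mapsto f(g_n)$ extends to a homomorphism of uniquely complemented lattices $S \to K$, because by (i)--(iii) the only relations holding among the $g_n$ are the uniquely-complemented-lattice axioms and their consequences. Hence $S \cong FUC(\aleph_0)$, and since $S$ is a fortiori a sublattice of $M = FUC(2)$, this is the assertion. (If one reads the theorem as claiming only a sublattice of the lattice reduct, the same $S$ gives the weaker conclusion by the same argument.) One could also package the final step by invoking Whitman's Theorem 8.1 and its Corollary as a template, since the free-lattice case is the exact blueprint for what one must establish here in the presence of the operator.

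The hard part will be items (ii) and (iii): checking freeness forces one to run the normal-form reduction on arbitrary UC-terms built from the $g_n$, and the bookkeeping is delicate precisely because each $g_n$ is itself a deeply nested term in $a$ and $b$, so a putative relation among the $g_n$ unfolds into a large putative relation among $a,b$ that must be shown not derivable from the lattice axioms, the preserved bounds, and (6)--(7). The crux is to choose the recursion defining the $g_n$ so that this unfolding never accidentally produces a derivable relation --- so that $\{g_n\}$ behaves inside $M$ like a Whitman-independent set --- and this is exactly the step where Whitman's analysis of free lattices, as extended by Dilworth and later streamlined by Lakser and by Chen--Gr\"atzer, does the real work. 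The universal-property argument and the small-index cases are, by contrast, routine.
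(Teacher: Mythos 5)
The paper itself gives no proof of this theorem: it is Dilworth's 1945 result, quoted immediately after the outline of his four-step construction (the free lattice $L$ over the order, the operator lattice $O$, the reflexive-operator lattice $N$, and the quotient $M$), and the intended argument is Dilworth's own --- exhibit an explicit denumerable family of elements of the free uniquely complemented lattice on two generators and verify, against the structure (normal-form) theorems for $L$, $O$, $N$, $M$, that it freely generates, in exact parallel with Whitman's proof that $FL(3)$ contains $FL(\omega)$ as a sublattice. Your outline follows that template, so the overall shape is the right one; the problem is that, as written, the decisive step is not merely deferred but missing.

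Concretely: conditions (i)--(iii) do not yield the universal property you invoke. Condition (ii) excludes only relations of the unnested shape $\bigwedge_{i\in S} g_i \le \bigvee_{j\in T} g_j$, whereas freeness requires that no nontrivial relation between \emph{arbitrary} UC-terms in the $g_n$ holds; in particular you must control terms in which complements such as $g_n'$ or $(g_m\vee g_n)'$ occur. Closure of the generated subalgebra under $'$ (your (iii)) is perfectly compatible with, say, $g_1'$ coinciding with a lattice term in $g_2,g_3$, which would destroy freeness while (i)--(iii) all remain true; so the sentence ``the only relations holding among the $g_n$ are the axioms and their consequences'' is the whole content of the theorem, not a consequence of (i)--(iii). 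In the pure lattice case, the passage from ``no unnested relations'' to ``no relations at all'' is exactly the Whitman--J\'{o}nsson criterion that, in a lattice satisfying Whitman's condition (W), an antichain that is join-prime and meet-prime relative to itself generates a free sublattice; for the present theorem you need the analogue of both (W) and that criterion for lattices with the reflexive unary operator, and this is precisely what Dilworth's structure theorems supply and what your sketch never engages. Relatedly, declaring ``the exact recipe is immaterial'' concedes the theorem's actual content: the proof \emph{is} the exhibition of a specific sequence together with the normal-form verification, and your candidate recursion is neither shown to satisfy (i)--(iii) nor shown to keep the complements of its members generic. (A minor point: uniqueness of complements does not pass to sublattices in general, only to $\{0,1\}$-subalgebras closed under the complementation, which is what you in fact have once (iii) is secured.)
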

This proves the difference between free lattices with unique complements and Boolean algebras, since the free Boolean algebra generated by a finite number of elements is always finite.

\subsection{Dean's theorem and Lakser's simplification}

Dean's Theorem (see \cite{Dea}) has extended Dilworth's free lattice generated by an order $P$ to the free lattice generated by $P$ {\it with any number of designated joins and meets}. Then a greatly simplified proof can be found in Lakser's Ph.D. thesis, 1968 (see \cite{Lak}). But Chen and Gr\"{a}tzer, eliminating section 2 and 3 of Dilworth's article, found another proof in two steps in 1969. In fact, the result they proved is much stronger that Dilworth's one. This supposes the following definitions:

\begin{dfn}[Almost uniquely complemented lattice]
A lattice $L$ is said {\it almost uniquely complemented} if it is bounded and every element has at most one complement. 
 \end{dfn}
 
 \begin{dfn}[$\{0,1\}$-embedding]
 A $\{0,1\}$- embedding is an embedding that maps the 0 to 0 and the 1 to 1.
\end{dfn}

They proved the following theorem:

\begin{thm}
Let $L$ be an almost uniquely complemented lattice. Then $L$ can be $\{0, 1\}$-embedded into a uniquely complemented lattice.
\end{thm}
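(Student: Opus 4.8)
The plan is to reduce the statement to a single-step extension lemma and then iterate it $\omega$ times. Precisely, I would first establish the following. \emph{One-step Lemma:} if $L$ is an almost uniquely complemented lattice, then there exist an almost uniquely complemented lattice $\overline{L}$ and a $\{0,1\}$-embedding $L \hookrightarrow \overline{L}$ such that every element of $L$ has a complement in $\overline{L}$ (which is then automatically its unique complement, since $\overline{L}$ is almost uniquely complemented).

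Granting this, set $L_{0} = L$ and $L_{n+1} = \overline{L_{n}}$, so that each $L_{n}$ is almost uniquely complemented and we obtain a chain of $\{0,1\}$-embeddings $L_{0} \hookrightarrow L_{1} \hookrightarrow L_{2} \hookrightarrow \cdots$. Let $M = \bigcup_{n} L_{n}$ be the direct limit (union). Since each $L_{n} \hookrightarrow L_{n+1}$ is a $\{0,1\}$-embedding, $M$ is a bounded lattice with the same $0$ and $1$ throughout, and $L = L_{0}$ is $\{0,1\}$-embedded in $M$. Every $x \in M$ lies in some $L_{n}$, hence has a complement in $L_{n+1} \subseteq M$; so $M$ is complemented. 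If $x \in M$ had two complements $y, z \in M$, pick $n$ with $x, y, z \in L_{n}$; the relations $x \wedge y = x \wedge z = 0$ and $x \vee y = x \vee z = 1$ hold in $M$, hence in the sublattice $L_{n}$, so $y = z$ because $L_{n}$ is almost uniquely complemented. Thus $M$ is a uniquely complemented lattice into which $L$ is $\{0,1\}$-embedded. Note also that newly adjoined complements need never be re-complemented: if $a'$ is adjoined as a complement of $a$, then $a$ is already a complement of $a'$, so only the genuinely new joins and meets are dealt with at later stages.

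For the One-step Lemma I would use the free-lattice technique in the form of Dean's free lattice generated by a partial lattice with designated joins and meets (the extension of Whitman's construction referred to in the excerpt). Let $A \subseteq L$ be the set of elements with no complement in $L$; for each $a \in A$ introduce a fresh symbol $a'$, and let $\overline{L}$ be the free lattice generated by $L$ together with $\{a' : a \in A\}$, subject only to the relations already holding in $L$ and the designated meets and joins $a \wedge a' = 0$ and $a \vee a' = 1$ for $a \in A$. By construction every $a \in A$ acquires a complement and $0, 1$ remain bottom and top, so what remains is to check that $L \hookrightarrow \overline{L}$ is a $\{0,1\}$-embedding and that $\overline{L}$ is again almost uniquely complemented.

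The main obstacle is exactly this verification, which rests on a Whitman-type solution of the word problem for $\overline{L}$: one must describe, by structural induction on terms, precisely when two terms are comparable (or equal) in $\overline{L}$, the only comparabilities being those forced by the lattice axioms, absorption, the order of $L$, and the defining relations $a \wedge a' = 0$, $a \vee a' = 1$. From such a description one reads off (i) that $L$ sits inside $\overline{L}$ as a sublattice with unchanged $0$ and $1$; (ii) that for $a \in A$ the element $a'$ is the \emph{only} complement of $a$; (iii) that an element of $L$ with a unique complement in $L$ gains no new complement; and, the delicate point, (iv) that every genuinely new element of $\overline{L}$ (one not equal to a member of $L$) has at most one complement — indeed one expects most such elements to have none at all. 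Parts (iii) and (iv) are the technical core: they are what Dilworth obtained via his "lattice with unary operator $O$" and "reflexive unary operator $N$" structure theorems (steps B and C of his construction), and the point of the Chen--Gr\"{a}tzer simplification is that, because one starts from an \emph{almost} uniquely complemented lattice and adjoins complements orbit by orbit, those two intermediate stages collapse into the single free construction above, with the cancellation/word-problem analysis carried out in one pass. Once the word problem is settled, the $\omega$-iteration and the limit argument above are routine.
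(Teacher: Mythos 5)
Your overall architecture is the same as the paper's: freely adjoin complements in a way that preserves almost unique complementation, iterate, and pass to the union. Your $\omega$-chain and limit argument is correct and is exactly the paper's ``countable induction'' producing the uniquely complemented extension $K_{\omega}$, and your observation that uniqueness in the union follows from almost unique complementation of each stage is the right (and routine) verification. Where you differ is in the granularity of the one-step extension: the paper, following Gr\"{a}tzer and Lakser, adjoins a complement to a \emph{single} element at a time, forming the partial lattice $Q = K \cup \{u\}$ with $0 \leq u \leq 1$, $a \wedge u = 0$, $a \vee u = 1$, and the free lattice $F(Q)$ it generates; the decisive facts --- every polynomial $A$ has a largest $A_{*} \in K$ below it and a smallest $A^{*} \in K$ above it, the only complement of $u$ is $a$, and the only other complemented pairs in $F(Q)$ are those of $K$ --- are what keep $F(Q)$ almost uniquely complemented, and it is precisely the one-new-generator situation that makes them tractable; an inner transfinite induction over the noncomplemented elements then does the work of your single bulk step. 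You instead adjoin all missing complements simultaneously via a Dean-type free lattice with designated joins and meets, which buys a cleaner outer iteration but pushes the entire difficulty into your One-step Lemma.

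That One-step Lemma is where the gap lies: items (iii) and (iv) --- no element of $L$ acquires a new complement, and no new element acquires two --- are stated but not proved, and your remark that Dilworth's steps B and C ``collapse into the single free construction'' is an assertion of exactly the content of the Chen--Gr\"{a}tzer theorem, not a proof of it. With an arbitrary (possibly infinite) family of new generators $a'$ and relations $a \wedge a' = 0$, $a \vee a' = 1$ imposed at once, the word-problem analysis is no longer the ``easy computation'' of the single-element case; it is essentially the technical core of Dilworth's and Dean's papers. So either carry out (or explicitly cite, as the paper does) that structure theory for your bulk construction, or adopt the single-element extension of Gr\"{a}tzer--Lakser, in which case the verification is manageable and the rest of your argument goes through verbatim, at the cost of inserting a transfinite induction over the noncomplemented elements before your $\omega$-limit.
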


\subsection{Gr\"{a}tzer and Lakser's solution}

 In 2006, Gr\"{a}tzer and Lakser provided the one-step following solution:

Let $K$ be a bounded lattice. Let $a \in K-\{0,1\}$, and let $u$ be an element not in $K$. The authors extend the partial ordering $\leq$ of $K$ to $Q=K \cup \{u\}$ by $0 \leq u \leq 1$. They also extend the lattice operations $\wedge$ and $\vee$ to $Q$ {\it as commutative partial meet and join operations}. 

For $x \leq y$ in $Q$, they define $x \wedge y = x$ and $x \vee y=y$. In addition, they let $a \wedge u = 0$ and $a \vee u=1$, as can be seen in Fig. 9. 

\begin{figure}[h] 
\vspace{-1\baselineskip}  
 \hspace{9\baselineskip}	
    	   \includegraphics[width=7in]{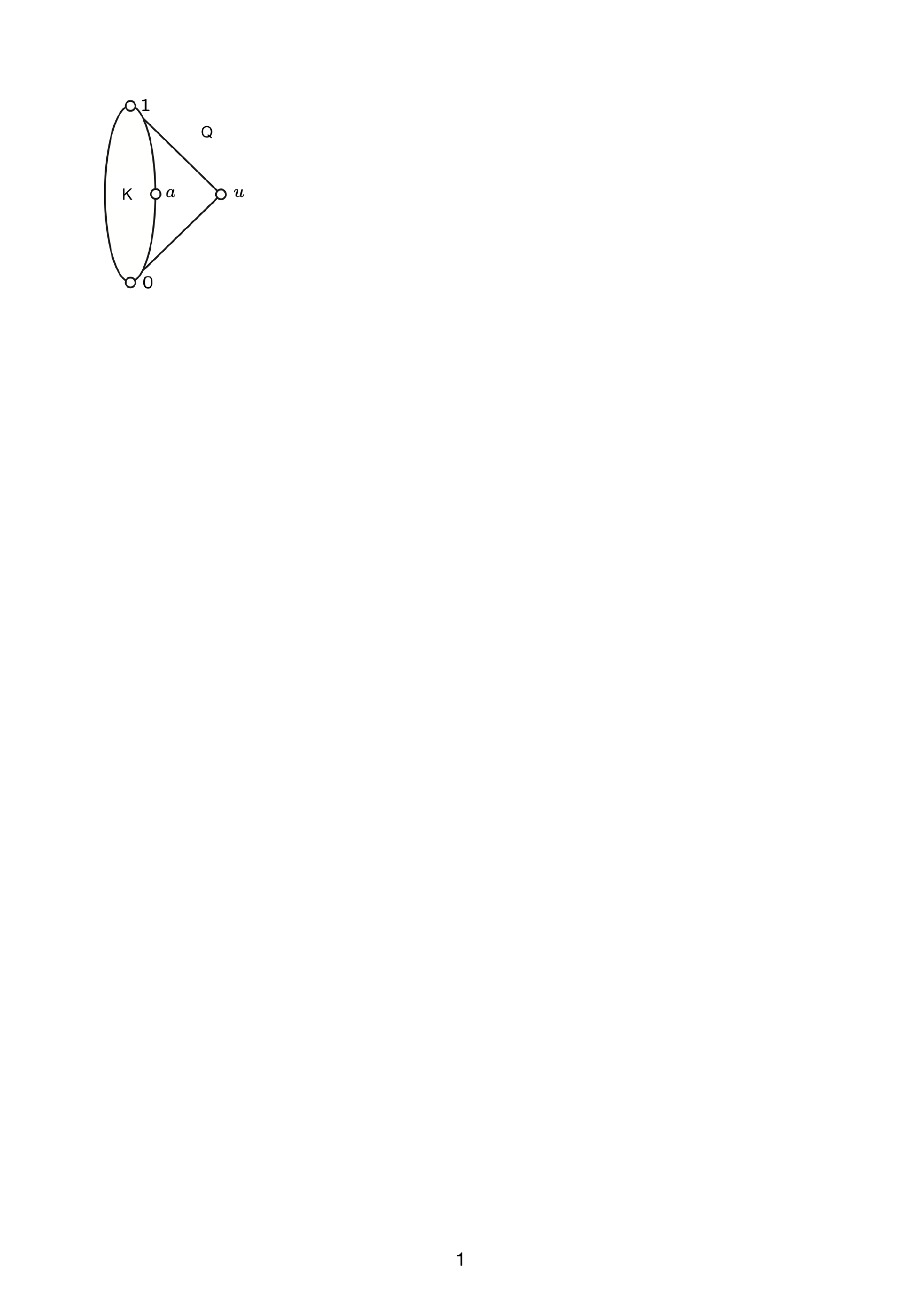} 
	      \vspace{-39\baselineskip}
	   \caption{The partial lattice $Q$}
	   \label{fig: Gratzer1}
	\end{figure}	
	
To construct and describe the lattice $F(Q)$ freely generated by $Q$, Gr\"{a}tzer and Lakser  repeatedly form joins and meets of elements of $Q$, obtaining the polynomials over $Q$, which will represent elements of $F(Q)$. For the polynomials $A$ and $B$ over $Q$, the expression $A \leq B$ denotes the relation forced by the lattice axioms and the structure of $Q$. One observes that given any polynomial $A$, there is a largest element $A_{*}$ of $K$ with $A_{*} \leq A$ and a smallest element $A^*$ of $K$ such that $A^* \ge A$. An easy computation shows that the only complement of $u$ is $a$.

The authors also show that if $K$ is almost uniquely complemented, then the only other complemented pairs in $F(Q)$ are the complemented pairs in $K$. Thus if $a$ does not have a complement in $K$, they get an almost uniquely complemented $\{0,1\}$-extension in which $a$ has a complement. By transfinite induction on the set of noncomplemented elements of $K$, they get an almost uniquely complemented $\{0,1\}$-extension $K_{1}$ of $K_{0} = K$, where each element of $K_{0}$ has a complement. Then, by a countable induction, they get a uniquely complemented $\{0, 1\}$-extension $K_{\omega}$ of $K_{0} = K$.

\subsection{Adam and Sichler theorems}

Already in the 1980s, as we have learned from Sali\u{\i}, Adam and Sichler (see \cite{Ada}) stated, among others, two significant theorems:

\begin{thm}
There is a system $\mathcal{V}$ of $2^{\aleph_{0}}$ varieties of lattices such that, for $V \in \mathcal{V}$, every $L \in V$ is a sublattice of a uniquely complemented lattice $L' \in V$.
\end{thm}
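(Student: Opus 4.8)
The plan is to carry out Dilworth's embedding \emph{inside a prescribed variety}. Recall the one-step construction of Gr\"{a}tzer and Lakser recalled above: to equip a single element $a$ of a bounded lattice $K$ with a complement, one adjoins a new point $u$ with $0 \leq u \leq 1$ and $a \wedge u = 0$, $a \vee u = 1$, forming the partial lattice $Q = K \cup \{u\}$, and passes to the lattice $F(Q)$ freely generated by $Q$; analysis of lattice polynomials over $Q$ then shows that the only complement of $u$ is $a$ and that the only complemented pairs of $F(Q)$ are $\{0,1\}$, $\{a,u\}$, and the complemented pairs already in $K$, so an almost uniquely complemented $K$ yields an almost uniquely complemented $\{0,1\}$-extension; a transfinite induction over the noncomplemented elements followed by a countable induction then produces a uniquely complemented $\{0,1\}$-extension $K_\omega$ of $K$. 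I would run this verbatim but with $F(Q)$ replaced by $F_V(Q)$, the lattice \emph{in a fixed variety} $V$ freely generated by the partial lattice $Q$ (the $V$-reflection of $Q$), and try to recover the three facts: (i) $F_V(Q) \in V$, which is automatic; (ii) the natural map embeds $K$ into $F_V(Q)$; (iii) $u$ has exactly one complement, namely $a$, and no new complemented pair appears. Iterating then yields a uniquely complemented $L' \in V$ with $L \leq L'$.

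\textbf{The combinatorial heart.} Facts (ii) and (iii) classically rest on Whitman's solution of the word problem for free lattices --- in particular on the device that assigns to each polynomial $A$ over $Q$ the largest $A_* \in K$ with $A_* \leq A$ and the least $A^* \in K$ with $A \leq A^*$ --- and no such description is available in an arbitrary variety. The real work is therefore to isolate a class of varieties for which $F_V(Q)$ still has a tractable partial order: I would look among varieties with a manageable word problem, such as joins $\bigvee_{n \in S}\mathcal{V}(B_n)$ of varieties generated by splitting lattices, for which the relatively free product over a partial lattice can be analyzed combinatorially and where one can verify that the collapses forced by the defining identities of $V$ occur only within the ``old'' part $K$ and never equate $u$, or a term involving $u$, with anything producing a spurious complement. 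One candidate for such a $V$ is already the variety defined by the Sali\u{\i} identity quoted earlier; the task is to produce continuum many more.

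\textbf{From one variety to a continuum.} For this I would invoke the known fact that the lattice of all lattice varieties has cardinality $2^{\aleph_0}$ and contains independent families of that size, realized by varieties $W_S = \bigvee_{n \in S}\mathcal{V}(B_n)$ for $S \subseteq \omega$, for an appropriate sequence of splitting lattices $B_n$ chosen \emph{inside} a fixed Dilworth-friendly variety $W$ of the kind isolated above. The key point is that the extra identities distinguishing $W_S$ constrain only the generating lattice $K$ and are respected by the freely adjoined material, so the relative embedding $L \hookrightarrow L' = K_\omega$ of the first paragraph stays inside $W_S$; distinctness of the $W_S$ for distinct $S$ then gives a system $\mathcal{V}$ of $2^{\aleph_0}$ varieties with the asserted property.

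\textbf{The main obstacle.} The hard step is (iii) carried out in a variety: showing that passing from $Q$ to $F_V(Q)$ produces no complemented pair beyond the forced ones, while the homomorphic images needed to impose reflexivity $(a^*)^* = a$ of the complement operator stay in $V$. In the free case this is a delicate lemma about lattice words; in $V$ one must reprove it with far less structural control and simultaneously keep everything inside $V$, and it is exactly this constraint that dictates how restrictive --- and hence which continuum-sized family --- the collection $\mathcal{V}$ can be. I expect the whole argument to hinge on balancing richness (enough varieties for $2^{\aleph_0}$) against tameness (a Whitman-style analysis of $F_V(Q)$ that survives the passage to the variety).
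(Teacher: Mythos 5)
Your write-up is a programme, not a proof: every step that actually carries the mathematical weight is explicitly deferred. Fact (ii) (that $K$ embeds in $F_V(Q)$) and fact (iii) (that $a$ is the only complement of $u$ and no spurious complemented pairs arise) are precisely the Whitman-style content, and you concede that ``no such description is available in an arbitrary variety'' and that ``the real work is to isolate a class of varieties'' where it survives --- but you never isolate one, nor prove the needed lemma for any candidate. The delicacy is not hypothetical: in the variety of modular lattices the whole scheme must break down, since by the Birkhoff--von Neumann theorem a uniquely complemented modular lattice is distributive, so $M_{3}$ embeds in no uniquely complemented member of that variety; hence for some varieties either $K$ fails to embed in $F_V(Q)$ or new complements are forced, and your proposal offers no criterion separating the good varieties from the bad ones, nor any argument that joins of varieties generated by splitting lattices are good. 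Likewise, citing the Sali\u{\i} identity variety begs the question, since (as the paper notes) Sali\u{\i}'s Theorem 7.30 is itself obtained \emph{from} the Adam--Sichler result you are trying to prove. Finally, the jump ``from one variety to a continuum'' assumes without argument that the identities distinguishing the $W_S$ ``constrain only the generating lattice $K$ and are respected by the freely adjoined material''; that is exactly the statement to be proved, and it is the hardest one.

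It is also worth noting that the route actually taken by Adam and Sichler (the source the paper relies on, and sketches in Section 9.4) is different in spirit from relativizing the Gr\"{a}tzer--Lakser free extension. Because Dilworth-type freely generated extensions contain free lattices on countably many generators, they satisfy no nontrivial identity, so one cannot hope to stay inside a proper variety by free generation alone; Adam and Sichler instead adjoin complements by a concrete insertion construction, placing a $\{0,1\}$-lattice $K$ into the interval $[x,y]$ of a fixed lattice $\mathcal{L}$ to form $\mathcal{L}(K)$, and control variety membership through a rigidity lemma (Proposition 9.5 in the paper): any lattice containing the partial lattice $\mathcal{L}_{w}(K)$ as a relative sublattice already contains a copy of $\mathcal{L}(K)$. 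The continuum of varieties is then manufactured from families of such constructions, not from an a priori independent family of splitting-lattice varieties into which one hopes the free machinery can be squeezed. To turn your sketch into a proof you would need either to supply the missing Whitman-type analysis of $F_V(Q)$ for an explicitly exhibited continuum of varieties, or to switch to an insertion-and-rigidity argument of the Adam--Sichler kind.
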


In fact, rather than proving this theorem, the authors proved the following stronger result:

\begin{thm}
There is a system $\mathcal{V}$ of $2^{\aleph_{0}}$ varieties of lattices such that, for $V \in \mathcal{V}$, if $L \in V$ is a $\{0, 1\}$-lattice, each element of which has at most one complement, then $L$ is a $\{0, 1 \}$-sublattice of a uniquely completed $\{0, 1\}$-lattice $L' \in V$.
\end{thm}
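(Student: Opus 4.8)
The plan is to combine the one-step complement-adjoining construction of Gr\"atzer and Lakser described above with a classical source of continuum-many pairwise distinct lattice varieties. Concretely, I would isolate a property of a variety $V$ --- call $V$ \emph{Dilworth-stable} --- which guarantees that the Gr\"atzer--Lakser step, carried out \emph{inside} $V$, again lands in $V$; the embedding statement then holds for every Dilworth-stable $V$ by the usual transfinite iteration, and it only remains to produce $2^{\aleph_{0}}$ distinct Dilworth-stable varieties.

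\textbf{Step 1 (relativizing the construction).} Let $V$ be a lattice variety and $L\in V$ a $\{0,1\}$-lattice in which each element has at most one complement; fix $a\in L\setminus\{0,1\}$ with no complement in $L$. Form the partial lattice $Q=L\cup\{u\}$ exactly as above: $0\le u\le 1$, meets and joins of comparable pairs inherited from $L$, and $a\wedge u=0$, $a\vee u=1$. In place of the absolutely free lattice on $Q$, take $L^{+}=F_{V}(Q)$, the lattice freely generated by the partial lattice $Q$ \emph{within} $V$, i.e.\ the quotient of the $V$-free lattice on $|Q|$ generators by the meet/join equalities holding in $Q$. One must then re-prove, now modulo the identities of $V$, the three facts Gr\"atzer and Lakser establish absolutely: (i) the canonical map $L\to L^{+}$ is a $\{0,1\}$-embedding; (ii) $a$ is the only complement of $u$ in $L^{+}$; (iii) $L^{+}$ is again almost uniquely complemented, so that no complemented pair appears beyond those of $L$ and the forced pair $(u,a)$. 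As in their argument this runs through a Whitman-type normal form for the polynomials over $Q$ together with the two bounding maps $A\mapsto A_{*}$, $A\mapsto A^{*}$ into $L$, but now executed relative to the word problem of $F_{V}(Q)$ rather than of the absolutely free lattice. Declare $V$ \emph{Dilworth-stable} when (i)--(iii) hold for every such $L$ and $a$; for such $V$, the lattice $L^{+}\in V$ is an almost uniquely complemented $\{0,1\}$-extension of $L$ in which $a$ has gained a complement.

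\textbf{Steps 2 and 3 (iteration, and a continuum of varieties).} If $V$ is Dilworth-stable, iterate Step 1. A transfinite induction over a well-ordering of the noncomplemented elements of $K_{0}=L$ produces an almost uniquely complemented $\{0,1\}$-extension $K_{1}\in V$ in which every element of $K_{0}$ has a complement; an $\omega$-length iteration with union then gives a uniquely complemented $K_{\omega}\in V$ containing $L$ as a $\{0,1\}$-sublattice. This is the Gr\"atzer--Lakser bookkeeping verbatim, and it stays in $V$ because a variety is closed under the directed unions involved. To build $\mathcal V$, fix a sequence $(S_{n})_{n\ge 1}$ of finite subdirectly irreducible \emph{splitting} lattices that is \emph{independent}, meaning $S_{n}\notin \mathrm{HSP}\{S_{m}:m\ne n\}$ for every $n$; for $T\subseteq\mathbb N$ set $V_{T}=\mathrm{HSP}\{S_{n}:n\in T\}$. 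Independence forces $S_{n}\in V_{T}\iff n\in T$ (for finite subdirectly irreducible lattices this is the standard consequence of J\'onsson's lemma), so the $V_{T}$ are pairwise distinct and there are $2^{\aleph_{0}}$ of them. The substantive requirement is to choose the $S_{n}$ so that every $V_{T}$ is Dilworth-stable: the identities cutting out $V_{T}$ --- the splitting identities $\epsilon_{S_{n}}$ for $n\notin T$ --- must have a syntactic shape robust under freely adjoining the single relation $a\wedge u=0,\ a\vee u=1$, so that $F_{V_{T}}(Q)$ cannot contain a copy of $S_{n}$ unless $L$ already did. Granting this, $\mathcal V=\{V_{T}:T\subseteq\mathbb N\}$ is the desired system, with the stated embedding property supplied by Steps 1 and 2.

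\textbf{Main obstacle.} The hard part is the relative uniqueness assertion, parts (ii)--(iii) of Step 1: one has to show that imposing the extra identities of $V_{T}$ on the free lattice over $Q$ creates neither a new complement of $u$ nor any new complemented pair, and it is exactly here that the special choice of the splitting lattices $S_{n}$ must be exploited, through the relativized Whitman calculus. Everything else --- the transfinite and $\omega$-length iterations, the directed unions, the counting of the varieties --- is routine once that polynomial analysis is in hand.
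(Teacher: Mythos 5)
You should first note that the paper itself contains no proof of this theorem: it is quoted from Adam and Sichler \cite{Ada}, and what the paper reproduces of their method is only the gadget construction (the lattice $\mathcal{L}(K)$ obtained by inserting a $\{0,1\}$-lattice $K$ into an interval, the partial lattice $\mathcal{L}_{w}(K)$, and the lemma that a lattice containing $\mathcal{L}_{w}(K)$ as a relative sublattice generates a copy of $\mathcal{L}(K)$). That machinery, which is how Adam and Sichler control which varieties their construction stays inside, does not appear in your sketch; your plan of taking $\mathrm{HSP}$-closures of an independent family of splitting lattices is a genuinely different route from the one indicated in the paper.

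Judged on its own terms, however, your proposal has a genuine gap: everything that actually constitutes the theorem is deferred to hypotheses you do not verify. ``Dilworth-stability'' of $V_{T}$ --- i.e.\ points (i)--(iii) for the relative free lattice $F_{V_{T}}(Q)$ --- is not a routine relativization of the Gr\"atzer--Lakser analysis, and it cannot hold for arbitrary varieties: in the modular variety, for instance, uniquely complemented implies distributive (the von Neumann result quoted as Theorem 7.4), so a nondistributive almost uniquely complemented modular $\{0,1\}$-lattice admits no such embedding at all; even point (i), injectivity of $L\to F_{V}(Q)$, already presupposes that some member of $V$ extends $L$ and complements $a$, which is part of what must be proved rather than a formal consequence of freeness over a partial lattice. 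Likewise, the existence of an independent sequence $(S_{n})$ of splitting lattices whose splitting identities are ``robust under freely adjoining the relations $a\wedge u=0$, $a\vee u=1$'' is asserted, not exhibited, and no criterion is given by which such robustness could be checked; the phrase ``Granting this'' in Step 2/3 concedes exactly the mathematical content of the statement. What remains after removing these unproved ingredients (the transfinite iteration, directed unions, and the cardinality count via J\'onsson's lemma) is indeed routine, so the proposal is a plausible strategy outline but not a proof.
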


But what interests us here is how they approached the construction of a Dilworth lattice. This is done as follows:

From a lattice $L$ (see Fig. 10, left), they extracted what they call a "partial lattice" $L_{w}$ (see Fig. 10, right) by excluding the least element and the greatest element of $L$.

\begin{figure}[h] 
\vspace{-1\baselineskip}  
 \hspace{5\baselineskip}	
    	   \includegraphics[width=7in]{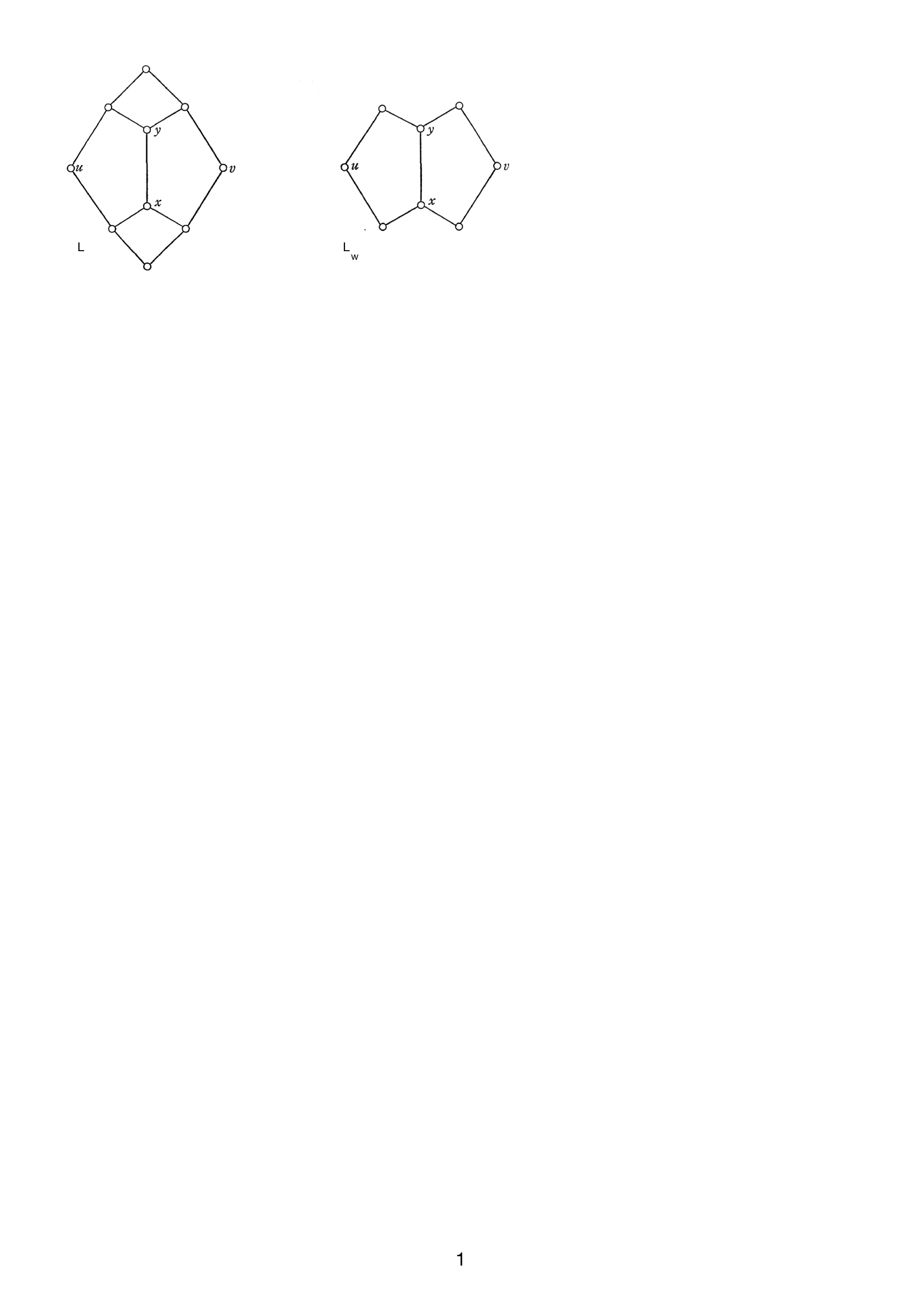} 
	      \vspace{-39\baselineskip}
	   \caption{Finite lattice $L$ and the associated partial lattice $L_{w}$}
	   \label{fig: Adam11}
	\end{figure}

A bounded lattice is a lattice with a least element 0 and a greatest element 1. A bounded lattice in which the zero and unit elements are considered as distinguished elements is called, as previously, a $\{0,1\}$-lattice.

Now, for an arbitrary $\{0,1\}$-lattice $K,\ \mathcal{L}(K)$ will denote the lattice obtained by inserting the lattice $K$ in the interval [$x,y$] of $\mathcal{L}$ (see Fig. 11).

Formally, let $\leq$ be a relation defined on $\mathcal{L} \cup (K \setminus \{0, 1\})$ by the following:

(i) For $a,b \in \mathcal{L}, a \leq b$ if and only if $a \leq b \in \mathcal{L}$;

(ii) For $a, b, \in K\setminus \{0,1\},a \leq b$ if and only if $ a \leq b \in K$;

(iii) For $a \in K\setminus \{0, 1\}, x < a < y$.

$\mathcal{L}(K)$ is the lattice whose order relation is the transitive closure of $\leq$. Let $\mathcal{L}_{w}(K)$ denote the partial lattice obtained from the lattice $\mathcal{L}(K)$ by excluding its least element and its greatest element (see Fig. 11). Observe that, in the notation of Adam and Sichler, $\mathcal{L}$ and $\mathcal{L}_{w}$ are $\mathcal{L}(2)$
and $\mathcal{L}_{w}(2)$, respectively, where 2 denotes the two-element chain.

\begin{figure}[h] 
\vspace{-1\baselineskip}  
 \hspace{2\baselineskip}	
    	   \includegraphics[width=8in]{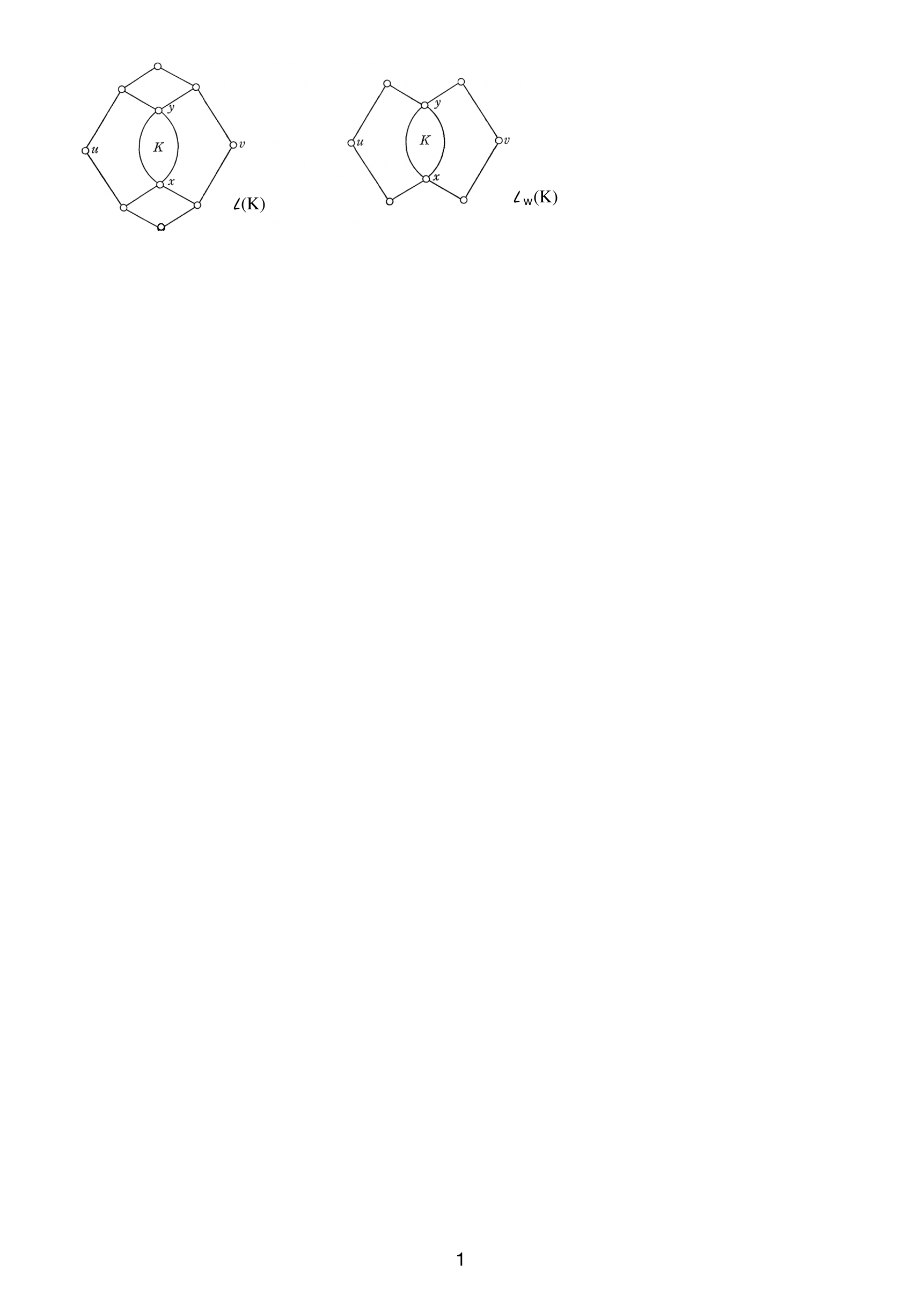} 
	      \vspace{-47\baselineskip}
	   \caption{The lattice $\mathcal{L}(K)$ and the partial lattice $\mathcal{L}_{w}(K)$ obtained from it}
	   \label{fig: Adam22}
	\end{figure}

Then, the authors proved the following lemma:

\begin{prop}
 If, for any $\{0, 1\}$-lattice $K,\ \mathcal{L}_{w}(K)$  is a relative sublattice of a lattice $L$ then the sublattice of $L$ generated by $\mathcal{L}_{w}(K)$
is isomorphic to $\mathcal{L}(K)$.
\end{prop}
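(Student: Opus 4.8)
The plan is to show that the sublattice $S$ generated by $\mathcal{L}_w(K)$ inside $L$ is obtained from the partial lattice $\mathcal{L}_w(K)$ by restoring exactly two elements --- a bottom $\hat 0$ and a top $\hat 1$ --- so that the resulting poset is order-isomorphic to $\mathcal{L}(K)$. Since $\mathcal{L}(K)$ is itself just $\mathcal{L}_w(K)$ with its original bounds $0$ and $1$ put back (no comparability inside $\mathcal{L}_w(K)$ being altered), an order isomorphism extending the identity on $\mathcal{L}_w(K)$ suffices, and between posets that are lattices such a map is automatically a lattice isomorphism. Concretely I would: (i) locate $\hat 0,\hat 1\in S$ (forced by the shape of the fixed finite lattice $\mathcal{L}$ of Figure 10 --- e.g. $\hat 1$ as the join in $L$ of a suitable pair of elements of $\mathcal{L}_w$ whose join in $\mathcal{L}$ is $1$, and dually for $\hat 0$); (ii) prove $\hat 0\le z\le\hat 1$ for all $z\in\mathcal{L}_w(K)$; (iii) prove $\mathcal{L}_w(K)\cup\{\hat 0,\hat 1\}$ is closed under $\vee_L$ and $\wedge_L$, whence it equals $S$; (iv) assemble the isomorphism.

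For step (ii), elements $z$ in the finite ``$\mathcal{L}$-part'' of $\mathcal{L}_w(K)$ are handled by a finite inspection of $\mathcal{L}$, together with the comparabilities and the finitely many meet/join identities that already hold in $\mathcal{L}_w$ and therefore, by the relative-sublattice hypothesis, persist in $L$. An element $z$ lying strictly inside the inserted copy of $K$ satisfies $x\le z\le y$ in $\mathcal{L}_w(K)$, where $[x,y]$ is the insertion interval, so $\hat 0\le x\le z\le y\le\hat 1$ reduces this to the previous case; in particular $\hat 0$ and $\hat 1$ will be the least and greatest elements of $S$ once step (iii) is done.

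Step (iii) is the core. I would split into cases according to the ``region'' of the two operands: both inside the inserted $K$; both in the $\mathcal{L}$-part; one of each kind; or at least one operand equal to $\hat 0$ or $\hat 1$ (immediate, these being the bounds of $S$). For two operands in $K$, their $K$-join and $K$-meet are \emph{defined} operations of the partial lattice $\mathcal{L}_w(K)$ and hence computed correctly in $L$; what has to be ruled out is the result \emph{escaping} from $K$ into a new element, which I would do by showing that any element of $S$ sandwiched between the $K$-join (resp.\ $K$-meet) and $\hat 1$ (resp.\ $\hat 0$) already lies in $K$, using the relations inherited from $\mathcal{L}$. The delicate case is an $\mathcal{L}$-side element met or joined with an \emph{arbitrary} $k\in K$: one must prove it equals $\hat 0$ or $\hat 1$ for every $k$, which uses that the corresponding element of $\mathcal{L}$ meets $y$ to $0$ and joins $x$ to $1$, combined with $x\le k\le y$, plus a Whitman-style normalization showing no intermediate value can arise.

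The main obstacle I anticipate is exactly this no-escape phenomenon in step (iii): proving that every finite lattice term over $\mathcal{L}_w(K)$ evaluates in $L$ to an element of $\mathcal{L}_w(K)$ or to one of $\hat 0,\hat 1$. This is a term-normalization argument in the spirit of Whitman's solution of the word problem for free lattices; it leans on the relative-sublattice hypothesis (which faithfully transports the defined partial operations of $\mathcal{L}_w(K)$ into $L$) and, crucially, on the special combinatorics of the finite lattice $\mathcal{L}$ --- it is built precisely so that the only partial operations missing from $\mathcal{L}_w$ are those forced to resolve to the two bounds. Once (iii) is in hand, $S=\mathcal{L}_w(K)\cup\{\hat 0,\hat 1\}$ as a poset with the order of $\mathcal{L}_w(K)$ unchanged and $\hat 0<\cdots<\hat 1$, so the bijection fixing $\mathcal{L}_w(K)$ and sending $0\mapsto\hat 0$, $1\mapsto\hat 1$ is the required isomorphism $\mathcal{L}(K)\cong S$.
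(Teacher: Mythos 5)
Your outline follows the only plausible strategy (and, as far as one can tell, the spirit of Adam--Sichler's argument, since this paper itself states the lemma without proof, merely citing \cite{Ada}): show that the sublattice $S$ generated by $\mathcal{L}_{w}(K)$ consists of $\mathcal{L}_{w}(K)$ together with exactly two new elements $\hat 0$, $\hat 1$ acting as bounds, then transport the identity to an isomorphism with $\mathcal{L}(K)$. But the proposal is a plan rather than a proof: the entire mathematical content of the lemma sits in the steps you defer. Concretely, you must show (a) that all pairs of $\mathcal{L}_{w}(K)$ whose join is undefined in the partial lattice (i.e.\ equals the deleted top of $\mathcal{L}(K)$) receive one and the same join $\hat 1$ in $L$, (b) that this single element lies above every element of $\mathcal{L}_{w}(K)$, and the dual statements for $\hat 0$; only then does closure under $\vee_{L}$, $\wedge_{L}$ follow and the order type come out right. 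Your text names this as ``the main obstacle I anticipate'' and gestures at ``a Whitman-style normalization'' and ``the special combinatorics of the finite lattice $\mathcal{L}$'', but never carries it out. One helpful observation you could have extracted from the hypothesis is that the relative-sublattice condition already forbids an undefined join from landing \emph{inside} $\mathcal{L}_{w}(K)$ (otherwise that join would have to be defined in the partial lattice), so the real work is the uniqueness and dominance of the new element, which is a finite forcing computation through mediating defined joins in the fixed lattice $\mathcal{L}$.

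This gap is not a routine verification that can be waved through, because the statement is simply false for a generic bounded finite lattice in place of the Adam--Sichler lattice $\mathcal{L}$ of Fig.~10: if $\mathcal{L}$ were, say, the four-element chain $0<x<y<1$, then $\mathcal{L}_{w}(K)$ is already a lattice (a copy of $K$), it can sit as a relative sublattice of many lattices, and the sublattice it generates is just itself, not $\mathcal{L}(K)$. So any correct proof must use the particular shape of $\mathcal{L}$ --- enough pairs meeting to $0$ and joining to $1$, arranged so that the forcing in (a) and (b) goes through --- and that is precisely the part absent from your argument (and also, to be fair, from the paper under review, which reports the lemma without proof). A secondary slip: in your ``delicate case'' of an $\mathcal{L}$-side element $b$ joined with an arbitrary $k\in K$, the result is in general $b\vee_{\mathcal{L}}y$ (an interior element of $\mathcal{L}$), not necessarily $\hat 1$; only when that join is the deleted top does the uniqueness problem arise.
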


Adam and Sichler's approach, though different from Gr\"{a}tzer's one, leads to a kind of Dilworth Lattice.

\section{Conclusion}

We may therefore conclude with a few observations. First, looking for a concrete realization of the Dilworth lattice, we have not found, inside mathematics, a construction of a nondistributive uniquely complemented lattice that is not freely generated. It seems that there are no natural examples of a nondistributive uniquely complemented lattice in topology, or in some other mathematical domain. As for possible physical instantiations, they are also lacking. From gluons to galaxies (see \cite{Cre}), from lattice vacuum (see \cite{Mit}) to a lattice universe (see \cite{Kir}; \cite{Duf}),  there is a lot of lattice-based structures in physics. However, the lattices studied within the framework of nonlinear lattice theory (see \cite{Dix}) do not fit the problem. In classical mechanics they reduce to one dimensional lattices - in fact, chains - of particles with nearest neighbor interactions (see \cite{Tod}, 1). In quantum physics, the use of lattices is often limited to $\mathbb{Z}_{2}$ (see \cite{Naa}) and the modeling of lattice gas in chemistry does not go further (see \cite{Sim}). Since all lattice-like biological or engineering structures (see \cite{Pum}) are finite, one cannot expect any information on the subject from the life sciences either. In conclusion, the UCN lattice of Dilworth remains an enigma. A generalization of the problem to partially ordered sets was carried out by Waphare and Joshi (see \cite{Wap}), and more recently by Chajda et al. (see \cite{Cha}), without casting any particular light on the matter, and above all, without finding an object to associate with the poset in question. We only get negative answers to the three problems posed in \cite{Gra2}.  We are in front of a true mathematical mystery.

We return now to the non-mathematical problem from which we started: if we want to define an object $O$ whose properties $P \subset E$ are not accessible to us, from the properties $E-P$ of an object $M$ that we actually know, we are not required to assume the existence of a Boolean structure on $E$. If $E$ is not finite, it suffices to assume that $E$ is a nondistributive uniquely complemented lattice.

 If the world (both known and unknown) conforms to the structure of this space $E$, then we can approximate the properties of the unknown from the properties of the known, inside this mysterious non Boolean Dilworth structure.

\end{document}